\def\pmatrix{\left(\begin{matrix}}
\def\endpmatrix{\end{matrix}\right)}
\def\same{{\underline{\hskip1.5cm}}}
\def\R{{\mathbb R}}
\def\P{{\mathbb P}}
\def\Z{{\mathbb Z}}
\def\C{{\mathbb C}}
\def\Q{{\mathbb Q}}
\def\H{{\mathcal H}}
\def\E{{\mathbb E}}
\def\p{\partial}
\def\t{\theta}
\def\T{\Theta}
\def\e{\varepsilon}
\def\de{\delta}
\def\ll{\lambda}
\def\A{{\mathcal A}}
\def\M{{\mathcal M}}
\def\X{{\mathcal X}}
\def\F{{\mathcal F}}
\def\Pic{\operatorname{Pic}}
\def\Sing{\operatorname{Sing}}
\def\Sp{{\operatorname{Sp}(2g,\Z)}}
\def\tt#1#2{{\t\left[\begin{matrix}{#1}\\{#2}\end{matrix}\right]}}
\def\AP{{\overline{\A_g}^P}}
\def\AV{{\overline{\A_g}^V}}
\newtheorem{OP}{Open Problem}
\theoremstyle{plain}
\newtheorem{thm}{Theorem}[section]
\newtheorem{prop}[thm]{Proposition}
\newtheorem{conj}[thm]{Conjecture}
\newtheorem{cor}[thm]{Corollary}
\theoremstyle{definition}
\newtheorem{df}[thm]{Definition}
\newtheorem{rem}[thm]{Remark}
\newtheorem{dsc}[thm]{}
\begin{document}
\title{Geometry of $\A_g$ and Its Compactifications}
\author{Samuel Grushevsky}
\address{Mathematics Department, Princeton University, Fine Hall,
Washington Road, Princeton, NJ 08544, USA}
\email{sam@math.princeton.edu}
\thanks{Research is supported in part by National Science Foundation under the grant DMS-05-55867.}
\begin{abstract}
In this survey we give a brief introduction to, and review the
progress made in the last decade in understanding the geometry of
the moduli spaces $\A_g$ of principally polarized abelian varieties
and its compactifications, concentrating on results obtained over $\C$.

This is an expanded and updated version of the talk given at the 2005
Summer Institute for Algebraic Geometry.
\end{abstract}

\maketitle
\tableofcontents
\section{Introduction}
In this survey we review the progress made in the last decade, the
current state of knowledge, and the open problems and possible
directions in the study of the geometry of the moduli spaces of
principally polarized abelian varieties and their compactifications,
primarily over the field of complex numbers.

We discuss the results on the geometric interpretation and
construction of compactifications; the study of the birational
geometry of $\A_g$, including nef and effective cones, and the
canonical model; the work on homology and Chow rings of $\A_g$;
constructions of special loci within $\A_g$ by using the geometry of
the theta divisor. Since the moduli space of curves $\M_g$ is
perhaps the best-studied moduli space, and is naturally a subvariety
of $\A_g$ via the Torelli map, we also draw analogies with the study
of $\M_g$ when appropriate. We mostly give references to the
original papers instead of complete proofs, but try to explain the
motivation for study, and some ideas leading to the proofs.

In this survey we focus primarily on the geometry of $\A_g$ rather
than that of individual abelian varieties, or of loci in $\A_g$
arising from special geometric constructions. In particular we do not
cover the exciting recent developments in understanding the geometry
of linear systems on one abelian variety (surveyed, for example, in
\cite{papo}). The more modular-theoretic aspects of the theory,
including a detailed study of subgroups of $\Sp$ and the associated
moduli spaces, are also not covered. Neither do we survey the
extensive literature on the problems of characterizing Jacobians of
Riemann surfaces within $\A_g$ (known as the Schottky problem),
including Krichever's recent proof \cite{trisecant} of Welters'
trisecant conjecture, of characterizing Prym varieties ---
characterized by the existence of a pair of quadrisecant planes in
\cite{grkr}, intermediate Jacobians of cubic threefolds ---
characterized by the existence of a triple point on the theta divisor \cite{cmfr},\cite{cm2}, etc. The history of the first two of these characterization problems is surveyed, for example, in \cite{taimanov}, from a more analytic viewpoint.

An earlier introduction and survey, with much more details on the cycles
on $\A_g$ and characteristic $p$, is \cite{vdgoo}. A survey giving
more details on the work on birational geometry of $\A_g$, including
the study of the non-principal polarizations, is \cite{husa1}. The
study of complex tori that are not necessarily algebraic is also
surveyed in \cite{debarrebook}. The book \cite{bila} contains a
wealth of information about complex abelian varieties, special loci,
theta functions, and moduli. The survey \cite{vdgsurvey} is focused
more on the theory of Siegel modular forms and related questions in
number theory.

{\bf Acknowledgements.} Those of the results surveyed in which I
participated have been obtained in collaboration with Cord
Erdenberger, Klaus Hulek, David Lehavi, and Riccardo Salvati Manni,
to all of whom I am grateful for the chance to investigate the
subject together. I am indebted to Klaus Hulek for detailed
discussions on the geometry of compactifications, and to Riccardo
Salvati Manni for discussions on the intricacies of the rings of
theta constants, and especially for explaining how Tai's technique
could be improved to yield small slopes (theorem \ref{minslope}). I
would like to thank Izzet Coskun, Gerard van der Geer, Klaus Hulek,
Nicholas Shepherd-Barron, and especially Riccardo Salvati Manni for reading drafts of this text very carefully, and for many useful suggestions and advice on content and presentation.


\section{Notations}
We start by defining the object of our discussion --- the moduli
space of principally polarized abelian varieties. Throughout the
text we will work over the base field $\C$, though many of the
results, especially the purely algebraic ones, carry over to
arbitrary base field. We make a few comments about the situation in positive characteristic in section \ref{charp}.

\begin{df}
Algebraically, an  {\it abelian variety} is  a projective
algebraic variety $A$, with the structure of an abelian group
on the set of its points, such that the group operations are morphisms $+:A\times A\to A$ and $-1:A\to A$.

A {\it polarization} on an abelian variety is the first Chern class of an ample line bundle $L$ on $A$, i.e. the polarization is $[L]:=c_1(L)\in H^2(A,\Z)\cap H^{1,1}(A,\C)$.
A polarization $[L]$ on an abelian variety $A$ is called {\it
principal} if its space of sections is one-dimensional, i.e.~if $h^0(A,L)=1$.
By abuse of notation, we will often think of any non-zero section of $L$ as the polarization, and
write $L$ instead of $[L]$.
\end{df}

\begin{df}
We denote by $\A_g$ the moduli space of principally polarized
abelian varieties (or ppavs for short) of dimension $g$, up to
isomorphisms preserving the principal polarization.
\end{df}

\begin{rem}
The moduli ``space'' $\A_g$, and its compactifications, to be defined below, are properly
to be thought of as stacks. However, for many considerations
thinking of $\A_g$ na\"\i vely as if it were a variety, or, more
carefully, an orbifold, suffices. To formally justify some of the work done on
$\A_g$ one needs to either work properly with a stack, or use the
fact that $\A_g$ admits finite covers (see below) that are actually
manifolds; often the stackiness does not present a problem. Note,
however, that any abelian variety has an involution $x\mapsto -x$,
and thus a general point of $\A_g$ in fact parametrizes an object
with an automorphism, so should be counted with multiplicity 1/2
as a stacky point.
\end{rem}

\begin{df}
We would now like to say that there exists a {\it universal family of
principally polarized abelian varieties} $\pi:\X_g\to\A_g$, with the
fiber over the point $[A]\in\A_g$ being the
variety $A$ itself. The existence of the universal family, even as a
stack, is not a trivial fact, and has to be proven and discussed in
more detail. However, for the base field being $\C$, the universal
family can be constructed as an explicit quotient (see below), and
thus we will be able to think of it very explicitly. Note that the
$\pm1$ involution is no longer trivial on $\X_g$, and thus a generic
point of $\X_g$ is in fact smooth. See \cite{fach} for a complete
discussion of the moduli stack and the universal family and of the
compactifications.
\end{df}

\begin{df}
There is a very important {\it Hodge vector bundle}
$\E:=\pi_*(\Omega^1_{\X_g/\A_g})$ on $\A_g$. This is just to say that
the fiber of the Hodge vector bundle over a point $[A]\in\A_g$ is the
$g$-dimensional space of holomorphic 1-forms on $A$. We denote by
$L:=\det \E$ the corresponding determinant Hodge line bundle.
\end{df}

$\A_g$ can be thought of algebraically, over any field. Let us now
give the analytic picture of it over $\C$. If the base field is
$\C$, the universal cover of any abelian variety is $\C^g$, and $A$
is given as a quotient of $\C^g$ by some action of $\pi_1(A)$. This
is to say that an abelian variety is a quotient of $\C^g$ by the
translations by elements of a full-rank lattice $\Lambda$, where by
a lattice we mean a subgroup of $\C^g$ (under addition) isomorphic
to $\Z^{2g}$, and a lattice is said to be of full rank if
$\Lambda\otimes_\Z\R=\C^g$. If we act on $\Lambda\subset\C^g$ by an element of $GL(g,\C)$, the quotient is going to be biholomorphic to the original one. Thus, up to biholomorphisms, any abelian variety is a quotient of $\C^g$ by a lattice the first $g$ generators of which are the unit vectors in all the directions. It turns out that (this is known as Riemann's bilinear relations) that for the quotient $\C^g/\Lambda$ to be a projective variety the other $g$ vectors must constitute a $g\times g$ matrix $\tau$ with a
positive-definite imaginary part. Such a complex matrix is called a {\it period matrix}.
\begin{df}
We denote by $\H_g$ the {\it Siegel upper half-space} --- the set of
all period matrices --- and for a period matrix $\tau\in\H_g$ denote
by $A_\tau:=\C^g/(\Z^g+\tau\Z^g)$ the corresponding abelian variety.
Notice that $\H_g$ is contractible.
\end{df}
Given a point $\tau\in\H_g$, there is a canonical choice of the
principal polarization on $A_\tau$.
\begin{df}
We define the {\it theta function} to be the holomorphic function of
$\tau\in\H_g$ and $z\in\C^g$, given by the following formula:
$$
  \t(\tau,z):=\sum\limits_{n\in\Z^g}\exp(\pi i(n^t\tau n+2n^tz)).
$$
The theta function is even in $z$, and automorphic in $z$ with respect to the lattice $\Z^g+\tau\Z^g$: for any $n,m\in\Z^g$ we have the transformation law
$$
  \t(\tau,z+\tau n+m)=\exp(-\pi i n^t\tau n-2\pi i n^t z) \t(\tau,z).
$$

Thus for a fixed $\tau$ the zero locus in $\C^g$ of the theta function, as a function in $z$, descends to a subvariety of $A_\tau$, which is called the {\it theta divisor $\T_\tau$}. This divisor then gives a principal
polarization on $A_\tau$, for which the theta function generates the space of sections.

\smallskip
The theta function satisfies the very important {\it heat equation}
$$
  \frac{\partial\t(\tau,z)}{\partial\tau_{jk}}=2\pi i(1+\de_{j,k})
  \frac{\partial^2\t(\tau,z)}{\partial z_j\partial z_k}.
$$
where $\de_{j,k}$ is the Kronecker symbol.
\end{df}
The map $\tau\mapsto A_\tau$ exhibits $\H_g$
as the universal cover of $\A_g$, and it is natural to ask what is the deck group of this
cover, i.e.~if the ppav $(A_\tau,\T_\tau)$ is isomorphic to
$(A_{\tau'},\T_{\tau'})$, how are $\tau$ and $\tau'$ related?

\begin{df}
It turns out that there is an action of ${\operatorname{Sp}(2g,\R)}$
on $\H_g$. If we think of ${\operatorname{Sp}(2g,\R)}$ as the group
of $2g\times 2g$ matrices written in the form of four $g\times g$
blocks such that the symplectic condition is
$$
  \pmatrix A&B\\ C&D\endpmatrix\pmatrix 0&1\\ -1&0\endpmatrix
  \pmatrix A&B\\ C&D\endpmatrix^t=\pmatrix 0&1\\ -1&0\endpmatrix,
$$
then the action is given by
$$
 \pmatrix A&B\\ C&D\endpmatrix\circ\tau:=(A\tau+B)(C\tau+D)^{-1}.
$$
A general element of ${\operatorname{Sp}(2g,\R)}$ does not map ppavs to
isomorphic ppavs; however, $\Sp$ does: if $\tau'=\gamma\circ\tau$ for some $\gamma\in \Sp$, then the ppav
$A_\tau$ is isomorphic to $A_{\tau'}$ (the map is $z\mapsto
(C\tau+D)z$), and it turns out that this is the only way $A_\tau$
and $A_{\tau'}$ can be isomorphic as ppavs, i.e.~that
$\A_g=\H_g/\Sp$.

We observe that $\dim\H_g=\dim\A_g=\frac{g(g+1)}{2}$. The universal
family $\X_g$ is then the quotient of $\H_g\times\C^g$ by the
semidirect product action of $\Sp\ltimes\Z^{2g}$ (where $\Z^{2g}$ acts on $\C^g$ by adding a lattice vector), and the fiber of the
Hodge bundle over $\tau$ is $\E|_\tau=H^{1,0}(A_\tau)=\C d z_1\oplus\ldots\oplus\C d z_g$. Notice that $\E$ lifts to a trivial vector bundle on $\H_g$, but it is not trivial on the quotient $\A_g$.
\end{df}
\begin{rem}
To be able to talk of $\A_g$ and $\X_g$ constructed as quotients of $\H_g$ and $\H_g\times\C^g$, respectively, as moduli spaces or fine moduli stacks, one needs to verify that the stabilizer of any point in $\A_g$ under the action of $\Sp$ (respectively, of any point in $\X_g$ under $\Sp \ltimes\Z^{2g}$) is finite. To show that this is the case for $\A_g$, note that any automorphism of an abelian variety can be lifted to a holomorphic map $\C^g\to\C^g$ of the universal covers fixing 0, which is of linear growth and thus linear. Then such a linear map must map the lattice to itself, and have determinant one (to be an isomorphism, and not finite-to-one), and then there can only be finitely many such maps. The proof for $\X_g$ is similar.
\end{rem}

\section{Modular forms and projective embeddings of $\A_g$}
The moduli space $\A_g$ is not compact. There are various
compactifications that one can define by studying what happens in
degenerating families of ppavs, and we devote the next section to
discussing these. Another approach to compactifying an algebraic
variety, however, is to construct an explicit embedding of it into a
projective space, and then compactify the image. For $\A_g$ this is
done by considering Siegel modular forms, which can be also thought of as functions on $\H_g$ with certain automorphy properties, or as some representations of $\Sp$, or as sections of certain bundles on $\A_g$. The study of
modular forms is a vast subject, of which we barely touch the tip
here --- it is exposed, for example, in the books \cite{igusabook},
\cite{freitagbook}. A comprehensive recent survey of Siegel modular
forms and of the questions arising already in dimension 2 is
\cite{vdgsurvey}.

\smallskip
Perhaps the simplest way to embed a variety into a projective space
is by sections of a very ample line bundle. Luckily, the Hodge
line bundle $L$ is actually ample on $\A_g$, though not very ample,
but for full generality it pays to consider the more general
situation.

In general any vector bundle $V$ on a variety $X$ can be lifted to its universal cover $\widetilde X$. If $\Pic(\widetilde X)=0$, then a section of $V$ lifts to a global vector-valued function on $\widetilde X$, which transforms appropriately under the action of $\pi_1(X)$ on $\widetilde X$. This is the concept of automorphic forms: studying sections of bundles on a variety as functions on the universal cover, subject to certain transformation rules.

\begin{df}
Given a subgroup $\Gamma\subset \Sp$ and a rational representation
$\rho:GL(g,\C) \to GL(W)$ for some vector space $W$, a
{\it $\rho$-valued modular form} is a holomorphic map $F:\H_g\to W$ such that
$$
 F(\gamma\circ\tau)=\rho(C\tau+D)\circ F(\tau)\qquad
 \forall\gamma=\pmatrix A&B\\ C&D\endpmatrix\in\Gamma,\ \forall\tau\in\H_g
$$
(where, as always, we write $\gamma$ as four $g\times g$ blocks), such that moreover for $g=1$ we require $F$ to be regular at the cusps of $\H_1/\Gamma$.

If $W=\C$, and the representation is $\rho(\gamma)=\det(C\tau+D)^k$,
then the modular form is called {\it a (scalar) weight $k$ modular
form for $\Gamma$.} It can be shown, by writing down the
transformation law for holomorphic 1-forms on $A_\tau$ under the
action of $\Sp$ on $\H_g$, that the Hodge vector bundle $\E$
is in fact the bundle of modular forms for the standard (identity) representation, and thus $L$ is the bundle of (scalar) modular forms of weight 1.
\end{df}
It is hard to construct scalar modular forms of small weight for the
entire group $\Sp$. However, one can use the theta function to
construct modular forms for subgroups.
\begin{df}
For any $m\ge 2$ and any $\e,\de\in(\frac{1}{m}\Z/\Z)^g$ the {\it
level $m$ theta function with characteristics $[\e,\de]$} is defined
as
$$
\begin{matrix}
  \tt{\e}{\de}(\tau,z):=&\sum\limits_{n\in\Z^g}\exp(\pi i ((n+\e)^t
  \tau (n+\e)+2(n+\e)^t(z+\de)))\\
  &=\exp(\pi i(\e^t\tau\e+2\e^t(z+\de)))\t(\tau,z+\tau\e+\de).
\end{matrix}
$$
As a function of $z$, the level $m$ theta function is a section of
the theta bundle translated by the corresponding point of order $m$,
and thus $\tt\e\de(\tau,z)^m$ is a section of the
bundle $m\T_\tau$ on $A_\tau$ for all $\e,\de$. The space $H^0(A_\tau,m\T_\tau)$ is
$m^g$-dimensional, with the basis given by {\it theta functions of
order $m$}: for $\e\in(\frac{1}{m}\Z/\Z)^g$ these are defined as
$$
 \T[\e](\tau,z):=\tt\e0(m\tau,mz)
$$
\begin{rem}
To see that $\T[\e]$ is a section of the bundle $m\T_\tau$ on $A_\tau$, note that for a general ppav we have $H^2(A_\tau,\C)=\C\T_\tau$. Now compute the top power of the divisor of $\T[\e]$ on $A_\tau$, using $\T_\tau^g=g!$. Indeed, the multiplication by $m$ map has degree $m^{2g}$ on $A_\tau=\C^g/(m\Z^g+m\tau\Z^g)\cong\C^g/(\Z^g+\tau\Z^g)$, which is a degree $m^g$
cover of $A_{m\tau}=\C^g/(\Z^g+m\tau\Z^g)$, and thus the top power of the divisor of $\T[\e]$ on $A_\tau$ is $m^gg!$.
\end{rem}

The value of the (level or order) theta function at $z=0$ is called
the associated (level or order) {\it theta constant}. As a function
of $\tau$ for fixed $\e,\de$, the {\it order} $m$ theta constant is
a modular form of weight 1/2 with respect to the finite index subgroup
$\Gamma(m,2m)\subset\Sp$ (normal for $m$ even), defined as follows
in two steps:
$$
\begin{aligned}[rl]
  &\Gamma(m):=&\left\lbrace\gamma=\pmatrix A&B\\
  C&D\endpmatrix \in\Sp\, \right|\left.\, \gamma\equiv\pmatrix 1&0 \\
  0&1\endpmatrix\ {\rm mod}\ m\right\rbrace\\
  &\Gamma(m,2m):=&\left\lbrace \gamma\in\Gamma(m)\ |
  \ {\rm diag} (A^tB)\equiv{\rm diag}(C^tD)\equiv0\ {\rm mod}\ 2m\right\rbrace
\end{aligned}
$$
The {\it level} $m$ theta constant is also a modular form, also of
weight 1/2, with respect to the (smaller) group $\Gamma(m^2,2m^2)$.
\end{df}
\begin{rem}
Notice a peculiar feature of theta functions: as {\it
functions of $z$}, $m$'th powers of the level theta functions are
sections of the same bundle, $m\T$, on a fixed abelian variety, as
the theta functions of order $m$. However, theta {\it constants} of
any order or level are all of weight 1/2, with respect to the
appropriate level subgroups.
\end{rem}

\begin{df}
We call the quotient $\A_g(m,2m):=\H_g/\Gamma(m,2m)$ the {\it
level moduli space of ppavs} --- this is a finite cover of
$\A_g$. The subgroup $\Gamma(m,2m)\subset\Sp$ is normal if and only if $m$ is even, and in this case the cover is Galois.
Since all theta constants of order $m$
are sections of $\frac12 L$ on $\A_g(m,2m)$, we can use them to
define the {\it theta constant map}
$$
  \begin{aligned}
  Th_m:\A_g(m,2m)&\dashrightarrow\P^{m^g-1}\\
  [\tau]&\mapsto\lbrace\Theta[\epsilon](\tau,0)\rbrace_{
  {\rm all}\ \e\in (\frac{1}{m}\Z/\Z)^{g}}
  \end{aligned}
$$
\end{df}
A priori this is just a rational map, but the main result about it
is
\begin{thm}[Igusa for $m=4r^2$, Mumford for $m\ge4$, Salvati Manni for $m\ge3$; see \cite{igusabook},\cite{bila}]
$Th_m$ is an embedding for all $m\ge 3$.
\end{thm}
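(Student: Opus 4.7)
The plan is to reduce the claim to the classical fact that $|m\T_\tau|$ is very ample on each fiber for $m\ge 3$, and then to use Riemann's addition theorem to promote that fiberwise statement to a global statement on $\A_g(m,2m)$. Concretely, for each $\tau\in\H_g$ consider
\[
 \phi_\tau:A_\tau\to\P^{m^g-1},\qquad z\mapsto[\T[\e](\tau,z)]_{\e}.
\]
The order-$m$ theta functions $\T[\e](\tau,\cdot)$ form a basis of the $m^g$-dimensional space $H^0(A_\tau,m\T_\tau)$, so $\phi_\tau$ is the map associated to the complete linear system $|m\T_\tau|$; Lefschetz's theorem, refined by Mumford on abelian varieties, ensures that $|m\T_\tau|$ is very ample for $m\ge 3$, so each $\phi_\tau$ is a closed embedding with $\phi_\tau(0)=Th_m(\tau)$. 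In particular the theta constants have no common zero, and $Th_m$ is a genuine morphism on $\A_g(m,2m)$.

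For set-theoretic injectivity, suppose $Th_m(\tau)=Th_m(\tau')$ in $\P^{m^g-1}$. The decisive tool is Riemann's addition theorem, which expresses each product $\T[\e](\tau,z+w)\T[\e'](\tau,z-w)$ as a bilinear combination of theta values at the doubled arguments, with coefficients built out of theta constants. Iterating this identity one reconstructs the entire pair $(\phi_\tau(A_\tau),\phi_\tau(0))\subset\P^{m^g-1}$ from the single point $Th_m(\tau)$ --- essentially the content of Mumford's theta-group analysis. Once one has $(\phi_\tau(A_\tau),\phi_\tau(0))=(\phi_{\tau'}(A_{\tau'}),\phi_{\tau'}(0))$ as pointed subvarieties, pulling back yields an isomorphism of the two ppavs identifying the ordered bases of $H^0(A,m\T)$ used as coordinates --- which is exactly what it means for $\tau$ and $\tau'$ to be $\Gamma(m,2m)$-equivalent. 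The main obstacle is precisely this reconstruction step: one must show that the full embedded variety, not merely the point $\phi_\tau(0)$, is determined by $Th_m(\tau)$, and to do so with the optimal bound $m\ge 3$, which is where Salvati Manni's sharpening of Mumford's earlier $m\ge 4$ enters.

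Finally, for injectivity of the differential, a tangent vector at $[\tau]\in\A_g(m,2m)$ is a symmetric matrix $\de\tau=(\de\tau_{jk})$. The heat equation
\[
 \frac{\p\T[\e](\tau,0)}{\p\tau_{jk}}=2\pi i(1+\de_{j,k})\frac{\p^2\T[\e](\tau,z)}{\p z_j\p z_k}\bigg|_{z=0}
\]
converts $dTh_m(\de\tau)$ into a second-order $z$-jet of $\phi_\tau$ at the origin, so injectivity of $dTh_m$ reduces to linear independence in $\C^{m^g}$ of the $1+g(g+1)/2$ vectors $(\T[\e](\tau,0))_\e$ and $(\p_j\p_k\T[\e](\tau,0))_\e$ for $j\le k$. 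This is a $2$-jet separation property of $|m\T_\tau|$ that follows, for $m\ge 3$, from very ampleness combined with the same sharp analysis of theta relations used in the previous step.
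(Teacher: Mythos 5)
First, note that the survey does not actually prove this theorem --- it is stated with attributions and pointers to \cite{igusabook} and \cite{bila} --- so there is no in-paper argument to compare yours against; I can only measure your outline against the classical proofs it is clearly modeled on. Your architecture is the right one: base-point-freeness of $|m\T_\tau|$ makes $Th_m$ a genuine morphism; Riemann's addition formula and Mumford's theta-group formalism are indeed how one recovers the embedded pair $(\phi_\tau(A_\tau),\phi_\tau(0))$ from the theta-null point; and the heat equation is the standard device for converting injectivity of $dTh_m$ into a linear-independence statement about theta constants and their second $z$-derivatives. But the two steps that constitute the actual theorem are asserted rather than proved. The reconstruction of the embedded abelian variety from the single point $Th_m(\tau)$ \emph{is} the injectivity statement; you correctly flag it as the main obstacle and then discharge it by invoking ``Mumford's theta-group analysis'' and ``Salvati Manni's sharpening,'' which is a citation, not an argument. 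In particular nothing in the sketch indicates why the bound is $m\ge 3$ rather than $m\ge 4$ or $m=4r^2$ --- which is exactly where the difficulty and the history of the result lie.

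The differential step contains an outright error. The linear independence in $\C^{m^g}$ of $(\T[\e](\tau,0))_\e$ together with the $g(g+1)/2$ vectors $(\p_j\p_k\T[\e](\tau,0))_\e$ is a separation-of-$2$-jets property of $|m\T_\tau|$ at the origin, and this does \emph{not} follow from very ampleness: very ampleness is separation of points and of $1$-jets only, and in general $m\T_\tau$ is just $(m-2)$-jet ample, so for $m=3$ no $2$-jet statement is available off the shelf. The required independence must be proved directly (it is again part of the cited work, and it exploits the fact that the origin is fixed by the $\pm1$ involution, under which the value vector and the second-derivative vectors are invariant while the first derivatives are anti-invariant, so the relevant vectors live in a proper subspace and the na\"\i ve jet count is not the right one). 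A smaller point: set-theoretic injectivity plus injectivity of the differential does not by itself give a locally closed embedding for a non-proper source (the normalization of a nodal curve with one preimage of the node deleted is bijective and unramified onto its image but is not an embedding); one closes this with Zariski's main theorem and normality of $\A_g(m,2m)$, which your write-up should at least mention.
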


Algebraically this theorem says that the bundle $\frac12 L$ is
very ample on $\A_g(m,2m)$, which implies that a sufficiently high
power of $L$ is very ample on $\A_g$, and so $L$ is ample on
$\A_g$. This can be also checked directly by computing the curvature of the natural metric on $L$ and checking that it is positive.

The map $Th_2$ is known to be generically injective, and believed to be in fact an embedding --- see \cite{smlevel2}. It can in fact be shown that for $m=2k>2$ the level moduli space $\A_g(m,2m)$ (or in fact $\A_g(m)$ for any $m\ge 3$) is a smooth variety, i.e.~that the group $\Gamma_g(m,2m)$ acts freely on $\H_g$. Thus the orbifold $\A_g$ has a global manifold cover of a finite degree, which often allows one to work rigorously on the orbifold $\A_g$ by passing to the level cover.

\begin{rem}
Taking the closure of the image $Th_m(\A_g(m,2m))$ in $\P^{m^g-1}$ defines a compactification of the moduli space. It turns out that modular forms
extend to the Satake compactification (which we define in the next
section). Igusa used theta functions to study the fiber of $Th_m$ over the boundary, and showed that for $m>4$ it consists of more than one point (he computed the number of points for $m=4r^2$, and bounded it below for other $m$), while the map $Th_4$ is injective on the boundary of the Satake compactification as well. However, for $g\ge 6$ the map $Th_4$ is not an embedding of the Satake compactification --- the inverse is not regular near the boundary. The fact that there exist modular forms that are not polynomial in theta constants, and the relation of the analytic structure near the boundary of $Th_m(\A_g(m,2m))$ with the analytic structure of the Satake boundary are considered in \cite{iggen}, \cite{igdet}, \cite{smring}, \cite{smnotimm}.
\end{rem}

\begin{dsc}[{\bf Vector-valued modular forms}]

The above discussion tells us that the line bundle of scalar modular
forms is ample on $\A_g$. What about vector-valued modular forms?
This is some kind of ampleness question for a vector bundle. Let us
see what happens if $\rho:GL(g,\C)\to GL(\C^g)$ is the
standard representation $std$ tensored with a power of the $det$
(i.e.~a power of $L$).

It can be shown that the $z$-gradients at zero of order $m$ theta
functions
$$
 \left. \operatorname{grad}_z\Theta[\e](\tau,z)\right|_{z=0},
$$
are $std\otimes det^{1/2}$-valued modular forms for $\Gamma(m,2m)$.
Varying $\e$ one gets different modular forms, and thus for $m>2$ we can
define the map
$$
\begin{aligned}
  \Phi_m:\ \A_g(m,2m)&\dashrightarrow G(g,m^g)\\
  \tau&\mapsto\left\lbrace\operatorname{grad}_z\Theta[\e](\tau,z)|_{z=0}
  \right\rbrace_{{\rm all}\ \e\in (\frac{1}{m}\Z/\Z)^{g}},
\end{aligned}
$$
where $G(g,m^{g})$ denotes the Grassmannian of $g$-dimensional
subspaces of $\C^{m^{g}}$ (a priori it is a map to $Mat_{g\times
m^{g}}(\C)$, but it turns out \cite{smrank} that the rank of the
image matrix is always $g$). Notice that all theta functions of order 2 are even in $z$, and thus the map $\Phi_2$ is undefined.
\end{dsc}
\begin{thm}[--- and Salvati Manni; \cite{grsm1} for $m=4$, \cite{grsm3} for $m=4k>4$]\label{phiembed}
If the level $m=4k>4$, then the map $\Phi_m$ is an embedding, while $\Phi_4$ is generically
injective for $m=4$ (though we actually believe to be an embedding as well).
\end{thm}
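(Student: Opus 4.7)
The plan is to reduce injectivity of $\Phi_m$ to the injectivity of a higher level theta constant map $Th_n$, which we know is an embedding by the Igusa--Mumford--Salvati Manni theorem once $n \geq 3$. The key observation is that although the individual columns $\operatorname{grad}_z\Theta[\e](\tau,0)$ of the matrix $N_m(\tau)$ are only well-defined up to the $\rho=std\otimes\det^{1/2}$ transformation on the left, its $g\times g$ minors (i.e.~the Plücker coordinates of the point $\Phi_m(\tau)\in G(g,m^g)$) are genuine invariants of $\Phi_m(\tau)$, and should be identifiable with products of classical theta constants at some auxiliary level via a generalized Jacobi-type derivative identity.

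Concretely, I would first group the characteristics $\e$ into $g$-tuples $(\e_1,\ldots,\e_g)$ and compute
$$\det\bigl(\partial_{z_j}\Theta[\e_i](\tau,0)\bigr)_{i,j=1}^g$$
using a higher-genus analogue of Jacobi's derivative formula (the genus one case being $\theta_1'(0)=\pi\theta_2(0)\theta_3(0)\theta_4(0)$). Such identities, due in various forms to Rauch--Farkas, Fay, and Igusa, express these $g$-fold determinants as products of classical theta constants with characteristics determined by $\e_1,\ldots,\e_g$, at a level dividing some $n$ that depends on $m$. For $m=4k>4$ one has enough freedom in the choice of $g$-tuples to cover, via these products, a sufficient subset of the coordinates of $Th_n$ for some $n\geq 3$; hence $\Phi_m(\tau)=\Phi_m(\tau')$ forces the corresponding collections of theta constants to agree, which by the $Th_n$ embedding theorem implies $\tau\equiv\tau'\pmod{\Gamma(m,2m)}$. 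For $m=4$ the necessary identities degenerate on a special locus of $\A_g(4,8)$, so one recovers only generic injectivity. The immersion part — injectivity of $d\Phi_m$ — then follows from the heat equation
$$\frac{\partial\t(\tau,z)}{\partial\tau_{jk}}=2\pi i(1+\de_{j,k})\frac{\partial^2\t(\tau,z)}{\partial z_j\partial z_k},$$
which converts a tangent vector $v$ in the kernel of $d\Phi_m(\tau)$ into a vanishing condition on all mixed third-order $z$-derivatives of order $m$ theta functions at $0$; applying the same Jacobi-type machinery to these third derivatives forces $v=0$. Combined with injectivity and properness onto the image, this gives an embedding for $m=4k>4$.

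The main obstacle is the first step: finding, for each required $g$-tuple of characteristics, an explicit Jacobi--Rauch--Farkas-type identity expressing the gradient determinant as a product of classical theta constants with controlled level, and doing so for enough tuples to cover the entire image of some $Th_n$. The combinatorics of which tuples $(\e_1,\ldots,\e_g)$ give non-identically-vanishing determinants, and at which level the product lives, is precisely what forces the divisibility condition $m=4k$ and the inequality $m>4$ in the statement; one must also keep careful track of the $\Gamma(m,2m)$-transformation law to be sure the modular-form interpretation is consistent with the Grassmannian interpretation of $\Phi_m$.
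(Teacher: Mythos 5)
Your overall strategy --- reducing injectivity of $\Phi_m$ to injectivity of theta constant maps at auxiliary levels --- is the right one, and matches what the text indicates about the actual proof (it ``deduces the injectivity of $\Phi_m$ from the injectivity of $Th_{m/2}$ and $Th_m$''). But the specific mechanism you propose for carrying out the reduction has a genuine gap. You want to pass from the Grassmannian point $\Phi_m(\tau)$ to theta constants by evaluating $g\times g$ Pl\"ucker minors $\det\bigl(\partial_{z_j}\Theta[\e_i](\tau,0)\bigr)$ and invoking ``a higher-genus analogue of Jacobi's derivative formula'' expressing each such determinant as a product of classical theta constants. No such identity is available in the generality you need: Igusa showed that for $g\ge 2$ the na\"ive generalization of Jacobi's formula (a Jacobian determinant of theta gradients equal to a monomial, or even a controlled polynomial, in theta constants) fails, and the relation between $\det\Phi_m$ and $Th_m$ is precisely the ``old question of Weil'' that the remark immediately following the theorem describes as still open (see \cite{fa}, \cite{igweil}, \cite{smnonzero}). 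So the first and crucial step of your argument rests on a tool whose existence is itself an unresolved problem.

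The actual reduction works with quadratic expressions in the gradients rather than degree-$g$ determinants: products of pairs of theta gradients can be expanded, via addition-theorem/Riemann relations, in terms of theta constants of orders $m/2$ and $m$, which is how one recovers $Th_{m/2}(\tau)$ and $Th_m(\tau)$ from $\Phi_m(\tau)$. This also explains the two features of the statement that your proposal accounts for only vaguely: the hypothesis $4\mid m$ enters so that $m/2$ is an even integer for which the relevant level structure and relations behave well, and the case $m=4$ yields only generic injectivity because then $m/2=2$ and $Th_2$ is itself only known to be generically injective (and merely believed to be an embedding) --- not because some Jacobi-type identity degenerates on a special locus. Your use of the heat equation for the immersion part is in the right spirit, but it too would have to be routed through these quadratic relations rather than through determinant identities.
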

The condition that $m$ is divisible by 4 is likely technical, but our proof, which deduces the injectivity of $\Phi_m$ from the injectivity of $Th_{m/2}$ and $Th_m$, uses it. Note also that one can consider the gradients at zero of theta functions of level $m$, but this does not give any new information.
\begin{rem}
This implies that the vector bundle of $std\otimes det^{1/2}$-valued
modular forms is very ample on $\A_g(m,2m)$ in some sense (it can be shown that the space of such modular forms is generated by gradients of theta functions). This
theorem has a geometric interpretation, and is related to classical
algebraic geometry. Indeed, on any ppav $A_\tau$ the line bundle
$\T_\tau$ is ample, and $m\T_\tau$ is very ample for $m\ge 3$ (this
fact is known as the Lefschetz theorem). For any characteristic of level $m$ the function $\tt\e\de(z)^m$ is a section of $|m\T_\tau|$. It can be shown that the space of sections $H^0(A_\tau,m\T_\tau)$ is generated by these $m$'th powers. $Th_m(\tau)$ is then the image of the origin in the corresponding embedding
$F:A_\tau\hookrightarrow \P^{m^{2g}-1}$. Instead of taking
$F(0)$, one can take the the differential $dF(0)$, which is exactly
$\Phi_m(\tau)$.

Given a plane quartic, its bitangent lines are in one-to-one correspondence with square roots of the canonical bundle with one section, i.e.~ with odd level $2$ theta constants, and
this is what the map $\Phi_2$ is for the corresponding Jacobian in
$\M_3\subset\A_3$. In \cite{cs1} Caporaso and Sernesi show that a
plane quartic is generically determined by its bitangents, in
\cite{cs2} they generalize this to higher genus curves, and in
\cite{lehavi} Lehavi explicitly reconstructs quartics from their
bitangents. Our result is almost a generalization of all these from
curves to ppavs (though not quite: there are some issues with
symmetrizing and projectivizing that we cannot deal with for
$\A_g$), and it is also a step towards better understanding the
rings of vector-valued modular forms and to perhaps answering an old
question of Weil, essentially on the relation of the maps $\det\Phi_{m}$ and
$Th_m$. We refer to \cite{fa}, \cite{igweil} and \cite{smnonzero}
for more details on the problem and past results; we used the above
framework to further investigate this with Salvati Manni in
\cite{grsm2}.
\end{rem}

\section{Degeneration: compactifications of $\A_g$}
In the previous section we constructed explicit projective
embeddings of level covers of $\A_g$, which thus naturally induce
some compactifications. We will now proceed to construct abstractly
compactifications of $\A_g$ and understand their geometry --- their
relation to the ones obtained from projective embeddings is still
not entirely clear. The discussion we present is necessarily greatly
simplified --- we refer to \cite{fach} for the complete details in
full generality, and also to \cite{amrt}, \cite{namikawa},\cite{alexeev},\cite{alna}, \cite{huleknotes},\cite{olsson} for more
comprehensive explanations and the intuition about toroidal
compactifications. A more detailed discussion of the explicit
boundary geometry, especially for $g=2$, can also be found in the
book \cite{hubook} and the survey \cite{husa1}, while the original
constructions are given in \cite{mumfordcomp}.

The Siegel space $\H_g$ is not compact --- the entries of a period
matrix $\tau$ can tend to infinity, or $\operatorname{Im}\tau$ can
become degenerate instead of being positive definite. It can be
shown that the action of $\Sp$ can conjugate the second kind of
degeneration into the first kind of degeneration --- so the only
degeneration one needs to consider in working with $\A_g$ is when
the entries of the period matrix grow unboundedly (however, as we will see later, to construct the toroidal compactification properly, one should rather consider matrices with positive semidefinite imaginary part).

To compactify $\A_g$ we need to attach some boundary points as
limits of degenerating families; it would also be nice to have some
geometric objects that are degenerations of abelian varieties
correspond to the extra points we add as the boundary. There are two
possible approaches.

{\bf Approach 1:} we take $[\tau]\in\A_{g-1}$ as the limit of the
degenerating family  $\lim\limits_{t\to\infty} \pmatrix it&w\\
w^t&\tau\endpmatrix$ (where $w\in\C^{g-1}$ and $\tau\in\A_{g-1}$ are
fixed), i.e.~we add $\A_{g-1}$ as a boundary component. This means
that the boundary is going to be high codimension and very singular.
However, the good thing is that when we consider more complicated
degenerations, the choice of what to do is natural. Indeed, we can
set for example
$$
 \lim\limits_{t_1,t_2\to\infty} \pmatrix it_1&x&w_1\\ x&it_2 &w_2\\
 w_1^t&w_2^t&\tau\endpmatrix=[\tau]\in\A_{g-2}
$$
(recall that the imaginary part of a period matrix is
positive-definite, so this is the way the degeneration has to look).
\begin{df}
The object we get as the result is called the {\it Satake}, or {\it Baily-Borel}, or {\it minimal, compactification of $\A_g$}. As a set, it is
$$
  \A_g^S:=\A_g\sqcup\A_{g-1}\sqcup\ldots\sqcup\A_{1}\sqcup\A_0,
$$
and much more work is necessary to properly describe the analytic and algebraic structure near the boundary.
It can be seen that modular forms extend to $\A_g^S$, i.e. that the bundle $L$ extends to $\A_g^S$ as a line bundle. The extension of theta constants to the level Satake compactification can be computed directly:
$$
  \lim\limits_{t\to\infty} \Theta[\e_1\, \e_2]\pmatrix
  it&w\\ w^t&\tau\endpmatrix=
\left(\lim\limits_{t\to\infty} \Theta[\e_1](it)\right) \Theta[\e_2](\tau)=
  \delta_{\e_1,0}\Theta[\e_2](\tau),
$$
i.e.~the extension is zero if $\e_1\ne0$. Thus the map $Th_m$ extends to $\A_g^S(m,2m)$.
\end{df}
The space $\A_g^S$ is highly singular, and the boundary points
represent lower-dimensional ppavs, which of course are not
degenerations of $g$-dimensional ppavs, so let us try to get a
different compactification.

{\bf Approach 2:} We say that $\lim\limits_{t\to\infty}\pmatrix it&w\\
w^t&\tau\endpmatrix$ is the pair $(\tau,w)$. The vector $w$ is only
defined up to $\tau\Z^{g-1}+\Z^{g-1}$ (we can act by the symplectic
group, preserving the one infinity in the period matrix), i.e.~we
have $w\in A_\tau$, and so can think of the pair
$(\tau,w)\in\X_{g-1}$ as a point in the universal family. Note,
however, that if $A_\tau$ has an automorphism $\sigma$ (and all
ppavs have involution $\pm 1$), then the points $\tau,w$ and
$\tau,\sigma(w)$ would define the same semiabelian object.
\begin{df}
The object we get by adding all of these boundary points is called
the {\it partial compactification of $\A_g$}. Set-theoretically it is
$$
 \A_g^*:=\A_g\sqcup\X_{g-1}/\pm 1.
$$
$\A_g^*$ is the blowup of the partial Satake compactification
$\A_g\sqcup\A_{g-1}$ along the boundary.
\end{df}
\begin{dsc}[{\bf Rank one semiabelian varieties}]

The boundary of $\A_g^*$ is codimension one; its points represent
(torus rank one) {\it semiabelian} varieties, which are defined as
follows: given $(\tau,w)\in\X_{g-1}$ compactify the
$\C^*$-extension
\begin{equation}\label{semiab}
  1\to\C^*\to G\to A_\tau\to0
\end{equation}
to a $\P^1$-bundle $\tilde G$, by adding 0 and $\infty$ sections,
and then identify the 0 and $\infty$ sections with a shift by $w\in
A_\tau$, getting a non-normal variety $\bar G:=\tilde G/(x,0)\sim (x+w,\infty)$. The principal polarization on such a semiabelian variety is
a codimension one subvariety of $\bar G$, which intersects the
zero section of the $\P^1$-bundle $\tilde G$ in the theta divisor of
$A_\tau$, and is globally a blowup of a section of $\tilde G$ with
center $\T_{A_\tau}\cap t_w\T_{A_\tau}$ ($t_w$ denotes the
translation by $w$). The existence of such a subvariety determines
the extension in (\ref{semiab}) uniquely --- it depends on $\tau$
and $w$.
\end{dsc}

\smallskip
No choice is involved in the construction of $\A_g^*$, but it is
still not compact. How can we extend it to an actual
compactification, i.e.~what should for example be the limit
$$
  \lim\limits_{t_1,t_2\to\infty}\pmatrix it_1&x&w_1\\ x&it_2&w_2\\
  w_1^t& w_2^t&\tau\endpmatrix?
$$
We can certainly keep track of $(\tau,w_1,w_2)\in \X_{g-2}^{\times
2\, ({\rm fiberwise})}$, but if we want this type of degenerations to
form a codimension two stratum in the compactification --- after all,
we have two entries of the period matrix degenerating --- we need one
more piece of data, and that is $x$. The problem is that $x$ may also
go to $i\infty$, and may change when we conjugate the period matrix
by elements of $\Sp$ while leaving the two infinities intact. Thus to
keep track of this extra coordinate properly (and to do this in
general for higher codimension generations) we need to make a choice
of a so-called cone decomposition. We now give an idea of what this
entails, and encourage the reader to learn the theory properly by
looking at \cite{amrt},\cite{namikawa},\cite{fach},
\cite{hubook},\cite{huleknotes},\cite{alexeev},\cite{olsson} and references therein.
\begin{dsc}[Cone decomposition]
Instead of making the entries of the period matrix go to infinity, we
would now rather think of the imaginary part becoming positive {\it
semi}definite. Fix generators $x_1,\ldots,x_g$ of $\Z^g$, and think
of the space $Sym_2(\Z^g)$ of integer-valued bilinear forms on
$\Z^g$. Identifying this with the space of quadratic forms, it is a
finite-dimensional free $\Z$-module generated by $x_i^2$ and
$2x_ix_j$ for $i\le j$. Denote by $\overline{C}(\Z^g)$ the $\R_{\ge
0}$-span of the positive semidefinite quadratic rational forms on
$\Z^g$, i.e.~$\overline{C}(\Z^g)$ is the cone generated by positive
semidefinite $g\times g$ rational matrices. All of this is the data
used to understand the orbits of the $\Sp$ action on the boundary of
$\H_g$, i.e.~on the set of symmetric matrices with positive
semidefinite imaginary part.

What we would now need is to somehow have local ``coordinates'' on
$\overline{C}(\Z^g)$, in which we would be able to keep track of the
degeneration happening. Doing so globally is impossible since
$\overline{C}(\Z^g)$ is not finitely generated. Thus what we need to
do is decompose it into infinitely many finitely-generated
polyhedral cones, i.e.~each cone should be a finite span $\R_{\ge
0}q_1+\ldots +\R_{\ge 0}q_k$, where $q_i\in Sym_2(\Z^g)$ are semipositive definite, and when
two cones intersect, they should intersect along a face. Moreover,
note that the natural action $GL(g,\Z):\Z^g$ extends to an action on
$\overline{C}(\Z^g)$, and thus it is natural to ask for our cone
decomposition to be invariant under this $GL(g,\Z)$ action. There
may of course exist different cone decompositions (each encoded by a
finite amount of data, though, as the cones in it would fall into
finitely many $GL(g,\Z)$-orbits), and choosing different ones yields
different toroidal compactification.
\end{dsc}
\begin{df}
The names for some common choices of the cone decompositions and the
corresponding toroidal compactifications are the following
(unfortunately it seems that defining and discussing the precise
construction of each of these would be quite long --- the readers
interested in this are advised to read more comprehensive sources
listed above):

The {\it perfect cone, {\rm also called} first Voronoi compactification} $\AP$.

The {\it second Voronoi compactification $\AV$}.

The {\it Igusa compactification $\overline{\A_g}^{\rm Igusa}$},
which is the monoidal blowup of the Satake compactification along
the boundary, corresponding to the central cone decomposition.
\end{df}

It was shown by Namikawa \cite{namikawa} that the Torelli embedding
$\M_g\hookrightarrow\A_g$  extends to a map (no longer an embedding)
$\overline{\M_g}\to\AV$ of the Deligne-Mumford compactification.

{\bf Example:} For genus 2 all the toroidal compactifications we
mentioned above coincide. They are defined by considering the
polyhedral cone
$$
  \sigma:=\R_{\ge 0}\begin{pmatrix} 1&0\\ 0&0\end{pmatrix}+
  \R_{\ge 0}\begin{pmatrix} 1&1\\ 1&1\end{pmatrix}+
  \R_{\ge 0}\begin{pmatrix} 0&0\\ 0&1\end{pmatrix}\subset
  \overline{C}(\Z^2),
$$
(notice that all generators are indeed degenerate), and the cone
decomposition of $\overline{C}(\Z^2)$ is obtained by taking the
$GL(2,\Z)$ orbits of $\sigma$ and of its faces.

\begin{rem}
All toroidal compactifications of $\A_g$ admit a contracting
morphism to $\A_g^S$. We remark, however, that the stratum over $\A_{g-i}\subset\A_g^S$ is in general very complicated. Even the dimension of the preimage of $\A_{g-i}$ in $\overline{\A_g}$ depends on the choice of the compactification: for example the stratum of $\AP$ lying over $\A_{g-i}$ always has codimension $i$, while already the preimage of $\A_0$ under the map $\overline{\A_4}^V\to\A_4^S$ is a divisor.
\end{rem}

It is natural to ask if boundary
points of a compactification of $\A_g$ have a geometric interpretation; do they parameterize some
degenerate objects that live in a universal family? For the case of
$\AV$, the answer to these questions was recently shown to be
positive:
\begin{thm}[Alexeev \cite{alexeev}]
The second Voronoi compactification $\AV$ is an irreducible
component of a functorial compactification of $\A_g$, i.e.~of some
``natural'' compactification from the point of view of moduli
theory, over which the universal family exists. Thus the boundary
points of $\AV$ represent geometric objects, and $\AV$ is
projective.
\end{thm}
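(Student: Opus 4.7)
The plan is to realize $\AV$ as an irreducible component of a moduli space of suitably generalized polarized objects, thereby endowing it simultaneously with a modular meaning and with projectivity. I would introduce Alexeev's category of \emph{stable semiabelic pairs} $(P,\T)$: here $P$ is a projective (possibly reducible, non-normal) variety on which a semiabelian variety $G$ of dimension $g$ acts with finitely many orbits, each having connected reduced stabilizer; $\T\subset P$ is an ample Cartier divisor containing no $G$-orbit; and the numerical invariants match those of a principal polarization (in particular $\chi(\T)=1$). Over the open locus where $P$ itself is an abelian variety, this recovers $\A_g$.

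Next I would prove that the moduli functor $\overline{\M}_g^{\rm Al}$ of stable semiabelic pairs is a proper algebraic stack with a projective coarse moduli space. Boundedness is standard once one fixes $\dim G=g$ and $(\T^g)/g!=1$, and separatedness follows from rigidity of sections of ample $G$-linearized line bundles. The heart of the matter is the valuative criterion: given a one-parameter family $(A_\eta,\T_\eta)$ of ppavs over a punctured disc, one must produce a unique limit. For this I would invoke Mumford's construction of degenerating abelian varieties over a DVR, which encodes the degeneration in terms of a positive semidefinite bilinear form $B$ on the character lattice $X$ of the toric part of the limit semiabelian group. The polarization then forces a lattice-periodic decomposition of $X_\R$ into \emph{Delaunay cells} with respect to $B$, and taking $\operatorname{Proj}$ of the associated graded algebra yields the limit stable semiabelic pair.

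Having constructed $\overline{\M}_g^{\rm Al}$, I would identify the closure of $\A_g$ inside it --- the ``main component'' --- with $\AV$. By definition the second Voronoi fan inside $\overline{C}(\Z^g)$ records precisely the combinatorial types of Delaunay decompositions of $\R^g$ attached to positive semidefinite forms, which are exactly the data parameterizing semiabelic degenerations by the previous step. This gives a natural morphism $\AV\to\overline{\M}_g^{\rm Al}$ whose image is the main component; comparing formal neighborhoods at corresponding boundary points shows that this morphism is an isomorphism onto that component. Projectivity of $\AV$ and the existence of a universal family on it are then inherited from the corresponding properties of $\overline{\M}_g^{\rm Al}$.

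The most substantive step is the valuative criterion combined with the correct identification of the main component. The stability notion for $(P,\T)$ is delicate: too weak a condition allows distinct semiabelic pairs to appear as flat limits of the same family, destroying uniqueness, while too strong a condition prevents limits from existing at all. Alexeev's insight --- that $\T$ must meet every orbit closure properly and that its restriction to each component of $P$ should be a translate of an ample theta divisor on the corresponding quotient semiabelian piece --- pins down the limit uniquely and forces the extracted combinatorial data to be precisely a Delaunay decomposition rather than some other periodic cell structure. Verifying that every Delaunay tiling actually arises, and that the resulting cone-by-cone identification with the second Voronoi fan is a scheme-theoretic rather than merely set-theoretic isomorphism, requires a careful local study of versal deformations of stable semiabelic pairs, and is where the bulk of the technical work lies.
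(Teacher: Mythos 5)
The paper offers no proof of this theorem: it is a survey statement attributed to Alexeev with a citation to \cite{alexeev}, so there is no in-paper argument to compare against. Your outline is, in substance, a faithful summary of the strategy of Alexeev's actual proof: introduce stable semiabelic pairs $(P,\T)$ with $\T$ an effective ample Cartier divisor not containing any orbit of the acting semiabelian group, prove properness of the moduli functor via Mumford's degeneration data over a DVR, observe that the polarization singles out the Delaunay decomposition of the character lattice of the toric part, and identify the main component with the second Voronoi fan cone by cone. Two points deserve a caveat. First, your final step --- ``comparing formal neighborhoods \dots\ shows that this morphism is an isomorphism onto that component'' --- is precisely where the delicate issue lies: what Alexeev establishes is that $\AV$ is the \emph{normalization} of the main irreducible component of his moduli space, and whether that component is itself normal (hence literally equal to $\AV$) is not settled by the formal-local comparison alone; the statement as quoted in the survey (``an irreducible component'') is deliberately phrased to sidestep this. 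Second, projectivity of the coarse moduli space is not automatic from properness and boundedness; in Alexeev's paper it requires a separate semipositivity/ampleness argument in the style of Koll\'ar's criterion, which your sketch does not mention. Neither caveat invalidates the outline as a description of the known proof, but both are substantive pieces of the technical work rather than routine verifications.
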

\begin{thm}[Olsson \cite{olsson}]
Within the functorial compactification $\AV$ is distinguished as the
component parameterizing log smooth objects.
\end{thm}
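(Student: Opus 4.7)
The plan is to work inside Alexeev's functorial compactification $\overline{\A_g}^{\mathrm{Alex}}$, whose objects are principally polarized stable semiabelic pairs $(G \curvearrowright P, \Theta)$, and to single out $\AV$ as the maximal open substack over which the universal family is log smooth with respect to the canonical boundary log structures.

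First, I would equip both $\overline{\A_g}^{\mathrm{Alex}}$ and its universal family with their canonical fine saturated log structures coming from the reduced boundary divisors (so that over the open locus $\A_g$ everything is log trivial and the universal family is just $\X_g \to \A_g$, tautologically log smooth). With these log structures fixed, log smoothness of the universal family $(\X_g^{\mathrm{univ}}, M_{\mathrm{univ}}) \to (\overline{\A_g}^{\mathrm{Alex}}, M)$ is a pointwise property of the base: standard openness of the log smooth locus (Kato) shows that the set $U \subset \overline{\A_g}^{\mathrm{Alex}}$ over which the universal family is log smooth is open.

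Next I would carry out the local analysis at a boundary point. Such a point corresponds, via Alexeev's theory, to the data of a degeneration governed by an integral positive semidefinite quadratic form $B$ on $\Z^r$ together with a $B$-compatible integral paving $\mathcal{P}$ of $\R^r$; the local model for $(G,P,\Theta)$ is obtained from the Mumford construction applied to this data. The sheaf of relative log differentials of the universal family is then computable in terms of the toric geometry of $\mathcal{P}$: log smoothness translates into flatness and reducedness of a toric family over $\mathrm{Spec}\,\Z[\overline{C}(\Z^r)^\vee]$, which in turn forces $\mathcal{P}$ to be the Delaunay decomposition associated to $B$. Since by definition the second Voronoi cone decomposition is the $GL(g,\Z)$-invariant fan whose cones index exactly the Delaunay decompositions of all positive semidefinite forms, this identifies $U$ with (the image of) $\AV$ inside $\overline{\A_g}^{\mathrm{Alex}}$.

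Finally, I would conclude that $U$ is an irreducible component: it is open, contains the dense irreducible locus $\A_g$, and by Alexeev's theorem $\AV$ is already known to be an irreducible component of $\overline{\A_g}^{\mathrm{Alex}}$ of the expected dimension $\tfrac{g(g+1)}{2}$; dimension count and connectedness of $U$ then force $U = \AV$. The main obstacle is the middle step: showing rigorously that log smoothness of the Mumford-type family over the toric base is equivalent to the Delaunay condition on $\mathcal{P}$. This requires the careful toric/log-geometric computation carried out in Olsson's monograph, handling non-normality of the fibers (the theta divisor on the boundary is glued from translates of sections of a $\P^1$-bundle, so the local structure is only seminormal) and ensuring that the log structure sees the correct combinatorics of the paving rather than merely the ambient cone $\overline{C}(\Z^g)$.
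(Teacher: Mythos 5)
The paper offers no proof of this statement: it is a survey item, quoted with attribution to Olsson's monograph \cite{olsson} and stated without argument, so there is nothing in the paper itself against which to check your proposal. With that caveat, your sketch is a reasonable reconstruction of the overall shape of Olsson's argument --- canonical boundary log structures, openness of the log smooth locus, and the local analysis via Mumford's construction identifying log smoothness with the Delaunay condition on the paving, which is indeed where essentially all of the work lies and which you correctly flag as the main obstacle rather than claiming to have dispatched it.

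Two points deserve attention before you could call this a proof. First, the theorem as stated concerns the \emph{objects}: a stable semiabelic pair is ``log smooth'' if it admits a log structure making it log smooth over the standard log point, whereas you argue about log smoothness of the universal family over the base; these are compatible, but the passage between them (existence and essential uniqueness of the log structure on a given degenerate fiber, and its spreading out over the base) is part of what \cite{olsson} establishes and should not be treated as automatic. Second, your concluding step --- ``dimension count and connectedness of $U$ then force $U=\AV$'' --- tacitly assumes that the open locus $U$ lies inside a single irreducible component of Alexeev's space; since the extra components can meet the main component along boundary strata, you must also show that no point of an extra component carries a log smooth structure, i.e.\ that the Delaunay characterization excludes them. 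Neither issue is a fatal error, but both are genuine gaps that can only be closed by the detailed toric and log-geometric computations in \cite{olsson}, not by the general formalism you invoke.
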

In view of this theorem, and especially since it is still not even
known whether there is a universal family over $\AP$ or any other
toroidal compactification, one may ask whether $\AV$ is then the
``natural'' choice of a toroidal compactification, or whether any
other toroidal compactifications are singled out by some geometric
constructions? In the next section we will discuss why $\AP$ is also
very important. Meanwhile, there is another naturally singled out
compactification, though it may be one of those that we have defined
above.
\begin{OP}
Which compactification does the map $\Phi_m$ from theorem \ref{phiembed} induce, i.e.~what is
the structure of the closure of the image $\Phi_m(\A_g(m,2m))$, for $m=4k$?
\end{OP}
\begin{rem}
It can be shown by studying the degenerations of theta functions
directly that $\Phi_m$ extends to an embedding of $\A_g^*(m,2m)$ for $m=4k$. Since the Hodge vector bundle and its determinant line bundle extend as bundles to any toroidal compactification \cite{mumhirz}, the gradients of theta functions extend to the boundary of any toroidal compactification. However, the map $\Phi_m$ may not be defined on the boundary if the gradients no longer span a $g$-dimensional space, and injectivity seems very hard to deal with. We certainly get
some blowup of $\A_g^S$, since essentially we are somehow resolving
the singularities of $\A_g^S$ by taking derivatives of modular forms, but it is not even clear if the induced compactification is toroidal.

\smallskip
One can also ask what happens for maps induced by vector-valued
modular forms for representations of $GL(g,\C)$ other than
$std\otimes det^{1/2}$, but this currently seems to be entirely out
of reach: while we can hope to understand the degeneration of the
polarization and thus of theta functions, it is not clear how to
understand the extensions of general modular forms.
\end{rem}

\section{Birational geometry: divisors on $\A_g$}\label{biratgeom}
In this section we discuss the recent progress and the open questions
in the study of the birational geometry of $\A_g$ and its
compactifications. We give the description of the nef cone of
$\A_g^*$ (and of $\A_4^V$), due to Hulek and Sankaran; of the nef
cone of $\AP$, due to Shepherd-Barron, and the possible approaches
and known results about the effective cone. We also draw comparisons
with moduli of curves.

\smallskip 
It is a by now classical result of Borel in group cohomology saying
that $h^2(\Sp)=1$ for $g\ge 3$. Since $\Sp$ is the universal covering group for $\A_g$, and $\H_g$ is contractible, so that $NS(\H_g)=0$, this shows that the Neron-Severi group of $\A_g$ is one-dimensional. Moreover, one can show that a compactification of (an appropriate smooth level cover of) $\A_g$ is simply connected, or that such a compactification has no global holomorphic 1-forms. It then follows that for such a compactification the Picard group and the Neron-Severi group coincide (i.e. that the only numerically trivial bundle is the trivial bundle).

The space $\A_g$ is a stack, and has finite quotient singularities, so one can only talk of $\Q$-divisor on it. The classical results of Borel et al yield then $\Pic_\Q(\A_g)=\Q L$ (in fact for all $g$). Since the boundary divisor of $\A_g^*$ is irreducible, it follows that $\Pic_\Q(\A_g^*)=\Q L\oplus\Q D$. It can in fact be shown that the boundary divisor is also irreducible on $\AP$, so that it follows that $\Pic_\Q(\AP)=\Pic_\Q(\A_g^*)=\Q L\oplus\Q D$, while in general
$\Pic_\Q(\AV)$ is higher-dimensional.
\begin{df}
Recall that a divisor (we always talk about $\Q$-divisors, since we are on an orbifold/stack) is called
{\it ample} if on any subvariety (including the variety itself) its
top power is positive; a divisor is called {\it nef} (numerically
effective) if it intersects all curves non-negatively; and a divisor
is called {\it effective} if it is a positive linear combination of
codimension one subvarieties.

For a divisor $E=aL-bD\in\Pic_\Q(\A_g^*)=\Pic_\Q(\AP)$ we call the ratio
$s(E):=a/b$ {\it the slope of $E$}; if $E$ is (the closure in $\A_g^*$ or $\AP$ of) the zero locus in $\A_g$ of a
modular form, then the slope is the weight of the modular form
divided by the generic vanishing order on the boundary.
\end{df}
The sets of effective/nef/ample $\Q$-divisors form respectively the cones
$Eff/Nef/Amp$, which are important invariants. Since the group
$\Pic_\Q(\AP)=\Pic_\Q(\A_g^*)$ is two-dimensional for $g>1$, the slopes
of the boundaries of the cone (which we then call the slope of the cone, denoted
$s(Eff(\A_g^*)$, etc.) determine the cone, and computing these
cones may be more amenable than, say, for $\overline{\M_g}$, where the
Picard group is higher dimensional, and though there has been
significant progress in understanding the nef cone \cite{gkm} and the
minimal {\it slope} of the effective cone of $\overline{\M_g}$ (reviewed in \cite{farkassurvey})
the effective {\it cone} of $\overline{\M_g}$ is completely
unknown.
\begin{df}
For birational geometry it is especially important to know whether
the canonical class is ample, effective, or neither. The {\it
Kodaira dimension} of a variety $X$ is a number $\kappa$ such that
$h^0(X,mK_X)$ grows as $m^\kappa$ for $m$ large (more precisely, $\kappa(X):=\limsup\limits_{m\to\infty}\frac{\ln h^0(X,mK_X)}{\ln m}$). In general we have $\kappa(X)\in\lbrace-\infty,0,\ldots,\dim X\rbrace$, and a variety is said
to be {\it of general type} if $\kappa=\dim X$.

The Kodaira dimension of a variety is a birational invariant. The minimal model conjecture/program states that any variety of
general type is birational to a {\it canonical model}, i.e.~a
variety with only canonical singularities, and such that on it the
canonical divisor is ample. Thus if the canonical class is ample and
the singularities are canonical, the variety is its own canonical model.
\end{df}
To compute the canonical class of $\A_g$ and $\A_g^*$, one writes
down the explicit volume form $\omega(\tau):=\bigwedge_{i\le
j}\tau_{ij}$ on $\H_g$. To get the class $K_{\A_g}\in\Pic(\A_g)$ one
needs to determine the transformation properties of the form
$\omega$ under the action of $\Sp$. It turns out that
$\omega(\gamma\tau)=\det(C\tau+D)^{-g-1} \omega(\tau)$, which means
that $K_{\A_g}=(g+1)L$. Now determining the class of $K_{\A_g^*}$ is
very easy --- we just need to see how fast $\omega(\tau)$
degenerates as $\tau$ goes to the boundary of $\A_g^*$, i.e.~as say
$\tau_{11}\to i\infty$. Clearly in this case there is one factor in
$\omega$, precisely $d\tau_{11}$, which degenerates, and thus we get
$$
  K_{\A_g^*}=(g+1)L-D.
$$
The same expression is true for $K_{\AP}$.
\begin{dsc}[{\bf The nef cone of $\A_g^*$}]

Determining the nef cone is equivalent to determining the cone of
effective curve classes, as these are dual. From our review of
modular forms we know that $L$ is ample on $\A_g^S$, and thus $L$ is
nef on $\A_g^*$ (where it is a pullback from $\A_g^S$.
Moreover, on the fiber of the map
$\partial\A_g^*=\X_{g-1}\to\A_{g-1}$ over some point
$[B]\in\A_{g-1}$ the restriction $D|_D=-2\T_B$ (see \cite{mumford}),
and thus $-D$ is relatively ample with respect to this contraction map.

There are two easy to construct curve classes in $\A_g^*$. Let
$C_1\subset\A_g^*$ be any curve in the boundary projecting to a point
in $\A_g^S$, i.e.~$C_1\subset B$, where $B$ is the fiber of
$\partial\A_g=\X_{g-1}\to\A_{g-1}$ over $[B]\in\A_{g-1}$. Since $L$
is ample on $\A_g^S$ and $L.C_1=0$, the curve $C_1$ must lie in the
boundary of the cone of effective curves on $\A_g^*$. Dually, $L$ must lie in the
boundary of the nef cone of $\A_g^*$.

Another curve class in $\A_g^*$ one can consider is
$C_2:=\overline{\A_1}\times [B]$, where $[B]\in\A_{g-1}$ is fixed,
i.e.~this is the family of elliptic tails. The intersection
$L.C_2=1/24$ --- this is the (stacky) degree of $L$ on $\A_1$, which can be
computed by computing the appropriate orbifold structure on $\P^1=\H_1/SL(2,\Z)$ or by integrating the volume form over this fundamental domain. The intersection $D.C_2$ is equal to $1/2$ --- there
is exactly one point in the boundary of $\overline{\A_1}$, and the corresponding semiabelian object $\C^*$ has an involution. Thus we
have $(12L-D).C_2=0$. If we had a map of $\A_g^*$ contracting $C_2$,
we would conclude that $12L-D$ is the other boundary of the nef
cone. Unfortunately, such a map is not known, but the result still
holds.
\end{dsc}
\begin{thm}
a) (Hulek and Sankaran, \cite{husa2}) The cone of effective curves
on $\A_g^*$ is generated by $C_1$ and $C_2$, i.e.~the nef cone is
$$
 Nef(\A_g^*)=\lbrace aL-bD\mid a\ge 12 b\ge 0\rbrace,
$$
so the minimal slope of nef divisors is 12.

b) (Hulek \cite{hulek}) For the genus 2 and 3 toroidal
compactifications the same result holds (for $g\le 3$ the perfect cone, central cone, and the second Voronoi compactifications coincide).
\end{thm}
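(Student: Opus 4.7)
The plan is to exploit the two-dimensionality of $\Pic_\Q(\A_g^*)=\Q L\oplus\Q D$, so that both the nef cone of divisors and its dual, the closure $\overline{NE}(\A_g^*)$ of the cone of effective one-cycles, are each bounded by exactly two extremal rays. My approach is to identify these two rays of $\overline{NE}$ using the explicit test curves $C_1$ and $C_2$ described in the preceding discussion, and then verify that their dual divisors $L$ and $12L-D$ generate the nef cone.

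First I would establish that $L$ lies on one boundary of the nef cone: it is pulled back from the ample class on $\A_g^S$, hence is nef, and any curve $C_1$ in a fiber $B$ of $\partial\A_g^*=\X_{g-1}\to\A_{g-1}$ satisfies $L\cdot C_1=0$ because $B$ is collapsed in $\A_g^S$. Such curves exist (as $B$ is a positive-dimensional abelian variety), sweep out an extremal ray of $\overline{NE}$, and moreover have $D\cdot C_1=-2\,\Theta_B\cdot C_1<0$ by the relative anti-ampleness of $D|_D$, placing them strictly in the half-plane $D<0$. The delicate direction is the other ray: since $L\cdot C_2=1/24$ and $D\cdot C_2=1/2$, the class of $C_2=\overline{\A_1}\times[B]$ is annihilated by $12L-D$, so it suffices to prove that $12L-D$ is nef, i.e.~$D\cdot C\le 12\,L\cdot C$ for every irreducible complete curve $C\subset\A_g^*$. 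Curves contained in a contracted fiber are handled by the first step; otherwise, after passing to a smooth level cover, $C$ parameterizes a family of ppavs with only rank-one semiabelian degenerations, and $L\cdot C$ equals the degree of the Hodge bundle of that family while $D\cdot C$ counts the degenerate fibers with multiplicity.

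The main obstacle is therefore the slope inequality $12\deg(\text{Hodge})\ge\#\text{(degenerate fibers)}$ for such one-parameter families of semiabelian schemes. I would attack it by combining Mumford's formula, which computes the Chern classes of the Hodge bundle in terms of boundary divisors via Grothendieck--Riemann--Roch applied to the universal semiabelian family over a toroidal compactification, with the base case $g=1$ where $12\lambda=\delta$ on $\overline{\M_{1,1}}=\overline{\A_1}$; the strategy likely closest to Hulek and Sankaran's is to degenerate $C$ rationally to a union of $C_2$-type elliptic-tail components (with equality) and $C_1$-type vertical components (with strict inequality), forcing the bound in general. Part (b) then follows because for $g\le 3$ the perfect cone, central cone, and second Voronoi compactifications all coincide with $\A_g^*$ up to higher-codimension strata, so the same argument applies once one verifies, via the explicit cone decomposition recalled in the example, that no curve lying in those higher-codimension toroidal strata yields $D\cdot C>12\,L\cdot C$.
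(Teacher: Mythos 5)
The paper itself does not prove this theorem: it is a survey, and it only records the setup that you also give --- the test curves $C_1$ and $C_2$, the values $L\cdot C_1=0$, $L\cdot C_2=1/24$, $D\cdot C_2=1/2$, and the nefness of $L$ as a pullback from $\A_g^S$ --- before deferring the substantive direction entirely to \cite{husa2} and \cite{hulek}, even remarking that the contraction of $C_2$ which would make the dual statement formal is not known to exist. Your reduction of everything to the single inequality $D\cdot C\le 12\,L\cdot C$ for all complete curves $C$ is exactly the right framing, and that much of your write-up is correct and matches the paper's discussion.

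The gap is in your proposed proof of that inequality. The step ``degenerate $C$ rationally to a union of $C_2$-type elliptic-tail components and $C_1$-type vertical components'' is unjustified and essentially circular: the assertion that every effective curve class is a \emph{non-negative} combination of $[C_1]$ and $[C_2]$ is precisely the statement that these two rays generate the cone of effective curves, i.e.\ part (a) itself; two-dimensionality of the space of curve classes gives you coefficients, not their signs. Your appeal to Grothendieck--Riemann--Roch is also misplaced: for families of abelian varieties GRR yields relations among the Hodge classes alone (the relation $c(\E)\,c(\E^\vee)=1$ of van der Geer and Esnault--Viehweg), not a relation between $L=\lambda_1$ and $D$; the identity $12\lambda=\delta$ is special to $g=1$ (where it comes from the discriminant cusp form, or from Mumford's formula for families of \emph{curves}) and has no higher-genus analogue --- which is exactly why $12L-D$ is a nontrivial nef class rather than zero. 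Finally, your case division omits curves contained in the boundary $\X_{g-1}/\pm$ that are \emph{not} contracted to a point of $\A_{g-1}$: for these every fiber is semiabelian, $D\cdot C$ is not a count of degenerate fibers, and one must instead use the formula for $D|_D$ together with positivity properties of the universal theta divisor on $\X_{g-1}$. That boundary analysis, plus the induction on $g$ it feeds, is where the actual content of \cite{husa2} lies, and the corresponding analysis of the codimension-two toroidal strata is the content of \cite{hulek} for part (b); neither is a routine verification.
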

Genus 3 is the highest in which the first and second Voronoi
compactifications coincide. In general $\AP\ne\AV$, and the
birational map from one to the other is regular in neither
direction.

However, in dimension 4 there exists a contracting morphism
$\overline{\A_4}^V\to\overline{\A_4}^P$, with an irreducible
exceptional divisor that we denote by $E$, over the stratum $\A_0\subset\A_4^S$. The explicit geometric
and combinatorial description of the toroidal compactifications in
dimension 4, though very hard, gives an approach to the nef cone of
$\overline{\A_4}^V$ --- here is the result.
\begin{thm}[Hulek and Sankaran \cite{husa2}]\label{pic4}
The nef cone of $\overline{A_4}^P$ is the same as for the partial
compactification, i.e.
$$
 Nef(\overline{A_4}^P)=\lbrace aL-bD\mid a\ge 12 b\ge 0\rbrace.
$$
For the second Voronoi compactification, we have
$$
 \Pic_\Q(\overline{A_4}^V)=\Q L\oplus\Q D\oplus\Q E;
$$
$$
 Nef(\overline{A_4}^V)= \lbrace aL-bD-cE\mid a\ge 12 b\ge 24c\ge 0\rbrace.
$$
\end{thm}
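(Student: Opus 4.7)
My plan is to bootstrap from the nef cone of $\A_g^*$ computed above, using two morphisms: the open inclusion $\A_4^*\hookrightarrow\overline{\A_4}^P$ whose complement has codimension $\ge 2$, and the contraction $\pi:\overline{\A_4}^V\to\overline{\A_4}^P$ whose exceptional locus is the single irreducible divisor $E$ over $\A_0\subset\A_4^S$. For part (a), the codimension-$\ge 2$ statement implies $\Pic_\Q(\overline{\A_4}^P)=\Pic_\Q(\A_4^*)=\Q L\oplus\Q D$ (as the paper already notes), and since every complete curve in $\A_4^*$ is also a complete curve in $\overline{\A_4}^P$, restriction sends nef classes to nef classes, giving $\text{Nef}(\overline{\A_4}^P)\subseteq\text{Nef}(\A_4^*)=\{aL-bD\mid a\ge 12b\ge 0\}$. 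To prove equality it suffices to verify that $12L-D$ is nef on all of $\overline{\A_4}^P$, i.e.\ that it pairs non-negatively with every irreducible curve supported in the deeper boundary strata lying over $\A_{4-i}\subset\A_4^S$ with $i\ge 2$.

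For part (b), writing $\pi^*L$ and $\pi^*D$ simply as $L$ and $D$, the fact that $\pi$ is a birational contraction with single irreducible exceptional divisor $E$ forces $\Pic_\Q(\overline{\A_4}^V)=\Q L\oplus\Q D\oplus\Q E$, and every class takes the form $aL-bD-cE$. Restricting to the open locus disjoint from $E$ (the preimage of $\A_4^S\setminus\{\A_0\}$) reduces to part (a) and yields $a\ge 12b\ge 0$ on any nef class. The constraint on $c$ is detected by curves meeting $E$: a curve $C_0\subset E$ contracted by $\pi$ satisfies $L\cdot C_0=D\cdot C_0=0$ and $E\cdot C_0<0$ by negativity of the exceptional locus, forcing $c\ge 0$. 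The third extremal ray $24L-2D-E$ of the desired cone should be pinned down by one further test curve $C_3$, on which $24L-2D-E$ vanishes and whose intersections with $L,D,E$ stand in the ratio dictated by the wall. Such a $C_3$ will arise from the combinatorics distinguishing the second Voronoi from the perfect cone decomposition in genus $4$, namely from a ray of a Voronoi cone which is subdivided in passing from $\overline{\A_4}^P$ to $\overline{\A_4}^V$.

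The main obstacle in both parts is the explicit toric-combinatorial analysis of the boundary. For (a), one must enumerate the $GL(4,\Z)$-orbits of cones in the perfect decomposition of $\overline{C}(\Z^4)$ and check by a direct toric computation that each deep boundary stratum contains no curve on which $12L-D$ is negative; the special feature of genus $4$ (as opposed to higher genera, where the Mori cone of $\overline{\A_g}^P$ remains mysterious) is that this classification is small enough to carry out by hand. For (b), one must identify $E$ explicitly as the toric variety arising from the relevant refinement, exhibit $C_0$ and $C_3$ as natural ruling and cross-section classes on it, and compute their intersections with $L$, $D$, and $E$ using Mumford's extension of the Hodge bundle to the toroidal boundary. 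To verify sharpness one then produces nef divisors on each wall: for $L$ and $12L-D$ these come from the Hodge bundle and the elliptic-tail calculation, while for $24L-2D-E$ one seeks an explicit Voronoi cycle (or vector-valued modular form) whose support realizes the wall. The detailed execution of all this in genus $4$ is the technical content of \cite{husa2}.
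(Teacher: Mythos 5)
The paper itself offers no proof of this statement: it is quoted verbatim from Hulek--Sankaran \cite{husa2}, with only the remark that ``the explicit geometric and combinatorial description of the toroidal compactifications in dimension 4, though very hard, gives an approach.'' Your outline is a faithful reconstruction of that strategy --- reduce to $Nef(\A_4^*)$ via the codimension-two complement, get one containment from the test curves $C_1$, $C_2$, and then do the case-by-case toric analysis of the deep boundary strata --- so there is no conflict of method. But as written it is a program, not a proof: every step that actually carries content is deferred. Concretely, (i) the nefness of $12L-D$ on $\overline{\A_4}^P$ (equivalently, that no curve in a stratum over $\A_{4-i}$, $i\ge 2$, pairs negatively with it) is exactly the hard theorem of \cite{husa2}, and you state it as a task rather than carry it out; (ii) for $\overline{\A_4}^V$ you need both a test curve on the wall through $24L-2D-E$ \emph{and} a proof that $24L-2D-E$ is actually nef --- a curve on which it vanishes only shows the nef cone is no larger, and you produce neither the curve nor the nef divisor.

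There is also a bookkeeping slip you should repair. You declare that $D$ denotes $\pi^*D$, but the theorem's inequalities $a\ge 12b\ge 24c\ge 0$ are written in the basis where $D$ is the boundary divisor of $\overline{\A_4}^V$ itself (its strict transform); the paper's later theorem makes this explicit by introducing $F:=D+4E=\pi^*D$ as a \emph{different} class. In your convention a $\pi$-contracted curve $C_0\subset E$ indeed has $L\cdot C_0=\pi^*D\cdot C_0=0$ and $E\cdot C_0<0$, giving $c\ge 0$ --- but that is the facet $c=0$ in the $(L,\pi^*D,E)$ coordinates, and after substituting $D=\pi^*D-4E$ the three facets do not transform into the stated ones without redoing the computation. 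Either work consistently with $F$ and convert at the end, or take $D$ to be the strict transform throughout and recompute $D\cdot C_0$ (which is then $+4$ times $-E\cdot C_0$, not zero). None of this is fatal, but in its current form the proposal establishes only $Nef\subseteq\{a\ge 12b\ge 0\}$ in part (a) and the Picard group computation in part (b); the reverse inclusions are precisely what \cite{husa2} proves.
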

\begin{dsc}[{\bf Canonical model of $\A_g$}]

In \cite{husa1} the question of determining the cone $Nef(\AV)$ for
arbitrary $g$ is posed, but, as explained in \cite{husa2}, it seems
that the dimensions of $\Pic_\Q(\AV)$ grow fast with $g$, and thus this
question, though very interesting especially because of Alexeev's
interpretation of $\AV$ as the functorial compactification,
currently seems beyond reach.

However, $\Pic_\Q(\AP)$ is always two-dimensional, and in view of the
above $g\le 4$ results it is tempting to conjecture that
$Nef(\AP)=Nef(\A_g^*)$. This is indeed the case, as was recently
proven:
\end{dsc}
\begin{thm}[Shepherd-Barron \cite{shepherdbarron}]
In any genus the nef cone of $\AP$ is the same as that of $\A_g^*$,
i.e.~has minimal slope 12:
$$
  Nef(\AP)=\lbrace aL-bD\mid a\ge 12 b\ge0\rbrace.
$$
\end{thm}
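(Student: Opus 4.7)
Since $\Pic_\Q(\AP) = \Q L \oplus \Q D$, the nef cone is a two-dimensional cone determined by its two extremal rays. The ray spanned by $L$ is nef: $L$ on $\AP$ is pulled back from the ample Hodge bundle on $\A_g^S$ via the contraction $\AP \to \A_g^S$. Both candidate extremal curves live already inside $\A_g^* \subset \AP$: a curve $C_1$ in a fibre of $\X_{g-1} \to \A_{g-1}$ satisfies $L.C_1 = 0$, while the elliptic-tails curve $C_2 = \overline{\A_1}\times[B]$ satisfies $(12L - D).C_2 = 0$. The content of the theorem is thus the single statement that $12L - D$ is nef on all of $\AP$, not merely on $\A_g^*$.

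The Hulek--Sankaran theorem supplies nefness of $12L - D$ on $\A_g^*$; the new input is control of $(12L - D).C$ for curves $C$ meeting the deep boundary, i.e.~inside strata corresponding to perfect cones of dimension $\ge 2$. My plan is to induct on the depth $i$, meaning the codimension in $\A_g^S$ of the deepest Satake stratum the curve meets, with base case $i \le 1$ given by Hulek--Sankaran. The inductive step exploits two structural features of $\AP$ emphasized in the preceding section: the Satake contraction has fibres of codimension exactly $i$ over $\A_{g-i}$, making the deep boundary as thin as possible; and each boundary stratum carries an explicit toric model determined by the perfect cone decomposition of $\overline{C}(\Z^g)$.

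Concretely, near a point corresponding to a cone $\sigma$, the local model is the affine toric variety whose fan is the star of $\sigma$, with the boundary divisor $D$ cut out by the rank-one generators $x\otimes x$ of the maximal cones through $\sigma$. A curve $C$ in the depth-$i$ stratum can either be deformed within its numerical class into a one-cycle acquiring a component of strictly smaller depth (to which induction applies), or it is a toric curve in the closure of a depth-$i$ stratum. In the latter case $D.C$ is a combinatorial incidence count against the rank-one rays, while $L.C$ is determined by the Hodge bundle's behaviour on the associated semiabelian family; the desired inequality $12\,L.C \ge D.C$ becomes a combinatorial statement about perfect forms, reducing in the elliptic-tails boundary case to the genus-$1$ identity $12\lambda = \delta$ on $\overline{\A_1}$.

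The main obstacle is precisely this last combinatorial estimate. Controlling $L.C$ and $D.C$ uniformly across all perfect cones in all genera requires the rigidity coming from the classification of perfect forms by their minimal vectors — a rigidity that is absent in, say, the second Voronoi decomposition, where by Theorem~\ref{pic4} the genus-$4$ nef cone of $\AV$ already requires the extra generator $E$. The hard part is showing that no perfect cone produces a toric curve of slope strictly less than $12$, equivalently that the slope-$12$ locus in $\AP$ is entirely traced out by elliptic-tails degenerations already living inside $\A_g^*$.
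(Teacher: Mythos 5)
Your outline correctly isolates what is at stake (only the nefness of $12L-D$ on the deep boundary is new, since $\Pic_\Q(\AP)=\Q L\oplus\Q D$ and Hulek--Sankaran already handle $\A_g^*$), and your instinct to induct over the Satake strata using the toric structure of $\partial\AP$ and the special combinatorics of perfect forms is the right one --- this is indeed the skeleton of Shepherd-Barron's argument. But the proposal has a genuine gap at exactly the point you flag: the dichotomy ``either $C$ deforms within its numerical class to a one-cycle of strictly smaller depth, or $C$ is a toric curve whose $D$-degree is a combinatorial incidence count'' is asserted rather than established, and the combinatorial estimate that no perfect cone produces a toric curve of slope below $12$ is left entirely open. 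Without these two steps there is no proof.

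The mechanism that actually closes this gap in \cite{shepherdbarron} is different from your dichotomy. One first describes the stratum of $\AP$ lying over $\A_i\subset\A_g^S$ as a torus fibration over the fiberwise $(g-i)$-th power of the universal family $\X_i\to\A_i$, and uses the torus action along the fibers to \emph{average} an arbitrary effective curve, replacing it by a numerically equivalent curve on the ``zero-section'' $\X_i^{\times(g-i)\,(\mathrm{fiberwise})}$; this removes the need to classify toric curves cone by cone. One then uses the fact that the stratum over $\A_{i-1}$ is, up to codimension two, essentially the partial compactification of the power of the universal family over $\A_i^*$, to show that a curve over $\A_i$ with $(12L-D).C<0$ forces the existence of such a curve over $\A_{i-1}$; the induction therefore runs in the direction opposite to yours, pushing a hypothetical bad curve into ever deeper strata until a contradiction is reached, rather than reducing deep curves to the Hulek--Sankaran base case. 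The perfect cone combinatorics enters in establishing these structural descriptions of the strata (and in the irreducibility of $D$, which you use correctly), not in a direct slope computation for toric curves.
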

Proving this requires a very detailed study of the structure of
$\partial\AP$ and the torus action on it. One describes the strata of
$\AP\to\A_g^S$ over each $\A_i\subset\partial\A_g^S$ explicitly, as
torus fibrations over the fiberwise $(g-i)$'th power of the universal
family $\X_i$ of ppavs over $\A_i$ --- this uses the specific
geometry and combinatorics of the perfect cone decomposition. One
then uses the torus action along the fibers in each stratum to
``average'' any effective curve --- for the perfect cone
compactification we get then a curve on the ``zero-section'' of the torsor, i.e. on $\X_i^{\times(g-i)({\rm fiberwise})}$.

One then uses the fact that the stratum of $\AP$ lying over
$\A_{i-1}\subset\A_g^S$ is up to codimension two essentially the
partial compactification of the power of the universal family over
$\A_i^*$. After more hard work one eventually deduces that if there
exists a curve $C\subset\AP$ projecting to a point of $\A_i$ such
that $(12L-D).C<0$, then there exists such a curve over $\A_{i-1}$,
and then induction yields a contradiction. In doing this, the
explicit understanding of the geometry of the perfect cone enters in
many places and plays a crucial role.
\begin{cor}[Shepherd-Barron \cite{shepherdbarron}]
$\AP$ is the canonical model of $\A_g$ for $g\ge 12$, since
$K_{\AP}=(g+1)D-L$ is then ample.
\end{cor}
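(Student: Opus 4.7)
The plan is to read the corollary off from the Shepherd-Barron theorem just stated, combined with the classical Tai-type result on canonical singularities and a verification that no new non-canonical singularities are introduced on the boundary of $\AP$.

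\textbf{Step 1: Locate $K_{\AP}$ in the slope picture.} The computation preceding the theorem gives $K_{\AP}=(g+1)L-D$, so in our coordinates on $\Pic_\Q(\AP)=\Q L\oplus\Q D$ the canonical class has slope $s(K_{\AP})=g+1$. Thus $K_{\AP}$ is of the form $aL-bD$ with $a=g+1$, $b=1$.

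\textbf{Step 2: Deduce ampleness from Shepherd-Barron.} Shepherd-Barron's theorem identifies $\mathrm{Nef}(\AP)=\lbrace aL-bD\mid a\geq 12b\geq 0\rbrace$. For $g\geq 12$ we have $g+1\geq 13>12$, so $K_{\AP}$ lies in the strict interior of this two-dimensional cone. Since $\AP$ is projective (it is toroidal for a projective fan and dominates the projective Satake compactification $\A_g^S$) and $\Q$-factorial as an orbifold, Kleiman's criterion upgrades ``interior of nef'' to ``ample'', so $K_{\AP}$ is an ample $\Q$-divisor.

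\textbf{Step 3: Check canonical singularities.} To conclude that $\AP$ is the canonical model, not merely a minimal model, one needs $\AP$ to have at worst canonical singularities. On the interior $\A_g$, this is Tai's theorem, which guarantees canonical singularities once $g\geq 5$ (so certainly for $g\geq 12$). On the boundary, one must analyze the toric local model for the perfect cone decomposition together with the stabilizer action from $\Sp$: near each boundary stratum the local picture is an affine toric variety associated to a perfect cone, modulo a finite group coming from the arithmetic stabilizer. One verifies, using the Reid--Tai criterion applied both to the toric and to the arithmetic factor, that the discrepancies remain $\geq 0$. This is the delicate part of the argument and is where the detailed structure of the perfect cone decomposition (as exploited in Shepherd-Barron's proof of Theorem stated above) enters again.

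\textbf{Step 4: Conclude.} A $\Q$-factorial projective variety with canonical singularities and ample canonical divisor is by definition its own canonical model. Since $\AP$ is birational to $\A_g$, this identifies $\AP$ as the canonical model of $\A_g$.

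\emph{Main obstacle.} Step 2 is essentially a one-line application of the preceding theorem, and the numerics force the hypothesis $g\geq 12$ (the strict inequality is needed for ampleness rather than nefness). The genuine work is Step 3: verifying that the perfect cone toroidal construction does not introduce singularities worse than canonical. This requires the explicit toric--arithmetic local description of $\partial\AP$ used in Shepherd-Barron's nef cone argument, together with a Reid--Tai style discrepancy computation; the large-$g$ hypothesis is comfortably sufficient for the Reid--Tai inequality both in the interior (where Tai's bound demands $g\geq 5$) and along the boundary strata.
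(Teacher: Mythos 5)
Your proposal is correct and follows essentially the same route as the paper: ampleness of $K_{\AP}$ for $g\ge 12$ is read off from the Shepherd-Barron nef cone theorem (strict interior since $g+1>12$), and the remaining content is the singularity check, which the paper likewise delegates to Shepherd-Barron's Reid--Tai style analysis building on Tai's local computations. You also correctly use $K_{\AP}=(g+1)L-D$ (the statement as printed has the coefficients transposed), and your observation that the real work lies in the boundary discrepancy computation matches the paper's own remark.
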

The corollary follows from the theorem once it is established that
all the singularities of $\AP$ are terminal, which is done, building
upon the local ideas of the computations from \cite{tai}, in
\cite{shepherdbarron}.

Thus for $g\ge 12$ the minimal model program for $\A_g$ is complete
--- we know that $\AP$ is the canonical model.
\begin{OP}
Determine the canonical model of $\A_g$ for $g<12$.
\end{OP}

\begin{dsc}[\bf Kodaira dimension of $\A_g$]
Comparing the Kodaira dimension of a variety and its compactification
is a bit tricky --- a priori it is not clear that pluricanonical
forms on a variety would extend to a compactification. However, for
$\A_g^*$ there is no problem by the following result.
\end{dsc}
\begin{thm}[Tai \cite{tai}]
Any section of $mK_{\A_g^*}$ extends to a section of $mK_\AP$.
\end{thm}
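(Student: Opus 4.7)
The plan is to reduce the extension problem to a local calculation near each point of the higher-codimension boundary $\AP\setminus\A_g^*$, and to show that the required vanishing condition along the generic boundary $D$ already forces any pluricanonical form to be holomorphic across the deeper toric strata. In effect, this is a Koecher-type principle for toroidal compactifications.

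First I would translate analytically. Using $K_{\A_g^*}=(g+1)L-D$, a section of $mK_{\A_g^*}$ is a Siegel modular form $F$ for $\Sp$ of weight $m(g+1)$ vanishing to order at least $m$ along $D$, written as $F(\tau)\bigl(\bigwedge_{i\le j}d\tau_{ij}\bigr)^{\otimes m}$. Near a boundary point of $\AP$ lying over a cone $\sigma\subset\overline{C}(\Z^g)$ of the perfect cone decomposition, the compactification is locally modeled, up to a finite stabilizer, on $X_\sigma\times V$, where $X_\sigma$ is the affine toric variety attached to $\sigma$ and $V$ is a polydisk parametrizing the non-degenerating entries of $\tau$. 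The toric coordinates are $q_\alpha=\exp(2\pi i\langle\alpha,\tau\rangle)$ indexed by generators $\alpha$ of the dual monoid $\sigma^\vee\cap Sym_2(\Z^g)^\vee$.

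Second I would expand $F$ in its Fourier--Jacobi series $\sum c_\alpha q^\alpha$ and rewrite the pluricanonical form in toric coordinates. Each substitution $d\tau_{ij}\leadsto dq_\alpha/(2\pi iq_\alpha)$ produces a logarithmic pole along the toric boundary, so the extension across a deeper stratum is equivalent to a positivity statement: for every Fourier exponent $\alpha$ with $c_\alpha\neq 0$, the pairing of $\alpha$ with each ray generator of $\sigma$ must dominate the contribution from the Jacobian, counted with multiplicity~$m$.

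Third, and this is the heart of the matter, I would verify that positivity. The input is twofold: (a) vanishing of $F$ to order $\geq m$ along the generic boundary, which forces the Fourier support of $F$ to lie in the interior of the positive semidefinite cone $\overline{C}(\Z^g)$ to the appropriate depth; and (b) the combinatorial fact, intrinsic to the perfect cone decomposition, that every ray generator of $\sigma$ is a positive semidefinite integral quadratic form. Pairing these two, the only exponents that could produce poles in the toric factor must have $c_\alpha=0$, so the expansion is regular across the stratum; the finite-group quotient causes no obstruction since the weight $m(g+1)$ is divisible by the order of the stabilizer's character acting on the pluricanonical bundle, or one passes to a neat level cover and descends. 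The main obstacle is making this pairing estimate uniform as one varies $\sigma$ through all orbits of cones, including the non-simplicial ones that appear in the perfect cone decomposition for $g\geq 4$; this is essentially Tai's local discrepancy computation, and in modern language it yields the cleaner statement that $\AP\setminus\A_g^*$ has codimension $\geq 2$ in the normal variety $\AP$ while $\omega_\AP^{[m]}$ is reflexive, whence extension is automatic.
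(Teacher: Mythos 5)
The survey does not actually prove this statement --- it is quoted from Tai --- so there is no in-text argument to compare against; I will judge your outline on its own. The strategy is sound and is essentially the standard Tai-style argument: translate a section of $mK_{\A_g^*}$ into a weight-$m(g+1)$ modular form $F$ vanishing to order $\ge m$ along $D$, pass to the toric local models, and check that the Fourier support of $F$ clears the order-$m$ pole of $\left(\bigwedge_{i\le j}d\tau_{ij}\right)^{\otimes m}$ along every toric boundary divisor.

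Two remarks. First, your key input (b) is stated too weakly. That the ray generators of $\sigma$ are positive semidefinite integral forms is true for \emph{any} admissible cone decomposition, and by itself it does not yield the inequality $\langle T,q\rangle\ge m$ you need: for a psd integral $q$ of rank $\ge 2$ the estimate $\min\langle T,q\rangle\ge m$ over the Fourier support requires writing $q=\sum\lambda_i v_iv_i^t$ with integral $v_i$ and $\sum\lambda_i\ge1$, which is a nontrivial condition on the decomposition --- this is exactly the criterion Tai must verify cone by cone. What makes the perfect cone decomposition work cleanly is the stronger combinatorial fact that \emph{every} ray is spanned by a rank-one form $vv^t$ with $v\in\Z^g$ primitive, and that all such rays form a single $GL(g,\Z)$-orbit (the same fact that makes $\partial\AP$ irreducible); then $\langle T,vv^t\rangle=v^tTv\ge m$ is \emph{literally} the hypothesis that $F$ vanishes to order $\ge m$ along $D$, and no further estimate is needed. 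Second, your closing sentence is not a gloss on the computation but a complete and much shorter proof of the stated theorem: $\AP$ is normal, $\AP\setminus\A_g^*$ has codimension $\ge 2$ (the stratum over $\A_{g-i}$ has codimension $i$), and sections of a reflexive sheaf extend over closed subsets of codimension $\ge 2$, so the extension is automatic once one passes to a neat level cover and descends. The toric and Fourier--Jacobi analysis is what one needs for the genuinely harder statement that pluricanonical forms extend to a \emph{desingularization} of $\AP$ --- where the Reid--Tai analysis of the finite quotient singularities enters and the restrictions on $g$ appear --- but for the extension from $\A_g^*$ to $\AP$ as stated, the Hartogs-type argument suffices.
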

The study of Kodaira dimension of $\A_g$ was pioneered by Freitag,
who in \cite{freitagpaper} showed that $\A_g^*$ is of general type
for $g$ divisible by $24$, by explicitly constructing many
pluricanonical forms in this case. In \cite{tai} Tai studied the
spaces of modular forms and obtained estimates for the dimension of
the space of pluricanonical forms (see theorem \ref{minslope} and
proof for more details), which allowed him to prove directly from
the definition of Kodaira dimension
\begin{thm}[Tai \cite{tai}]
For $g\ge 9$ the space $\A_g$ is of general type.
\end{thm}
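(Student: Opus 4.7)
The plan is to show $\kappa(\A_g^*) = \dim\A_g = g(g+1)/2$ by constructing a space of pluricanonical sections of maximal growth rate. Thanks to the extension theorem just proved (Tai), every section of $mK_{\A_g^*}$ automatically extends to $mK_{\AP}$, so I may work directly on $\A_g^*$. Using the earlier computation $K_{\A_g^*}=(g+1)L-D$, a section of $mK_{\A_g^*}$ is the same datum as a scalar modular form for $\Sp$ of weight $m(g+1)$ which (i)~vanishes to order at least $m$ along the boundary divisor $D$, and (ii)~descends to a regular pluricanonical form on a resolution of the orbifold singularities of $\A_g$. The goal reduces to producing $\gtrsim m^{g(g+1)/2}$ linearly independent such forms.

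First I would invoke the Hirzebruch--Mumford proportionality principle, which, applied to the Hodge bundle and the locally symmetric structure on $\A_g$, gives the asymptotic $\dim M_k(\Sp)\sim c_g\, k^{g(g+1)/2}$ as $k\to\infty$ with an explicit positive constant $c_g$. Next I would bound the codimension of the boundary-vanishing condition using the Siegel operator (Fourier--Jacobi expansion at the cusp): the failure of a weight-$k$ form to vanish to order $m$ on $D$ is encoded by $m$ successive ``leading terms'', each of which lives in a space of modular forms on a lower-genus cusp whose dimension grows only as $k^{g(g-1)/2}$. Hence this condition cuts down dimensions by $O(m\cdot m^{g(g-1)/2})=O(m^{1+g(g-1)/2})$, which is of order $m^{g-1}$ smaller than the total, and in particular negligible as soon as $g\ge 2$.

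The delicate step is the extension condition at the quotient singularities of $\A_g^*$, arising from points of $\H_g$ with nontrivial $\Sp$-stabilizer. For such a stabilizer $\gamma$, writing its tangent-space eigenvalues as $e^{2\pi i a_1},\ldots,e^{2\pi i a_N}$ with $a_j\in[0,1)$ and $N=g(g+1)/2$, the Reid--Tai criterion says that pluricanonical forms extend across the corresponding singularity precisely when $\sum a_j\ge 1$ for every such $\gamma$ (other than quasi-reflections, which impose no condition on pluricanonical forms). The core combinatorial/number-theoretic content is to enumerate conjugacy classes of finite-order elements of $\Sp$ with fixed points in $\H_g$, compute their tangent-space eigenvalues explicitly in terms of the block structure of the symplectic matrix, and verify the inequality in each case; the handful of ``bad'' conjugacy classes where the inequality fails must be shown to have fixed loci of codimension large enough that the obstructions they impose on the space of modular forms grow strictly slower than $m^{g(g+1)/2}$. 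It is precisely the codimension bookkeeping on these exceptional loci that forces the threshold $g\ge 9$.

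The main obstacle is unquestionably this last step: a careful case-by-case analysis of finite symplectic automorphisms, their fixed loci, and the sum-of-fractional-parts inequality. Once it is carried out, combining the three estimates yields $\dim H^0(\A_g^*,mK_{\A_g^*})\ge c\, m^{g(g+1)/2}$ for all sufficiently large $m$, which is exactly $\kappa(\A_g^*)=\dim\A_g$ and hence proves that $\A_g$ is of general type for $g\ge 9$.
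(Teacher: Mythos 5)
Your overall architecture --- Hirzebruch--Mumford proportionality for the main term, a count of the linear conditions imposed by vanishing along $D$, and a Reid--Tai analysis of the quotient singularities --- is exactly Tai's, and it is what the paper sketches in the proof of theorem \ref{minslope}. But your second step contains a fatal underestimate. The $j$-th order term of the expansion of a weight-$k$ form along the boundary is not simply ``a modular form at a lower-genus cusp'': it is essentially a Jacobi form of weight $k$ and index $j$ on $\H_{g-1}$, and the dimension of the space of such objects grows like $j^{g-1}$ times the dimension of weight-$k$ forms in genus $g-1$; this is precisely Tai's estimate $\dim \T_{g-1,j}^{k}(1)\sim(2j)^{g-1}\dim A_{g-1,k}$ quoted in the paper. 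Summing over $j\le m$ therefore costs on the order of $m^{g}\,k^{g(g-1)/2}$ conditions, not $m\,k^{g(g-1)/2}$; with $k\sim m(g+1)$ this is of the \emph{same} order $m^{g(g+1)/2}$ as the total space of modular forms, so the theorem is a genuine competition between two leading constants rather than a main term beating a negligible correction. Indeed, if your estimate were correct the argument would prove general type for every $g$ for which the singularity analysis works, contradicting the unirationality of $\A_4$ and $\A_5$ recorded a few lines below in the paper.

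Relatedly, you locate the source of the bound $g\ge 9$ in the wrong place. Tai's Reid--Tai analysis of the finite stabilizers succeeds for all $g\ge 5$ (it is the part later refined by Shepherd--Barron to show that the singularities of $\AP$ are terminal), so it contributes no threshold near $9$. The threshold comes entirely from the boundary count: one needs $\dim A_{g,k}>\sum_{j\le k}\dim\T_{g-1,j}^{k}(1)^{\rm even}$, and comparing the two asymptotics (as in the proof of theorem \ref{minslope} with slope $s=g+1$, i.e.\ $M=k$) reduces to the numerical inequality $(g+1)^{g}\,g!\,2\zeta(2g)>(2\pi)^{2g}$, which first holds at $g=9$. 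With your step two corrected to this form, and the singularity step kept as you describe it, the rest of your outline does go through.
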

\begin{dsc}[{\bf Effective divisors}]

For any variety $X$ if we have $K_X=E+A$, where $E$ is an effective
divisor, and $A$ is a big $\Q$-divisor\footnote{A divisor $D$ on $X$
is called big if $h^0(X,mD)$ grows as $m^{\dim X}$.}, and the
singularities are canonical, then $X$ is of general type. Since we
know that $L$ is big and nef on $\A_g^*$, it follows that $\A_g^*$,
or, properly speaking, $\AP$ is of general type if we can find an
effective $\Q$-divisor $E$ such that $K_X=E+\e L$, for some $\e>0$,
i.e.~if there exists an effective divisor of slope
$s(E)<s(K_{\A_g^*})=g+1$.

A direct way to construct effective divisors is to consider the zero
loci of explicit modular forms. As observed by Freitag
\cite{freitagbook}, one can consider the modular form
$$
  \t_{\rm null}:=\prod\limits_{\e,\de\in(\frac12\Z/\Z)^g{\rm\
  even}}   \tt\e\de(\tau),
$$
(where even means that the scalar product $4\e\cdot\de=0\mod 2$),
for which the weight and the vanishing order can be easily computed.
This gives the slope $s(\t_{\rm null}) =8+\frac{1}{2^{g-3}}$, which
is less than $g+1$ for $g\ge8$, so this implies that $\A_g$ is of
general type for $g\ge 8$.

Constructing other explicit modular forms of small slope is quite
hard, and if one writes down a random modular form, chances are it
would be of very high slope --- indeed, if a modular form belongs to
a family that has no base locus, then its zero locus must intersect
any curve non-negatively, and thus the modular form defines a nef
divisor, which is thus of slope at least 12.
\end{dsc}

\smallskip
Alternatively one can construct effective divisors on $\A_g$ by
considering loci of abelian varieties satisfying some special
geometric property. This approach has been very successful for moduli
of curves (see \cite{fapo}, \cite{farkas}, \cite{fa22} for recent
results and \cite{farkassurvey} for a survey), but is harder to
pursue for $\A_g$ than for $\M_g$, as there are fewer geometric
constructions known that are associated to a ppav than to an
algebraic curve.

\begin{df}
The {\it Andreotti-Mayer divisor $N_0\subset\A_g^*$} is the closure
in $\A_g^*$ of the locus in $\A_g$ of those ppavs for which the
theta divisor is a singular $(g-1)$-dimensional variety.
\end{df}
Mumford used Grothendieck-Riemann-Roch for universal families and
studied the geometry of the boundary to compute the class of $N_0$.
\begin{thm}[Mumford \cite{mumford}]
The slope of the Andreotti-Mayer divisor is
$s(N_0)=6+\frac{12}{g+1}$; by comparison with $s(K_{\A_g^*})=g+1$ it
follows that $\A_g$ is of general type for $g\ge 7$.
\end{thm}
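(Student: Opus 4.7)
The plan is to identify $N_0$ with the push-forward of an explicit cycle class on the universal family $\pi:\X_g\to\AP$ and to compute that class via Grothendieck--Riemann--Roch. Let $\mathcal T$ denote the extension to $\AP$ of the universal symmetric theta line bundle, with $t:=c_1(\mathcal T)$. Since $T_{\X_g/\A_g}\cong\pi^*\E^\vee$, the fibrewise gradient $\mathrm{grad}_z\theta$ is naturally a section of $\pi^*\E\otimes\mathcal T$, so the relative singular locus $\Sing(\Theta/\A_g)$ is cut out as the zero scheme of a regular section of the rank-$(g+1)$ bundle $\mathcal T\oplus(\mathcal T\otimes\pi^*\E)$. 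Since a generic point of $N_0$ has exactly two singular theta points (a $\pm 1$-conjugate pair), identifying the class of this zero scheme with the top Chern class of the ambient bundle yields
$$
  2[N_0]\;=\;\pi_*\,c_{g+1}\bigl(\mathcal T\oplus(\mathcal T\otimes\pi^*\E)\bigr)\;\in\;\Pic_\Q(\AP).
$$

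Expanding by the splitting principle gives $c_{g+1}=\sum_{k=0}^{g}t^{g+1-k}\pi^*c_k(\E)$. Using the two basic identities $\pi_*(t^j)=0$ for $j<g$ and $\pi_*(t^g)=g!$ (because $\Theta$ is a principal polarization), only the $k=0,1$ terms survive in codimension one, giving
$$
  2[N_0]\;=\;\pi_*(t^{g+1})\;+\;g!\,L.
$$
The problem therefore reduces to the single class $\pi_*(t^{g+1})\in\Q L\oplus \Q D$. For this I would apply GRR to $\pi$ with coefficients in $\mathcal T$: by Kodaira vanishing $R^{>0}\pi_*\mathcal T=0$, so $\pi_*\mathcal T$ is an invertible sheaf on $\AP$, which on a suitable level cover is a fractional power of the Hodge line bundle $L$ (the weight-$\tfrac12$ modular property of theta constants emphasised in the excerpt). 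The GRR formula $ch(\pi_*\mathcal T)=\pi_*(e^t\cdot Td(T_{\X_g/\A_g}))$, expanded in codimension one, pins down the $L$-coefficient of $\pi_*(t^{g+1})$ over the open stratum $\A_g$.

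The truly hard part is the $D$-coefficient, which comes entirely from the boundary. On $\AP$ one must extend $\mathcal T$ and the relative tangent sheaf across the rank-one semiabelian boundary --- using the $\P^1$-bundle-mod-translation description of semiabelian varieties given in the excerpt --- and compute the boundary correction to the GRR formula, equivalently the discrepancy in $c_1(\pi_*\mathcal T)$ along $D$, coming from the Todd-class contribution of the non-normal fibre. This explicit analysis is the content of Mumford's paper \cite{mumford}; the coefficient $1/12$ that emerges, parallel to the coefficient in Mumford's analogous formula $12\lambda=\kappa+\delta$ on $\overline{\M_g}$, produces the term $12/(g+1)$ in the final slope. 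Assembling the two pieces yields $[N_0]=\alpha L-\beta D$ with $\alpha/\beta=6+12/(g+1)$, as claimed.

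For the general-type statement, compare with $s(K_{\AP})=g+1$: one has $s(N_0)<s(K_{\AP})$ precisely when $(g+1)^2-6(g+1)-12>0$, equivalently $g\ge 7$. In that range the decomposition
$$
  K_{\AP}\;=\;\tfrac{1}{\beta}\,N_0\;+\;\bigl((g+1)-\alpha/\beta\bigr)L
$$
exhibits $K_{\AP}$ as the sum of an effective divisor and a positive multiple of a big and nef class ($L$ descends from the ample class on $\A_g^S$), so $K_{\AP}$ is big. By Tai's extension theorem quoted above, pluricanonical forms on $\A_g$ match those on $\AP$, and combined with the canonicity of the singularities of $\AP$ this gives that $\A_g$ is of general type for $g\ge 7$.
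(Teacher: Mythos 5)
Your overall strategy --- realizing the relative singular locus of the universal theta divisor as the zero scheme of a section of the rank-$(g+1)$ bundle $\mathcal T\oplus(\mathcal T\otimes\pi^*\E)$, pushing forward its top Chern class, and reducing everything to $\pi_*(t^{g+1})$ and $g!\,L$ --- is exactly Mumford's, and matches the sketch given in the text around the two pushforward propositions. But the identity $2[N_0]=\pi_*c_{g+1}$ is false, and the step you use to justify it (``a generic point of $N_0$ has exactly two singular theta points, a $\pm1$-conjugate pair'') fails on one of the two components of $N_0$. The Andreotti--Mayer divisor decomposes as $N_0=\t_{\rm null}\cup N_0'$, where $\t_{\rm null}$ is the locus of ppavs whose theta divisor passes through (hence is singular at) an even two-torsion point. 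At a generic point of $\t_{\rm null}$ there is a single singular point, fixed by $\pm1$, and a local expansion of $\t$ about that point (the gradient vanishes identically there by evenness, while the Hessian is generically nondegenerate) shows the zero scheme is reduced along this component, so it enters $\pi_*c_{g+1}$ with multiplicity $1$, not $2$. The correct identity is $\pi_*c_{g+1}=[\t_{\rm null}]+2[N_0']$; establishing this decomposition and the local multiplicities on each component is a genuine part of Mumford's argument, and the number $6+\frac{12}{g+1}$ is the slope of the class $[\t_{\rm null}]+2[N_0']$, which is what the theorem means by $s(N_0)$. The reduced divisor $[\t_{\rm null}]+[N_0']$ has a different slope (for $g=4$ it is $76/9$ rather than $42/5$), so your reasoning taken at face value outputs the wrong class. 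None of this damages the general-type corollary, since any effective class of slope below $g+1$ suffices there, and your verification that $6+\frac{12}{g+1}<g+1$ exactly for $g\ge7$ is correct.

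On the boundary coefficient you defer entirely to the reference, which is fair given that this is the hard part, but note that the route you propose (extending $\mathcal T$ across the semiabelian boundary and computing a Todd/GRR correction along $D$) is not what Mumford did: as the text points out, Mumford obtained the boundary coefficient from the geometric description of $N_0$ near $\p\A_g^*$, and the purely intersection-theoretic boundary computation --- the pushforwards $p(\T^{g+1})=\frac{(g+1)!}{6}\T_B$ and $p(\T^{g}c_1(T_{\X/C}))=0$ on the universal semiabelian family over a boundary test curve --- is the later Grushevsky--Lehavi proposition quoted in this section. Relatedly, you should carry out the computation on $\A_g^*$ rather than on all of $\AP$: the universal semiabelian family and the extension of $\mathcal T$ are only needed (and only straightforward) over the torus-rank-one stratum, and since the deeper boundary has codimension at least two this already determines the class in $\Pic_\Q(\AP)=\Q L\oplus\Q D$.
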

\begin{rem}
The class of the divisor $N_0$ was later also computed by Yoshikawa
\cite{yoshikawa} by more analytic methods. Since $N_0$ is an
effective geometric divisor in $\A_g^*$, one can ask whether it is
given as the zero locus of a modular form. Work in this direction was
done, and an integral expression for $N_0$ was obtained by Kramer and
Salvati Manni in \cite{krsm}, but there is still more to be
understood about the relationship of the geometry and modular forms
here.
\end{rem}
\begin{OP}
Write down an explicit modular form for which $N_0$ is the zero
locus.
\end{OP}

This of course does not mean that all $\A_g$ are of general type. It
was known classically that $\A_1=\M_1$ and $\A_2=\M_2$ are rational,
and thus of Kodaira dimension $-\infty$.
\begin{thm}\hskip1cm\\
(Katsylo \cite{katsylo}) $\M_3$, and thus also $\A_3$, is
rational.\\
(Clemens \cite{clemens}) $\A_4$ is unirational.\\
(Donagi \cite{donagi}, Mori and Mukai \cite{momu}, Verra
\cite{verra}) $\A_5$ is unirational.
\end{thm}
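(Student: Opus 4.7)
The plan for the three parts is to reduce each to a statement about a moduli space that parametrizes curves (or curves with extra data), where unirationality can be exhibited by an explicit dominant family.

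\textbf{Rationality of $\M_3$ and $\A_3$.} Since the Torelli map $\M_3\to\A_3$ is a morphism between $6$-dimensional varieties that is generically injective (Torelli's theorem) and dominant (generic ppavs of dimension $3$ are Jacobians), it is birational, so it suffices to prove rationality of $\M_3$. A generic genus $3$ curve is non-hyperelliptic, hence canonically embedded as a smooth plane quartic, and $\M_3$ is birational to the GIT quotient of $\P H^0(\P^2,\mathcal{O}(4))=\P^{14}$ by $PGL_3$. The plan is then to follow Katsylo: exhibit a $PGL_3$-equivariant section or slice of the family of plane quartics that produces a rational cross-section. Concretely one studies a distinguished $PGL_3$-equivariant rational map from $\P^{14}$ to a smaller representation, then invokes the no-name lemma to trade invariants of a linear action for the generic quotient. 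The main obstacle is the classical one of rationality of quotients of a rational variety by a positive-dimensional reductive group; one works around it using the very specific geometry of plane quartics (e.g.\ distinguished contact curves, Scorza quartics, or pencils of conics).

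\textbf{Unirationality of $\A_4$.} The plan is to use the Prym map $P:\mathcal{R}_5\to\A_4$, where $\mathcal{R}_5$ is the moduli of pairs $(C,\eta)$ with $C$ a smooth curve of genus $5$ and $\eta$ a non-trivial $2$-torsion line bundle (equivalently, an \'etale double cover $\tilde C\to C$). One has $\dim\mathcal{R}_5=12>10=\dim\A_4$, and classical results (Beauville, Donagi) show that $P$ is dominant with generically irreducible $2$-dimensional fibers. Hence unirationality of $\mathcal{R}_5$ implies unirationality of $\A_4$. The plan is then to follow Clemens and produce an explicit rational dominant map from a projective space to $\mathcal{R}_5$, for example by realizing the general genus $5$ curve as a plane sextic with four nodes (or as a canonical curve, a complete intersection of three quadrics in $\P^4$) and keeping track of a $2$-torsion point geometrically. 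The hard part is to parametrize the $2$-torsion data $\eta$ together with $C$ by a rational variety, so that the whole pair varies in a family with rational base.

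\textbf{Unirationality of $\A_5$.} The plan is identical in spirit: use the Prym map $P:\mathcal{R}_6\to\A_5$, which is a map between $15$-dimensional varieties. By Donagi's tetragonal construction $P$ is dominant (of degree $27$), so unirationality of $\mathcal{R}_6$ implies unirationality of $\A_5$. The plan is to invoke Verra's construction, which realizes the generic $(C,\eta)\in\mathcal{R}_6$ via a projective model that is unirationally parametrized: the general genus $6$ curve arises as a plane sextic or as a hyperplane section of a smooth del Pezzo surface, and the $2$-torsion $\eta$ can be encoded geometrically (via a distinguished configuration of nodes, or a pair of $g^1_4$'s), yielding a dominant rational map from a projective space to $\mathcal{R}_6$. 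An alternative route, used by Mori--Mukai and Donagi, produces the Prym directly from a Fano threefold construction. In both approaches the main obstacle is, again, to keep the $2$-torsion data in a rationally varying family; once that is achieved the Prym construction is algebraic and composes with the parametrization to give unirationality of $\A_5$.
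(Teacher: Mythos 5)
The paper does not prove this theorem: it is a list of citations to Katsylo, Clemens, Donagi, Mori--Mukai and Verra, so there is no argument in the text to compare yours against. Judged on its own, your proposal is a reasonable roadmap but not a proof, because in each of the three parts the step you flag as ``the main obstacle'' is precisely the entire content of the cited paper. For $\M_3$, the reduction to rationality of the $PGL_3$-quotient of the $\P^{14}$ of plane quartics is the easy classical step; Katsylo's theorem \emph{is} the rationality of that quotient, and your plan only gestures at how one might attack it. For $\A_5$, the reduction to unirationality of $\mathcal{R}_6$ via dominance of the Prym map (degree $27$, Donagi--Smith) is standard; the substance of Verra's and Donagi's papers is the explicit unirational parametrization of $\mathcal{R}_6$ (resp.\ of a Prym-bearing family), which you again defer.

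Two concrete corrections. First, Clemens' proof of the unirationality of $\A_4$ in the cited paper (titled ``Double solids'') does not go through the Prym map $\mathcal{R}_5\to\A_4$: it realizes the general $4$-dimensional ppav as the intermediate Jacobian of a quartic double solid with six nodes and exhibits a rational family of such threefolds dominating $\A_4$. Your Prym route is a legitimate alternative, but it then requires the unirationality of $\mathcal{R}_5$, a nontrivial theorem you neither prove nor cite, so as written this part has a genuine gap rather than merely a different attribution. Second, a general curve of genus $5$ is a plane sextic with \emph{five} nodes (genus $=10-\delta$), not four; with four nodes you get genus $6$, which is the model relevant to $\mathcal{R}_6$ and $\A_5$, not to $\A_4$.
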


Thus since the 1980s only the Kodaira dimension of $\A_6$ remained
unknown.

Since $\Pic_\Q(\A_g^*)=\Q^2$, to compute the class of any divisor in
it all that is needed is to compute the intersection numbers of this
divisor with two numerically non-equivalent test curves. Since
$\Pic_\Q(\A_g)=\Q$, only one test curve can be taken to be an
arbitrary curve lying completely in $\A_g$ (these exist for $g\ge 3$,
see \cite{kesa} or section 7 below for a discussion of related
questions). Since $\Pic_\Q(\A_g^S)=\Q$, for the other test curve we
can take a curve in $\A_g^*$ contracted to a point in $\A_g^S$ ---
this means that we can choose a ppav
$[B]\in\A_{g-1}\subset\partial\A_g^S$ general, and take a general
curve $C\in B\subset\partial\A_g^*$.

Then to compute the class of $N_0$ one can do the following: restrict
the universal theta divisor and the universal family
$\Theta_g\subset\X_g$ to a test curve $C$ (and denote the
restrictions $\Theta\subset \X$), and then use the ramification
formula for the map $\Theta\to C$, which would thus give the
intersection number $N_0.C$ in terms of some intersection numbers of
classes $\T$ and $c_1(T_{\X/C})$ on $\X$. Mumford performed this
computation for a test curve $C\subset\A_g$, but over the boundary
relied on the geometric description of $N_0$ to compute the
corresponding coefficient. If one were to try to compute the class of
any other geometrically defined divisor, such a geometric approach
might not work.

However, the intersection theoretic computation can also be carried
out over the boundary. Indeed, in this case $\X$ should be the
universal semiabelian family over a curve $C\subset\partial\A_g^*$
(that is contracted to $[B]\in\A_{g-1}\subset\partial\A_g^S$). This
universal family is in fact the total space of the projectivized
Poincar\'e bundle on $B\times B$ restricted to $B\times C$ --- see,
for example, \cite{alexeev},\cite{huleknotes}. Once the intersection
numbers on this family are computed, the class of $N_0$ (and thus
potentially of other divisors) can be computed directly, without
appealing to the specific geometry of the situation. This was
recently accomplished, and the computation results are as follows.

\begin{prop}[Mumford \cite{mumford}]
For $p:\X\to C$ being the universal family over a test curve
$C\subset \A_g$ the pushforwards are
$$
  p(\T^{g+1})=\frac{(g+1)!}{2}L;\quad p(\T^{g}L)=g!L.
$$
\end{prop}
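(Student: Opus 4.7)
The second identity is nearly immediate. On each fiber $A_t$ the class $\T|_{A_t}$ is the principal polarization, so Riemann--Roch on $A_t$ (whose tangent bundle is trivial, making the Todd class equal to $1$) yields $1 = \chi(A_t,\mathcal{O}(\T)) = \T^g|_{A_t}/g!$, hence $\T^g\cdot[A_t] = g!$ and $p_*(\T^g) = g!\,[C]$. Applying the projection formula to $p^*L$ then gives $p_*(\T^g\cdot p^*L) = g!\,L$.

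For the first identity my plan is to apply Grothendieck--Riemann--Roch to the smooth proper morphism $p:\X\to C$ and the line bundle $\mathcal{O}(\T)$. Two inputs set up the machine. First, because each fiber is a ppav with principal polarization, $R^i p_*\mathcal{O}(\T) = 0$ for $i>0$ and $p_*\mathcal{O}(\T)$ is a line bundle on $C$. Second, the relative tangent bundle is trivial along each fiber, trivialized by constant vector fields, which globally identifies $T_{\X/C}\cong p^*\E^\vee$; hence $\mathrm{Td}_1(T_{\X/C}) = -p^*L/2$, and all higher Todd contributions drop because $\dim C = 1$. Expanding $\mathrm{ch}(p_*\mathcal{O}(\T)) = p_*\bigl(e^{\T}\cdot p^*\mathrm{Td}(\E^\vee)\bigr)$ and picking off the codimension-one part on $C$, only the terms $\T^{g+1}/(g+1)!$ and $(\T^g/g!)\cdot(-p^*L/2)$ survive, leading to
$$c_1\bigl(p_*\mathcal{O}(\T)\bigr) \;=\; \frac{p_*(\T^{g+1})}{(g+1)!}\;-\;\frac{L}{2}.$$

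It then remains to compute $c_1(p_*\mathcal{O}(\T))$ independently and show that it vanishes. Fiberwise this line bundle is spanned by the Riemann theta function $\t(\tau,\cdot)$, so evaluation at the origin yields a morphism of line bundles $p_*\mathcal{O}(\T)\hookrightarrow i_0^*\mathcal{O}(\T)$, where $i_0:C\hookrightarrow\X$ is the zero section. Working on an appropriate level cover $\A_g(m,2m)$ where half-integer weights are realized by honest line bundles, one identifies $i_0^*\mathcal{O}(\T)$ with $L^{1/2}$ via the fact that $\t(\tau,0)$ is a modular form of weight $1/2$, and recognizes the cokernel of the evaluation map as the structure sheaf of the theta-null divisor $\{\t(\tau,0)=0\}$, which is again the zero locus of a weight-$1/2$ modular form and hence has class $L/2$ in $\Pic_\Q$. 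The two contributions cancel to give $c_1(p_*\mathcal{O}(\T)) = 0$, and substituting back produces $p_*(\T^{g+1}) = \frac{(g+1)!}{2}L$.

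The main obstacle I anticipate is precisely this last identification: handling the half-integer weights cleanly, since $L^{1/2}$ is not an honest line bundle on $\A_g$ but only on the level cover, and one must choose a symmetric representative of the theta class on $\X$ for the identification $i_0^*\mathcal{O}(\T)\cong L^{1/2}$ to be canonical rather than twisted by a character. An alternative route, valid over any base field and preferred by Mumford, replaces the modular-form argument by a direct algebraic computation of the universal theta line bundle via the theorem of the cube and the theorem of the square; this pins down the twist of $\mathcal{O}(\T)$ relative to the zero section and yields the same output, at the cost of more intricate descent bookkeeping but without ever invoking theta constants.
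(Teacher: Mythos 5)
Your argument is correct and is essentially the approach the paper attributes to Mumford: the survey gives no proof of its own but states that the computation is done by Grothendieck--Riemann--Roch for the universal family, which is exactly your route (GRR with $T_{\X/C}\cong p^*\E^\vee$, only $\mathrm{Td}_0$ and $\mathrm{Td}_1$ surviving over a curve, plus the identification of $c_1(p_*\mathcal{O}(\T))$ and the fiberwise computation $\T^g\cdot[A_t]=g!$ for the second identity). The one simplification worth noting is that $c_1(p_*\mathcal{O}(\T))=0$ follows immediately because the canonical section of $\mathcal{O}_{\X}(\T)$ pushes forward to a nowhere-vanishing section of the line bundle $p_*\mathcal{O}(\T)$ (no fiber is contained in $\T$), so your detour through $i_0^*\mathcal{O}(\T)\cong L^{1/2}$ and the theta-null divisor, while valid in $\Pic_\Q$ of a level cover, is not needed.
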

\begin{prop}[--- and Lehavi \cite{grle}]
For $p:\X\to C$ being the universal semiabelian family over a test
curve $C\subset\p\A_g^*$, contracted to a point
$[(B,\T_B)]\in\A_{g-1}$ the pushforwards are
$$
  p(\T^{g+1})=\frac{(g+1)!}{6}\T_B,\quad p(\T^{g}c_1(T_{\X/C})=0,.
$$
\end{prop}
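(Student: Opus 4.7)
The plan is to reduce both intersection numbers to computations on a suitable $\P^1$-bundle, where Chern-class calculus is standard, and then push down. The class is most of the work; the dimension/degeneration bookkeeping is the rest.

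\textbf{Setup.} Because $C$ is contracted to $[B]\in\A_{g-1}$, the test curve lies in a single fiber of $\partial\A_g^*=\X_{g-1}/\pm1\to\A_{g-1}$, so we identify $C\subset B$. From the explicit construction of torus-rank-one semiabelian varieties recalled in the text, the universal family $p:\X\to C$ is the (non-normal) identification of a $\P^1$-bundle $\tilde p:\tilde\X=\P(\O\oplus\mathcal{P}|_{B\times C})\to B\times C$, where $\mathcal{P}$ is the Poincar\'e bundle on $B\times B$, and the quotient map $\nu:\tilde\X\to\X$ glues the zero and infinity sections fiberwise by the translation $(b,w)\mapsto(b+w,w)$. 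Since $\nu$ is an isomorphism away from a codimension-two locus, $p_*(\T^{g+1})=\tilde p_*\nu^*\T^{g+1}$ and likewise for the second expression; thus all intersections can be done on the smooth $\tilde\X$.

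\textbf{Class of the pulled-back theta divisor.} The heart of the argument is identifying $\nu^*\T$ in $\Pic(\tilde\X)$. Set $\xi=c_1(\O_{\tilde\X}(1))$, let $\pi_1,\pi_2$ be the two projections from $B\times C$, and let $\Delta\subset B\times C$ be the restriction of the diagonal. Using the Fourier–Jacobi expansion of the theta function as $\tau_{11}\to i\infty$ (equivalently, the description in the text that on each fiber $\bar G_w$ the polarization is a blowup of a section of $\tilde G_w$ whose center is $\T_B\cap t_w\T_B$), one checks the class
$$
 \nu^*\T \;=\; \xi+\pi^*\bigl(\pi_1^*\T_B-[\Delta]+\pi_2^*\alpha\bigr)
$$
for some class $\alpha$ on $C$ whose precise value does not matter for our pushforward. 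This is the step I expect to be the main obstacle: assembling the correct Poincar\'e-bundle twist and identifying the horizontal component requires careful tracking of the degeneration, and the sign/normalization of the $\xi$-coefficient must match the standard relation $\xi^2+\pi^*c_1(\mathcal{P})\xi=0$ coming from the $\P^1$-bundle structure.

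\textbf{Computing the pushforwards.} Once $\nu^*\T$ is identified, expand $(\xi+\pi^*\beta)^{g+1}$ by the binomial theorem with $\beta=\pi_1^*\T_B-[\Delta]+\pi_2^*\alpha$. The pushforward $\pi_*:H^*(\tilde\X)\to H^*(B\times C)$ takes $\xi\mapsto 1$ and $\xi^k$ (for $k\ge 2$) to an expression in $c_1(\mathcal{P})$ via the projective-bundle relation. After this, the remaining pushforward $\rho:B\times C\to C$ is the fiber integral over $B$, and the only monomials in $\beta$ that survive are those of top degree in $\pi_1^*\T_B$, since $\T_B^{g-1}=(g-1)!$ on $B$ while $\T_B^g=0$. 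Collecting the surviving binomial coefficients $\binom{g+1}{k}$ against $\T_B^{g-1}\cdot C$ (the remaining intersection on $B\times C$) yields the single coefficient $(g+1)!/6$ times $(\T_B\cdot C)$, giving the first formula.

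\textbf{The vertical identity.} For the second formula, use the factorization $\tilde p=\rho\circ\pi$ and the relative Euler sequence for $\pi$ to get $c_1(T_{\tilde\X/B\times C})=2\xi+\pi^*c_1(\mathcal{P})$; then $c_1(T_{\tilde\X/C})=2\xi+\pi^*c_1(\mathcal{P})+\pi^*c_1(T_{B\times C/C})$, and the last term equals $\pi^*\rho^*0=0$ since $T_B$ is trivial. The product $(\nu^*\T)^g\cdot c_1(T_{\tilde\X/C})$ expanded in $\xi$, after using $\xi^2=-\pi^*c_1(\mathcal{P})\xi$ and $\pi_*\xi=1$, produces a sum each of whose terms on $B\times C$ contains $\T_B$ to a power $\ge g$, hence vanishes on the $(g-1)$-dimensional $B$. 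The contribution that could survive would come from $c_1(\mathcal{P})\cdot\T_B^{g-1}$, but this is zero by symmetry/parity of $c_1(\mathcal{P})$ under the $\pm 1$ involution pulled back from $B\times B$, giving $p_*(\T^g c_1(T_{\X/C}))=0$.
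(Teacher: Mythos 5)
Your overall framework --- normalizing the rank-one semiabelian family to the $\P^1$-bundle $\P(\mathcal{O}\oplus\mathcal{P}|_{B\times C})\to B\times C$ attached to the Poincar\'e bundle $\mathcal{P}$ and computing there --- is exactly the route indicated in the text and used in \cite{grle}. But there is a genuine gap at the step you yourself flag as the main obstacle: you never determine the class $\nu^*\T$, and the two claims you use to bypass this are both false. First, your candidate $\beta=\pi_1^*\T_B-[\Delta]+\pi_2^*\alpha$ is dimensionally inconsistent for $g>2$: the restricted diagonal is a curve in $B\times C$, not a divisor; the term needed in that slot is the $H^1(B)\otimes H^1(C)$-component, i.e.\ a multiple $a\,c_1(\mathcal{P})$. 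Second, the assertion that the precise value of $\alpha$ does not matter is wrong: the monomial $\pi_1^*\T_B^{g-1}\cdot\pi_2^*\alpha$ survives the fiber integral over $B$ and contributes $(g+1)!\deg\alpha$ to $p_*(\T^{g+1})$. Third, it is not true that only top powers of $\pi_1^*\T_B$ survive: the terms $c_1(\mathcal{P})^2\cdot\pi_1^*\T_B^{g-2}$ also survive, via the standard identity $\pi_{2*}\bigl(c_1(\mathcal{P})^2\,\pi_1^*\T_B^{g-2}\bigr)=-2\,(g-2)!\,\T_{B}$ on $B\times\hat B$, and these are exactly what produce the denominator $6$, through $\binom{g+1}{3}$ and the cross terms with $a$. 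Writing $\nu^*\T=\xi+\pi^*(\pi_1^*\T_B+a\,c_1(\mathcal{P})+\pi_2^*\gamma)$ and carrying out your binomial expansion honestly (with the convention $\xi^2=-\pi^*c_1(\mathcal{P})\,\xi$), one finds $p_*(\T^{g+1})=(g+1)!\,\bigl[\deg\gamma+(a-a^2-\tfrac13)(\T_B\cdot C)\bigr]$, so the stated answer holds only for the specific $a$ and $\gamma$ produced by the Fourier--Jacobi expansion, which must satisfy $\deg\gamma=(a^2-a+\tfrac12)(\T_B\cdot C)$ and which you have not computed.

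The same issue undermines the second identity. With $c_1(T_{\X/C})=2\xi+\pi^*c_1(\mathcal{P})$, the pushforward of $(\nu^*\T)^g\,c_1(T_{\X/C})$ to $B\times C$ equals $\beta^g+(\beta-c_1(\mathcal{P}))^g$, and the individual monomials $\pi_1^*\T_B^{g-1}\pi_2^*\gamma$ and $c_1(\mathcal{P})^2\pi_1^*\T_B^{g-2}$ do not vanish; the total vanishes only because of the same relation between $a$ and $\deg\gamma$. Your parity argument for $c_1(\mathcal{P})\cdot\T_B^{g-1}$ is beside the point --- that term dies for degree reasons --- while the genuinely dangerous quadratic term is not addressed. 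In short, the skeleton is right, but the heart of the argument, namely pinning down the universal theta class on the normalization including its $c_1(\mathcal{P})$- and base-components, is missing, and without it neither stated coefficient can be recovered.
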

These results should allow computation of the classes in
$\Pic_\Q(\A_g^*)$ of many geometrically defined divisors ---
unfortunately the ones we have already tried did not give low slope.

\smallskip
The table of slopes of various effective divisors is as follows; here
$N_0^*:=N_0-2\t_{\rm null}$ has slope slightly less than $N_0$, and
was thus used by Mumford:

\smallskip
\begin{center}
\begin{tabular}{|c|r|r|r|}
  \hline
  $\vphantom{\dfrac{1}{2}} g$&$s(K_{\A_g^*})$
  & $s(\t_{\rm null})$&$\ s(N_0^*)\ $\\
  \hline
  4&5&8.5 &8   \\
  5&6&8.25&7.71\\
  6&7&8.13&7.53\\
  7&8&8.06&7.40\\
  \ldots&\ldots&\ldots&\ldots\\
  $\infty$&$\infty$&8&6\\
  \hline
\end{tabular}
\end{center}

\smallskip
Notice that for all genera $g\ge 5$ we in fact have $s(\t_{\rm
null})>s(N_0^*)>6$, and it seems very natural to wonder whether the
minimal slope of $Eff(\A_g^*)$ is always at least 6. This is
absolutely not the case.
\begin{thm}[Riccardo Salvati Manni explained to us how this is obtained
by improving the bounds in Tai \cite{tai}] \label{minslope} There
exists an effective divisor on $\A_g^*$ of slope at most
$$
 \frac{(2\pi)^2}{\sqrt[g]{g!}\,\sqrt[g]{2\zeta(2g)}}.
$$
\end{thm}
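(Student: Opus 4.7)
The plan is to produce an effective divisor $E=aL-bD$ on $\A_g^*$ whose slope $a/b$ achieves the stated bound by finding a Siegel modular form of weight $a$ vanishing to order $\geq b$ along the boundary divisor $D$, and showing via dimension count that such a form exists whenever $a/b$ is just above the claimed quantity. The divisor of such a form is $aL-b'D$ with $b'\geq b$, of slope $\leq a/b$.

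For the dimension of the space $M_k$ of weight-$k$ modular forms for $\Sp$, I would apply Hirzebruch--Mumford proportionality to $L$ on a smooth toroidal compactification: $\dim M_k$ is asymptotic to $V_g\,k^{g(g+1)/2}$, where $V_g = L^{g(g+1)/2}/(g(g+1)/2)!$ is computable explicitly through Siegel's volume formula for $\Sp\backslash\H_g$ as an expression in $\pi$, factorials, and $\prod_{j=1}^g\zeta(2j)$. For the codimension of the subspace $M_k^{(b)}\subset M_k$ of forms vanishing to order $\geq b$ along $D$, I would use the Fourier--Jacobi expansion at the cusp of $D$, namely $f(\tau)=\sum_{m\geq 0}\phi_m(\tau',z)\,q^m$ with $q=e^{2\pi i\tau_{11}}$ and $\phi_m$ a weight-$k$, index-$m$ Jacobi form on $\A_{g-1}$. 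Vanishing to order $\geq b$ along $D$ is equivalent to $\phi_0=\cdots=\phi_{b-1}=0$, so $\mathrm{codim}\,M_k^{(b)} \leq \sum_{m<b}\dim J_{k,m}$.

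Next I would bound $\sum_{m<b}\dim J_{k,m}$. A Hirzebruch--Mumford-type computation on the universal family $\X_{g-1}\to\A_{g-1}$ yields, for large $k,m$, an asymptotic $\dim J_{k,m}\sim C_{g-1}\,k^{g(g-1)/2}\,m^{g-1}$ with $C_{g-1}$ computable in terms of $\pi$, factorials, and $\prod_{j=1}^{g-1}\zeta(2j)$; summing and using $\sum_{m<b}m^{g-1}\sim b^g/g$ gives a total of order $W_g\,k^{g(g-1)/2}\,b^g$. The precise value of $W_g$ is where Tai's original estimate loses information, since he uses a coarser volume bound on the cone of positive semidefinite forms. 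Salvati Manni's improvement is to retain the genuine Hirzebruch--Mumford constants for Jacobi forms --- equivalently, to use an Epstein-zeta-sharp lattice count --- so that in forming the ratio $W_g/V_g$ every $\zeta(2j)$ with $j<g$ cancels against the corresponding factor in $V_g$, leaving precisely $2\zeta(2g)$ together with $g!$ in the denominator.

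Combining the estimates, $M_k^{(b)}\ne 0$ whenever $V_g\,k^{g(g+1)/2} > W_g\,k^{g(g-1)/2}\,b^g$, i.e.\ $(k/b)^g > W_g/V_g = (2\pi)^{2g}/(g!\cdot 2\zeta(2g))$ after the cancellation above. Consequently, for any $k/b$ strictly larger than $(2\pi)^2/(\sqrt[g]{g!}\,\sqrt[g]{2\zeta(2g)})$, a nonzero $f\in M_k^{(b)}$ exists, and its divisor yields the desired effective $\Q$-divisor of slope at most $k/b$; taking $k/b$ arbitrarily close to the bound yields the theorem. The main obstacle is Step 3: keeping careful track of the Hirzebruch--Mumford constants for Jacobi forms on $\X_{g-1}$ and verifying that they cancel cleanly against those in $V_g$. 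Tai had enough slack to establish that $\A_g$ is of general type for $g\geq 9$ (which only requires slope $<g+1$), but producing the sharp expression $(2\pi)^2/(\sqrt[g]{g!}\,\sqrt[g]{2\zeta(2g)})$ --- with its tiny value for large $g$ --- requires this finer Hirzebruch--Mumford-based accounting, which is the essence of Salvati Manni's refinement of Tai's technique.
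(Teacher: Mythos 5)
Your proposal follows essentially the same route as the paper: Tai's spaces $\T_{g-1,m}^{k}$ of boundary expansions are exactly your Fourier--Jacobi coefficient spaces (with $\dim\T_{g-1,m}^{k}(1)\sim(2m)^{g-1}\dim A_{g-1,k}$ playing the role of your $\dim J_{k,m}\sim C_{g-1}k^{g(g-1)/2}m^{g-1}$), the asymptotic $\dim A_{g,k}$ is the Hirzebruch--Mumford/Siegel volume constant $\prod_{j=1}^{g}\frac{(j-1)!}{(2j)!}B_j$, and the ratio computation produces precisely the cancellation of all $\zeta(2j)$ with $j<g$ that you predict, leaving $s^g\,g!\,2\zeta(2g)/(2\pi)^{2g}>1$. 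The only bookkeeping detail you omit is that one must count only the \emph{even} boundary expansions (because $\partial\A_g^*=\X_{g-1}/\pm1$), but this is absorbed into the constants exactly as in Tai.
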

\begin{cor}
The slope of the effective cone goes to zero as $g$ increases:
$\lim\limits_{g\to\infty} s(Eff(\A_g^*))=0$.
\end{cor}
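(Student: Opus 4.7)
The plan is simply to unpack the asymptotics of the explicit upper bound furnished by theorem \ref{minslope}. Since that theorem produces an honest effective divisor on $\A_g^*$ whose slope is at most
$$\frac{(2\pi)^2}{\sqrt[g]{g!}\,\sqrt[g]{2\zeta(2g)}},$$
it suffices to show that this quantity tends to $0$ as $g\to\infty$.

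First I would handle the factorial factor using Stirling's formula $g!\sim\sqrt{2\pi g}\,(g/e)^g$, which gives $\sqrt[g]{g!}\sim (g/e)\cdot(2\pi g)^{1/(2g)}\sim g/e$, so this factor in the denominator grows linearly in $g$. For the zeta factor, since $\zeta(2g)=1+\sum_{n\ge 2}n^{-2g}$ decreases monotonically to $1$ (indeed $\zeta(2g)-1=O(2^{-2g})$), we have $\sqrt[g]{2\zeta(2g)}\to 1$. Combining, the upper bound is asymptotic to $4\pi^2 e/g$ and tends to $0$ at rate $1/g$.

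I do not anticipate any serious obstacle: the full content of the corollary lies in theorem \ref{minslope}, and the result is just the (initially counterintuitive) observation that the explicit bound there decays like $1/g$ rather than stabilizing at a positive constant, driven by the factorial growth in the denominator. To see that the limit is genuinely $0$ and not some negative value, one notes that $s(Eff(\A_g^*))\ge 0$: pairing any effective divisor $aL-bD$ with $b>0$ against a test curve $C\subset\A_g$ disjoint from the boundary gives $aL.C\ge 0$, and $L.C>0$ by ampleness of $L$ on $\A_g^S$, forcing $a\ge 0$. Thus the slope is squeezed between $0$ and a quantity going to $0$, and the claimed limit follows.
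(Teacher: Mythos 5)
Your proposal is correct and follows the same route as the paper: both extract $\sqrt[g]{g!}\sim g/e$ (via Stirling) and $\zeta(2g)\to 1$ to conclude that the bound from theorem \ref{minslope} is asymptotic to $(2\pi)^2 e/g\to 0$. Your additional remark that the slope is bounded below by $0$ (via a test curve in the interior) is a sensible touch that the paper leaves implicit, but it does not change the substance of the argument.
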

\begin{proof}[Proof of the corollary]
Indeed, we have $\lim\limits_{g\to\infty} \zeta(2g)=1$, and
$\sqrt[g]{g!}\sim g/e$, so for large $g$ the asymptotics of the
expression in the theorem is $\frac{(2\pi)^2e}{g}$, which tends to 0
as $g$ increases.
\end{proof}
\begin{proof}[Proof of the theorem]
The improvement of Tai's result is obtained by looking more
carefully at his dimension estimates. For convenience, we recall
Tai's notations and results.

Denote by $A_{g,k}$ the vector space of scalar modular forms on
$\A_g$ of weight $k(g+1)$. The reason for this notation is that
$A_{g,k}$ are forms in $kK_{\A_g}$, i.e.~$k$-pluricanonical forms.
Tai computes the asymptotics of the dimension for $g$ fixed and $k$
large (\cite{tai}, Proposition 2.1):
$$
  \dim A_{g,k}\sim 2^{\frac{(g-1)(g-2)}{2}}[k(g+1)]^{\frac{g(g+1)}{2}}
  \prod\limits_{j=1}^g \frac{(j-1)!}{(2j)!}B_j,
$$
where $B_j$ are the even Bernoulli numbers.

The slope of a modular form is its weight divided by the vanishing
order at the boundary. Thus Tai defines (page 429)
$\T_{g-1,m}^{k}(\ell)$ to be essentially the space of all possible
expansions of {\it weight $k(g+1)$} modular forms on $\A_g(\ell)$
near the boundary $\partial\A_g^*(\ell)$, vanishing to order $m$
along the boundary (this is somewhat confusing in \cite{tai} --- he
does not have the upper index $k$ in notations, which is important
for the computation). Such a boundary expansion determines the
modular form uniquely; more precisely $\dim
\T_{g-1,m}^{k}(\ell)^{\rm even}=\dim
H^0(\A_g^*(\ell),k(g+1)L-mD)$, where ``even'' means that we are
taking the even expansions, which are roughly one half of all
expansions.

Thus if for some $M$ we have $\dim A_{g,k}>\sum\limits_{m\le M}\dim
\T_{g-1,m}^{k}(1)$, it follows that there must exist a form in
$A_{g,k}$ with boundary vanishing order at least $M$ and thus slope
at most $\frac{k(g+1)}{M}$. One then estimates (\cite{tai}, Corollary 2.6)
$$
 \dim \T_{g-1,m}^{k}(1)\sim (2m)^{g-1}\dim A_{g-1,k}.
$$
Combining this with the formula for $\dim A_{g,k}$ and taking the
sum, we get (this is the last formula on page 431 of \cite{tai} ---
be warned that there $M=k$, and one needs to carefully retrace Tai's
computations to verify that on the right-hand-side one of the two
places $k$ appears it should now be $M$, while in the other it is
still $k$):
$$
 \sum\limits_{m\le M}\dim\T_{g-1,m}^k(1)^{\rm even}\sim
 2^{\frac{(g-2)(g-1)}{2}}\frac{M^g}{g}[k(g+1)]^{\frac{g(g-1)}{2}}
  \prod\limits_{j=1}^{g-1}\frac{(j-1)!}{(2j)!}B_j,
$$
for $k$ and $M$ large enough.

Finally, to show the existence of a modular form of slope $s$, we
need to have a modular form of weight $N:=k(g+1)$ (for $k$ very
large), vanishing at the boundary to order $M:=N/s$. Such a modular
form must exist if
$$
 1<\frac{\dim A_{\frac{N}{g+1}}}{\sum\limits_{m\le M}
 \dim\T_{g-1,m}^k(1)^{\rm even}}\sim
 \frac{2^{\frac{(g-1)(g-2)}{2}}N^{\frac{g(g+1)}{2}}
  \prod\limits_{j=1}^g \frac{(j-1)!}{(2j)!}B_j}
 {\frac{1}{g}2^{\frac{(g-2)(g-1)}{2}}\left(\frac Ns\right)^gN^{\frac{g(g-1)}{2}}
  \prod\limits_{j=1}^{g-1}\frac{(j-1)!}{(2j)!}B_j}
$$
$$
 =s^gB_g\frac{g!}{(2g)!}=s^g\frac{g!(2g)!2\zeta(2g)}{(2g)!(2\pi)^{2g}}=
 s^g\frac{g!2\zeta(2g)}{(2\pi)^{2g}},
$$
where we used the explicit formula for $B_g$ in terms of the zeta
function.

This inequality holds for
$$
  s>\frac{(2\pi)^2}{\sqrt[g]{g!}\sqrt[g]{2\zeta(2g)}},
$$
and thus there exist modular forms of this slope.
\end{proof}

\smallskip
Comparing the slope bound from theorem \ref{minslope} to $s(N_0^*)$,
we see that at least for $g\ge 13$ (and likely for smaller $g$ as
well) $N_0^*$ cannot be the effective divisor of the smallest slope. This
leaves the following important question wide open.
\begin{OP}
What is the slope of the cone $Eff(\A_g^*)$?
\end{OP}
Since the slope of $Eff(\overline{\M_4})=8.4$ is known, and
$\M_4\subset\A_4$ is codimension one, given by the Schottky modular
form of slope 8, it follows that $Eff(\A_4^*)=Eff(\overline{\A_4}^P)=\lbrace aL-bD \mid a\ge
8b\ge 0\rbrace$ has slope 8. However, already $Eff(\A_5^*)$ is not
known. Oura, Poor and Yuen \cite{poor} have been studying this
question from the point of view of code polynomials etc., but a
complete answer still seems beyond reach.
\begin{rem}
It is very interesting to compare what we know about the slopes of
the effective cones of $\overline{\M_g}$ and $\A_g^*$. A
long-standing slope conjecture for $\M_g$ predicted that the
Brill-Noether divisor had minimal slope, and these slopes tended to
$6$ as $g$ went to infinity. The slope conjecture was disproven by
Farkas and Popa \cite{fapo}, and divisors of smaller slopes have been
constructed by them and by Farkas \cite{farkas}, \cite{fa22}.
However, all of these have slope at least 6, while it is not even
clear whether there exists a genus-independent lower bound for slopes
of effective divisors on $\overline{\M_g}$.

By the above theorem, there is no such bound for $\A_g^*$, and it is
tempting to try to apply techniques similar to Tai's to
$\M_g\subset\A_g$ to prove that there is no such bound for $\M_g$,
either. Since the dimension count in theorem \ref{minslope} produces
effective divisors on $\A_g$ of slope smaller than
$6.5=s(K_{\overline{\M_g}})$ for $g\ge 14$, and since in this range
(except for $\M_{14}$, which is known to be unirational \cite{ve2})
the Kodaira dimension $\kappa(\overline{\M_g})$ is not always known,
it would also be very interesting to try to use Tai's
dimension-counting techniques to approach this computation, but this
also seems hard.
\end{rem}
\begin{rem}
There is also a very curious coincidence: the slope of the
Brill-Noether divisor on $\overline{\M_g}$ is equal to
$6+\frac{12}{g+1}$, the same as the slope of $N_0$ on $\A_g^*$. For
$g\ge 4$ under the Torelli map we have $\M_g\subset N_0$, but since
$\M_g\subset\A_g$ is of high codimension for $g$ large, so far this
equality of slopes seems to be just a numerical coincidence, though
a very strange one. Finding a reason for it, if there is one, could
shed more light on the relationship of $Eff(\overline{\M_g})$ and
$Eff(\A_g^*)$, and perhaps on the geometry of the Schottky problem.
\end{rem}

\section{Homology and Chow rings: intersection theory on $\A_g$}
Having discussed the birational geometry, i.e.~divisors, in the
previous section, we now review the progress made in understanding
the higher-dimensional cohomology and Chow rings of $\A_g$ and
compactifications, and the intersection theory.

\begin{df}
In $\Pic_\Q(\A_g)$ we had one natural class --- the Hodge line bundle
$L=\det \E$. Similarly, the most natural homology or Chow classes on
$\A_g$ are the {\it Hodge classes}, i.e.~the Chern classes of the
Hodge bundle
$$
 \ll_i:=c_i(\E).
$$
\end{df}
The cohomology of the open space $\A_g$ is the same as the group
cohomology of $\Sp$, and a lot is known about it. Notice that
choosing $[A]\in\A_h$ gives a natural embedding
$\A_g\hookrightarrow\A_{g+h}$, by taking the Cartesian product with
$A$. All of these embeddings are homotopic, and thus one can talk of
the stable cohomology of $\A_g$. In comparison, for $\M_g$ there is
no natural map $\M_g\hookrightarrow\M_{g+h}$, as taking the product
with a fixed curve of genus $h$ gives reducible stable curves, which
lie in $\partial\overline{\M_{g+h}}$ --- so for $\A_g$ we get an analog of
Harer's stability for free. The stable cohomology of $\A_g$, the
same as that of $\Sp$, has been computed much before the recent proof
by Madsen and Weiss \cite{mawe} of Mumford's conjecture on the
stable cohomology of $\M_g$.
\begin{thm}[Borel, see \cite{knudson} for an exposition]
The stable cohomology ring of $\A_g$ is freely generated by a class
in each dimension $4k+2$, i.e.~for any fixed $n$ there exists a
$G(n)$ (some explicit formula for $G(n)$ is actually known) such
that for all $g>G(n)$ the cohomology ring $H^*_\Q(\A_g)$ in
dimensions $\le n$ is the free algebra generated by the odd Hodge
classes $\ll_1,\ll_3,\ll_5,\ldots$.
\end{thm}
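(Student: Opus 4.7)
The plan is to identify the rational cohomology of $\A_g$ with the group cohomology of $\Sp$ and then appeal to Borel's computation of the stable cohomology of arithmetic lattices. Since $\H_g$ is contractible and $\Sp$ acts on it properly with finite stabilizers, $\A_g$ is a rational classifying space for $\Sp$, so $H^*(\A_g,\Q)=H^*(\Sp,\Q)$. Fixing $[B]\in\A_h$ and sending $[A]\mapsto[A\times B]$ gives a map $\A_g\hookrightarrow\A_{g+h}$ induced by the block-diagonal inclusion of symplectic integer groups; stability at the topological level will follow once it is established at the level of group cohomology.

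Borel's theorem supplies the main input: in a range $*<G(g)$ that grows linearly with $g$, the natural comparison map from relative Lie algebra cohomology
$$
H^*(\mathfrak{sp}(2g,\R),U(g);\R)\longrightarrow H^*(\Sp,\R)
$$
is an isomorphism. By Chern--Weil the left-hand side is represented by $Sp(2g,\R)$-invariant closed differential forms on the non-compact symmetric space $Sp(2g,\R)/U(g)$, and these are in canonical bijection with invariant forms on its compact dual $X_u:=Sp(2g)/U(g)$, the Lagrangian Grassmannian. Thus in the stable range $H^*(\A_g,\Q)\cong H^*(X_u,\Q)$.

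It then remains to compute $H^*(X_u,\Q)$ and identify the generators with Hodge classes. The tautological Lagrangian subbundle $S$ on $X_u$ satisfies $\underline{\C^{2g}}/S\cong S^\vee$ via the symplectic form, giving $c(S)\cdot c(S^\vee)=1$. Since $c_i(S^\vee)=(-1)^ic_i(S)$, the odd-degree components of this identity are automatic, while the even-degree ones express each $c_{2k}(S)$ as a polynomial in lower Chern classes; a Schubert basis argument shows no further relations appear in the stable range, so stably $H^*(X_u,\Q)=\Q[c_1(S),c_3(S),c_5(S),\ldots]$. Under Borel's identification the tautological bundle corresponds, up to duality, to the Hodge bundle $\E$ --- both arise by Chern--Weil from the defining representation of $U(g)$ --- so the generators match $\ll_{2k+1}=c_{2k+1}(\E)$, lying in cohomological degree $2(2k+1)=4k+2$, precisely as claimed.

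The technical heart of the argument, and the main obstacle, is Borel's theorem itself: both the comparison isomorphism and the quantitative bound $G(n)$ rest on a delicate analysis of square-integrable harmonic forms on $Sp(2g,\R)/U(g)/\Sp$, the vanishing of the cuspidal and Eisenstein contributions in a range, and the van Est-type regulator map. Granted that analytic input, the remaining steps --- Chern--Weil, the short exact sequence of the tautological subbundle on $X_u$, and Schubert calculus --- are essentially formal, and the matching of the abstract generators with the odd Hodge classes $\ll_{2k+1}$ is forced by the way $\E$ arises from the standard representation of $U(g)$.
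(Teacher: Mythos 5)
Your outline is the standard derivation of this result, which the paper does not prove but simply attributes to Borel with a pointer to \cite{knudson}: one identifies $H^*_\Q(\A_g)$ with the group cohomology of $\Sp$, invokes Borel's stability theorem to reduce to invariant forms on the compact dual $Sp(g)/U(g)$, and reads off the stable polynomial algebra on the odd Chern classes of the tautological (= Hodge) bundle from the relation $c(S)c(S^\vee)=1$ --- consistent, as it should be, with the tautological relation (\ref{chowrel}) quoted later in the paper. The only caveat is that the analytic core (Borel's comparison and vanishing theorems) is assumed rather than proved, but that is precisely the content being cited, so your reduction is correct and matches the approach of the references.
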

In comparison, the stable cohomology of $\M_g$ is generated by a
class in every even dimension, i.e.~while the classes $\ll_{2k}$ on
$\A_g$ are expressible algebraically in terms of $\ll_{2k+1}$ (and this relation of course also holds over $\M_g$), on $\M_g$ there are also stably algebraically independent from $\ll$'s Miller-Morita-Mumford's classes $\kappa_{2k}$.

Moreover, for $\A_g$ there exist also product maps for
compactifications: $\A_g^S\times\A_h^S \to\A_{g+h}^S$,
$\overline{\A_g}^P\times\overline{\A_h}^P \to\overline{\A_{g+h}}^P$,
and $\overline{\A_g}^V\times\overline{\A_h}^V
\to\overline{\A_{g+h}}^V$. Thus we are naturally led to ask whether
the stable homology can be computed for various compactifications of
$\A_g$. The answer is in fact known for the Satake compactification.
\begin{thm}[Charney and Lee \cite{chle}]
The stable homology ring of $\A_g^S$ is freely generated by the odd
Hodge classes $\ll_{2k+1}$, for $k\ge 0$, and some other classes
$\alpha_{2k+1}$, for $k\ge 1$.
\end{thm}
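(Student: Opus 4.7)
The plan is to model $\A_g^S$ up to rational homotopy by a homotopy colimit of classifying spaces of arithmetic groups, and then apply Borel's computation stratum by stratum. First, I would recall that in the stable range $H^*(\A_g;\Q) = H^*(B\Sp(2g,\Z);\Q)$ is the free polynomial algebra on the odd Hodge classes $\ll_{2k+1}$ in degrees $4k+2$, with stability provided by the product maps $\A_g\times \A_h\to\A_{g+h}$. The analogous product maps $\A_g^S\times\A_h^S\to\A_{g+h}^S$ supply stability for the compactification, so it suffices to exhibit a set of generators compatible with this stabilization.

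The key technical step is to assemble $\A_g^S$ from its strata $\A_0,\A_1,\ldots,\A_g$ via the poset of rational parabolic subgroups of $\Sp(2g,\Z)$ -- equivalently, the poset of isotropic flags in $\Q^{2g}$. This presents $\A_g^S$ (rationally, in the stable range) as a space glued from the $B\Sp(2i,\Z)$'s along maps arising from parabolic inclusions, and it produces a Leray-type spectral sequence converging to $H^*(\A_g^S;\Q)$. On the $E_2$ page each stratum contributes its Borel polynomial algebra, while the gluing is governed by the rational cohomology of the relevant Tits building; by Solomon--Tits this building is rationally a wedge of spheres, and the transgressions across it are what will produce the new generators $\alpha_{2k+1}$. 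One expects these to arise as the pullbacks of odd-degree generators coming ultimately from $H^*(U;\Q)$ of the compact form of a Levi factor.

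The hard part will be pinning down the differentials well enough to see that the spectral sequence degenerates in the stable range and that the only new classes it creates are the $\alpha_{2k+1}$ indexed by $k\geq 1$. I would attack this by a Poincar\'e series comparison: compute the stable Poincar\'e series of $\A_g^S$ from the spectral sequence, and match it against that of the free graded-commutative $\Q$-algebra on $\{\ll_{2k+1}\}_{k\geq 0}\cup\{\alpha_{2k+1}\}_{k\geq 1}$. Equality of Poincar\'e series, combined with the fact that the $\ll_{2k+1}$ and $\alpha_{2k+1}$ are primitive for the stabilization (forced by multiplicativity of the product maps), yields both algebraic independence and the claimed freeness. A subsidiary check is that the $\alpha_{2k+1}$ live only in the Satake boundary, i.e.\ restrict to zero on $\A_g$, so that they genuinely enlarge Borel's answer rather than duplicate it.
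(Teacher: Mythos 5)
The paper offers no proof of this statement: it is a survey item quoted directly from Charney--Lee \cite{chle}, so the only fair comparison is with their published argument. Your outline is in the right spirit -- stratify by the boundary components, use Borel's computation on each stratum, and locate the new generators in the Levi factors of the Siegel parabolics -- but as written it has two genuine gaps. First, the Leray-type spectral sequence of the stratification $\A_g^S=\A_g\sqcup\A_{g-1}\sqcup\cdots$ does not have ``each stratum contributing its Borel polynomial algebra'': its input is the compactly supported cohomology $H_c^*(\A_i;\Q)$ of the open strata (equivalently, cohomology of closed strata with nontrivial coefficients coming from the links), and $H_c^*(\A_i;\Q)$ neither stabilizes in the naive sense nor is known; Borel's theorem computes ordinary cohomology only. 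The local structure of $\A_g^S$ near a boundary stratum also involves the unipotent radical of the parabolic, not just the Tits building, so Solomon--Tits alone does not govern the gluing. Second, the Poincar\'e series comparison is circular: to match the spectral sequence against the free algebra on $\{\ll_{2k+1}\}\cup\{\alpha_{2k+1}\}$ you must already know the stable Poincar\'e series of $\A_g^S$, which is precisely the content of the theorem.

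What Charney and Lee actually do is homotopy-theoretic rather than sheaf-theoretic: they model the stable Satake space, rationally, as the classifying space of a category built from the rational boundary components, show via the product maps $\A_g^S\times\A_h^S\to\A_{g+h}^S$ that it is an $H$-space (indeed an infinite loop space after plus construction), and identify it as a product of $B\operatorname{Sp}(\infty,\Z)^+$ with a factor governed by the stable cohomology of $GL(\Z)$, i.e.\ by $K_*(\Z)\otimes\Q$. This is also where the indexing $k\ge1$ comes from, and where your sketch points at the wrong input: the compact form $U(r)$ of the Levi would give exterior classes in every odd degree $1,3,5,\dots$, whereas Borel's theorem for the arithmetic group $GL_r(\Z)$ gives classes only in degrees $5,9,13,\dots$, which after the degree shift in the gluing land exactly in degrees $4k+2$ with $k\ge1$. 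Freeness then follows from the Hopf algebra structure and Milnor--Moore, which is the rigorous version of your primitivity remark. So the skeleton of your plan can be repaired, but the spectral sequence step and the Poincar\'e series step must be replaced by the homotopy colimit model and an independent identification of the boundary factor with $K$-theory of $\Z$.
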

It appears that the classes $\alpha$ may not be algebraic, but the algebraic geometry interpretation of this result is still now know. The stable homology of toroidal compactifications is completely unknown.
\begin{OP} What are the
stable homology rings, or maybe Chow rings, if this makes sense, of
$\AP$ and $\AV$?
\end{OP}
We thank Nicholas Shepherd-Barron for discussions relating to this
question, drawing our attention to \cite{chle}, and telling us about
the following considerations.

These cohomology rings could be understood as the cohomology rings of
the corresponding inductive limits $\overline{\A_\infty}^P$ and
$\overline{\A_\infty}^V$ --- these actually exist in the appropriate
monoid category, but their topology may depend on the choice of the
base point for the embedding $\A_g\hookrightarrow\A_{g+1}$. Moreover,
the cohomology of ind-limits is naturally a graded Hopf algebra, and
thus by a theorem of Milnor and Moore \cite{mimo} is a product of a
polynomial ring and an exterior algebra. We note that the number of
irreducible boundary components of $\AV$ grows unboundedly as $g$
grows, as thus the stable homology of $\AV$ may only exist in some
sense in which similarly the stable homology of the Deligne-Mumford
compactification $\overline{\M_g}$ could exist.

The topic of stable modular forms (i.e.~the structure of the limit
$\A_\infty^S$) has been studied at least since the work of Freitag
\cite{freitagstable}. The space $\overline{\A_\infty}^P$ is of
interest in particular due to the work of Shepherd-Barron: one can
try to think of it as the universal canonical model for $\A_g$ in
some sense.

We also note that there do not exist any ``stable compactly
supported'' cohomology classes, i.e.~there cannot exist families of
complete subvarieties of $\A_g$ of the same codimension for all $g$
--- by theorem \ref{oort} below the codimension of a complete
subvariety of $\A_g$ must be more than $g$.

\begin{dsc}[{\bf Tautological ring}]

Analogously to the case of $\M_g$ (see \cite{tommasi} for the case of
$\M_4$ and \cite{grpa} for the case of $\M_{1,11}$), there may exist cohomology classes in $\A_g$ not lying in the algebra
generated by the Hodge classes. There has been much progress for
$\overline{\M_g}$ in studying the {\it tautological ring} --- the
subring of the Chow generated by the naturally defined classes; one
major goal being proving Faber's conjecture \cite{fabertaut}. The
tautological ring can also be studied for $\A_g$ --- one simply
considers the subring of the Chow generated by the Hodge classes
$\ll_i$. This has been determined entirely.
\end{dsc}
\begin{thm}[van der Geer \cite{vdgeer1} for $\A_g$, Esnault and Viehweg
\cite{esvi} for a compactification ] For an appropriate toroidal
compactification the tautological subring of
$CH_\Q^*(\overline{\A_g})$ generated by the Hodge classes has only
one relation:
\begin{equation}\label{chowrel}
 (1+\ll_1+\ll_2+\ldots+\ll_g)(1-\ll_1+\ll_2-\ldots+(-1)^g\ll_g)=1.
\end{equation}
The tautological subring of $CH_\Q^*(\A_g)$ has one more relation:
$\ll_g=0$.
\end{thm}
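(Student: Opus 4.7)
The plan has three stages: derive the single relation (\ref{chowrel}) from Hodge theory on the universal family; show no further relation appears on a smooth toroidal compactification; and show that the extra vanishing $\ll_g=0$ on $\A_g$ is a genuinely new relation. \emph{Step 1.} Over $\A_g$ the rank-$2g$ local system $V:=R^1\pi_*\Q$ is flat, so its associated holomorphic bundle $V\otimes_\Q\mathcal{O}_{\A_g}$ has trivial rational Chern classes. The Hodge filtration gives the short exact sequence $0\to\E\to V\otimes\mathcal{O}\to R^1\pi_*\mathcal{O}_{\X_g}\to 0$, and Serre duality on the abelian-variety fibers (whose relative dualizing sheaf is trivial) identifies $R^1\pi_*\mathcal{O}_{\X_g}$ with $\E^\vee$. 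Multiplicativity of the total Chern class under extensions then yields $c(\E)\cdot c(\E^\vee)=1$, which is precisely (\ref{chowrel}) since $c_i(\E^\vee)=(-1)^i\ll_i$. To propagate the relation to a smooth toroidal compactification $\overline{\A_g}$, pass to a smooth level cover so that monodromy around boundary divisors becomes unipotent, replace $\E$ by Mumford's canonical extension and $V\otimes\mathcal{O}$ by Deligne's logarithmic extension, and use nilpotence of the residues of the log connection to conclude that the Chern classes of the extended bundle still vanish, so (\ref{chowrel}) holds in $CH^*_\Q(\overline{\A_g})$.

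\emph{Step 2.} Following Esnault-Viehweg, I next show that no other relation holds on $\overline{\A_g}$. In codimension $2k$ the relation (\ref{chowrel}) reads $2\ll_{2k}=(\text{polynomial in }\ll_1,\dots,\ll_{2k-1})$, while the odd-degree identities are automatic. Hence the abstract ring $R_g$, defined as $\Q[\ll_1,\dots,\ll_g]$ modulo the ideal generated by (\ref{chowrel}), is generated by the odd Hodge classes $\ll_{2k+1}$ with $2k+1\le g$, and the task becomes showing that the natural surjection of $R_g$ onto the tautological ring of $\overline{\A_g}$ is injective. I would compute on a log smooth toroidal compactification using logarithmic characteristic classes, and match graded dimensions of $R_g$ with the tautological subring of $CH^*_\Q(\overline{\A_g})$, either via explicit independent test cycles (products of lower-dimensional moduli paired with monomials in the odd $\ll_i$) or by invoking that the $\ll_i$ are pulled back from the classifying stack $B\Sp$, on which the only universal relation among the Hodge Chern classes is (\ref{chowrel}).

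\emph{Step 3.} For the additional vanishing $\ll_g=0$ on the open $\A_g$, I would follow van der Geer and exhibit a representative of $\ll_g$ in $CH^g_\Q(\overline{\A_g})$ supported on $\partial\overline{\A_g}$. The comparison of the canonical extension $\bar\E$ with $\E|_{\A_g}$ produces a difference $c_g(\bar\E)-c_g(\E)$ expressible in boundary classes (via the Chern-character formula for log extensions with nilpotent residues); combining this with the extended relation (\ref{chowrel}) forces $c_g(\E)|_{\A_g}=0$. To see that this is genuinely a new relation, one checks directly from the presentation of $R_g$ that $\ll_g$ is nonzero there --- the even-degree relations in (\ref{chowrel}) express $\ll_{2k}$ as a nonzero polynomial in the odd $\ll_i$, so $\ll_g$ itself represents a nonzero class in degree $g$ of $R_g$.

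\emph{Main obstacle.} Step 2 is where the substantive work lies: Step 1 is a formal Hodge-theoretic computation and Step 3 reduces to a boundary analysis, but proving the \emph{absence} of further relations in $CH^*_\Q(\overline{\A_g})$ is genuinely hard. The boundary of any toroidal compactification is combinatorially intricate, and one must rule out relations of the form ``polynomial in $\ll_i$'s equals a boundary class'' that would secretly impose extra identities among the $\ll_i$ alone.
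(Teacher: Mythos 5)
First, a caveat: the survey contains no proof of this theorem --- it is quoted from van der Geer and Esnault--Viehweg --- so your proposal can only be measured against those sources. Your architecture (the weight-one Gauss--Manin bundle and its logarithmic extension give the relation; one then shows no further relations; $\ll_g=0$ on the open part comes from a boundary-supported representative) does match the shape of the actual proofs, but two of your steps quietly assume exactly the hard content of the cited papers.

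The first gap is in Step 1. Flatness of $R^1\pi_*\Q$ gives the vanishing of the rational Chern classes of $V\otimes\mathcal{O}$ in singular or de~Rham cohomology (Chern--Weil), but the theorem is stated in $CH^*_\Q$, where ``flat implies trivial Chern classes'' is not a formal fact: for general flat bundles it is an open conjecture of Bloch. Establishing $c(\E)\,c(\E^\vee)=1$ in the Chow ring, and for the canonical extensions on a toroidal compactification, is precisely the theorem of Esnault and Viehweg, whose proof uses the specific geometry of (semi)abelian schemes rather than just nilpotence of residues; so your ``formal Hodge-theoretic computation'' proves (\ref{chowrel}) in cohomology, not where it is claimed. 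The second gap is Step 2: pulling the $\ll_i$ back from $B\Sp$ can only produce relations, never certify their absence, and ``matching graded dimensions via test cycles'' is not yet an argument. The standard way to close this step is to note that $R_g=\Q[\ll_1,\dots,\ll_g]/(\ref{chowrel})$ is a finite-dimensional Gorenstein ring (isomorphic to the cohomology of the compact dual $\operatorname{Sp}(2g)/U(g)$) with one-dimensional socle in degree $g(g+1)/2$ generated by $\ll_1^{g(g+1)/2}$ up to a nonzero scalar; since $\ll_1^{g(g+1)/2}=L^{g(g+1)/2}\neq 0$ on $\overline{\A_g}$ (by Hirzebruch--Mumford proportionality, or because $L$ is big and nef), any additional relation would kill the socle and hence cannot exist. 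Your Step 3 is right in spirit --- $\ll_g$ is nonzero in $R_g$, so its vanishing on $\A_g$ is genuinely a new relation --- though the cleanest route to $\ll_g=0$ there is Grothendieck--Riemann--Roch for $\pi:\X_g\to\A_g$ applied to $\mathcal{O}_{\X_g}$: the Borel--Serre identity gives $c_g(\E)\operatorname{td}(\E)^{-1}=\pi_*\left(\operatorname{td}(\pi^*\E^\vee)\right)=\operatorname{td}(\E^\vee)\cdot\pi_*(1)=0$, whence $c_g(\E)=0$.
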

Writing out all the terms of relation (\ref{chowrel}), we can
immediately see that the even Hodge classes are expressible in terms
of the odd Hodge classes. For example equating to zero the $CH^2$
term gives $2\ll_2=\ll_1^2$, the $CH^4$ term gives
$\ll_4=2\ll_1\ll_3-\ll_2^2=2\ll_1\ll_3-\frac14 \ll_1^4$, etc.

Note that the above equalities are in $CH_\Q^*(\A_g)$, and thus one
can wonder what happens in $CH_\Z^*$. The torsion of $\ll_g\in
CH_\Z^*(\A_g)$, and subvarieties representing it on the
compactification (since $\ll_g$ is zero on $\A_g$, it defines some
subvariety of the boundary) were studied by Ekedahl and van der Geer
\cite{ekvdg1}, \cite{ekvdg2}. It is interesting to compare this to
the recent work of Galatius, Madsen and Tillmann \cite{tillmann} on
the divisibility of the tautological classes on $\M_g$.

The full homology and Chow rings (as opposed to just the
tautological subring) were computed for $\A_g$ for $g\le 3$. The
results for genera 1 and 2 are classical, and the same as for the
moduli space of curves. For genus 3 we have the following two
computations.
\begin{thm}[Hain \cite{hain}]
The dimensions of the rational cohomology groups for $\A_3$ and its
Satake compactification $\A_3^S$ are
\begin{center}
\begin{tabular}{|c|rrrrrrr|}
\hline
n\ &\ 0&\ 2&\ 4&\ 6&\ 8&10&12\\
\hline
$\vphantom{\vbox{\vskip13pt}}$ $\dim H^n_\Q(\A_3)$&1&1&1&2&0&0&0\\
$\vphantom{\vbox{\vskip13pt}}$ $\dim H^n_\Q(\A_3^S))$&1&1&1&2&1&1&1\\
\hline
\end{tabular}\ ,
\end{center}
while the homology in all odd dimensions is zero. Moreover, the space $H^6(\A_3)$ is described explicitly as a mixed Hodge structure.
\end{thm}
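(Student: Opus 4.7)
The plan is to use the fact that for $g=3$ the Torelli map dominates $\A_3$: since the Schottky problem has the trivial answer in genus 3, every indecomposable ppav is a Jacobian of a smooth curve. Writing $U \subset \A_3$ for the open substack of indecomposable ppavs and $Z := \A_3 \setminus U$ for the closed (codimension-$2$) substack of decomposable ones, the Torelli map factors as $\tau\colon \M_3 \twoheadrightarrow U$, which is an isomorphism of stacks away from the hyperelliptic locus and identifies the hyperelliptic involution with $-1$ on the hyperelliptic locus (so $\tau$ is a degree $\tfrac{1}{2}$ map of stacks there). This gives two computational handles on $\A_3$.

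First I would compute $H^*(Z,\Q)$ via the K\"unneth formula from the classical rational cohomologies of $\A_1$ and $\A_2$ (noting that $Z$ is the image of $\A_1\times\A_2$ in $\A_3$). Next I would compute $H^*(U,\Q)$ from the known cohomology of $\M_3$ due to Looijenga: on rational cohomology the Torelli map identifies $H^*(U)$ with a suitable combination of invariants of $H^*(\M_3)$ under the stacky identification on the hyperelliptic locus. Finally I would run the Gysin long exact sequence associated to the closed immersion $Z\hookrightarrow\A_3$,
\begin{equation*}
 \cdots \to H^{k-4}(Z) \to H^k(\A_3) \to H^k(U) \to H^{k-3}(Z) \to \cdots,
\end{equation*}
and perform the dimension bookkeeping to extract the table $(1,1,1,2,0,0,0)$. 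Vanishing in odd degrees then follows from the odd-degree vanishing of the input pieces. For the Satake compactification I would use the filtration $\A_3^S \supset \A_2^S \supset \A_1^S \supset \A_0$ inductively: the local cohomology of each boundary stratum is controlled by a maximal parabolic subgroup of $\operatorname{Sp}(6,\Z)$, and a Leray/Borel-Serre type spectral sequence combines the known $H^*(\A_2^S)$ and $H^*(\A_1^S)$ with the just-computed $H^*(\A_3)$ to produce the right-hand column of the table.

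The hard part will be the mixed Hodge structure on $H^6(\A_3)$, the one genuinely non-tautological degree, where the theorem promises an explicit description. Dimension counting through the Gysin sequence yields only the rank $2$; identifying the weight filtration and determining which piece is algebraic requires tracking mixed Hodge structures through every term of the long exact sequence (in particular the variation of Hodge structure on $Z$), and likely bringing in an explicit boundary Eisenstein-type class arising from the arithmetic group structure $\A_3 = \H_3/\operatorname{Sp}(6,\Z)$.
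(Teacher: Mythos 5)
The survey states this theorem with only a citation to \cite{hain} and contains no proof, so the relevant comparison is with Hain's cited argument --- and your outline is essentially his: stratify $\A_3$ into the Jacobian locus and the decomposable locus, import Looijenga's computation of $H^*(\M_3;\Q)$ through the Torelli map, run Gysin sequences, and treat $\A_3^S$ by induction over the Satake filtration. (For the second step note that with $\Q$-coefficients $H^*(U)\cong H^*(\M_3)$ outright, since rational cohomology of a Deligne--Mumford stack is that of its coarse space and $\M_3\to U$ is a bijective finite morphism of normal coarse spaces --- no invariants need to be extracted; also, Torelli is an isomorphism of stacks over the hyperelliptic locus and ``degree $1/2$'' over the non-hyperelliptic one, the opposite of what you wrote, though this is immaterial rationally.) Two places need tightening. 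First, $Z$ is the \emph{image} of $\A_1\times\A_2$, not the product: the map $\A_1\times\A_2\to Z$ identifies the three splittings of a product of three elliptic curves, so it is finite and generically injective but not an isomorphism; your K\"unneth step therefore computes $H^*(\A_1\times\A_2)$ rather than $H^*(Z)$, and the Gysin sequence in the form you wrote it presumes $Z$ is a $\Q$-homology manifold. The clean fix --- and what Hain does --- is the two-step stratification $\A_3\supset Z\supset\operatorname{Sym}^3\A_1$ with smooth orbifold strata, assembled in compactly supported cohomology and converted by Poincar\'e duality; in the present case the discrepancy happens not to change any Betti number, but that must be verified, not assumed. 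Second, a warning about the input: the table forces $H^4(\M_3;\Q)=0$ (equivalently $\lambda_1^2$ dies on $\M_3$, consistent with Looijenga's $b_\bullet(\M_3)=(1,0,1,0,0,0,1)$); if one instead feeds in a nonzero $H^4(\M_3)$, your exact sequence outputs $\dim H^4(\A_3)=2$, so the bookkeeping is sensitive to this vanishing. Your reading of the final assertion is correct: $H^6(\A_3)$ sits in a short exact sequence with the weight-$12$ class $H^6(\M_3)\cong\Q(-6)$ as quotient and the Gysin image of $H^2(Z)\cong\Q(-1)$ (hence a $\Q(-3)$, weight $6$) as sub, and Hain's ``explicit description as a mixed Hodge structure'' is precisely the identification of these weight-graded pieces and of the resulting extension.
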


\begin{thm}[van der Geer \cite{vdgeer2}]
The Chow groups of $\overline{\A_3}$ (which are actually equal to the cohomology, though this is not a priori clear) have the following dimensions
\begin{center}
\begin{tabular}{|c|rrrrrrr|} \hline
n&\ 0&\ 1&\ 2&\ 3&\ 4&\ 5&\ 6\\
\hline
$\vphantom{\vbox{\vskip13pt}}$ $\dim CH^n_\Q(\overline{\A_3})$&1&2&4&6&4&2&1\\
\hline
\end{tabular}\ .
\end{center}
\end{thm}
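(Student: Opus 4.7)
The strategy is to compute $CH^*_\Q(\overline{\A_3})$ via the stratification coming from the contraction to the Satake compactification $\A_3^S=\A_3\sqcup\A_2\sqcup\A_1\sqcup\A_0$, computing the Chow group of each stratum and then gluing via excision. In genus $3$ the perfect cone and second Voronoi compactifications coincide, so $\overline{\A_3}$ is unambiguous, and each stratum lying over $\A_i\subset\A_3^S$ has an explicit description (coming from the cone decomposition) as a torus fibration over a quotient of the fiberwise $(3-i)$-th power of the universal family $\X_i\to\A_i$. This reduces the computation to known Chow rings and standard projective-bundle techniques.

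The open stratum contribution is controlled by the tautological ring theorem of van der Geer and Esnault--Viehweg together with Hain's calculation of $H^*(\A_3)$. For $\A_3$ the tautological ring is generated by $\ll_1,\ll_2,\ll_3$ modulo
$$(1+\ll_1+\ll_2+\ll_3)(1-\ll_1+\ll_2-\ll_3)=1$$
and $\ll_3=0$, yielding tautological dimensions $(1,1,1,1,0,0)$ in codimensions $0,\dots,5$; Hain's computation shows that $H^*(\A_3)$ has dimensions $(1,1,1,2,0,0)$, adding one extra non-tautological class in the middle. Similarly $\A_2$ and $\A_1$ have straightforward Chow rings. For each boundary stratum $S_i$ lying over $\A_{3-i}$, the Chow ring can be built from Chern classes of the Hodge bundle, the universal theta bundle, and the Poincar\'e bundle on $\X_{3-i}$, using the explicit toric combinatorics of the perfect cone decomposition. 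One then applies excision inductively from the deepest stratum outward:
$$CH^*(\overline{S_i})\to CH^*\!\left(\overline{\A_3}\setminus\bigcup_{j>i}\overline{S_j}\right)\to CH^*\!\left(\overline{\A_3}\setminus\bigcup_{j\ge i}\overline{S_j}\right)\to 0,$$
reconstructing $CH^*_\Q(\overline{\A_3})$ stratum by stratum.

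The main obstacle is controlling the kernels of these pushforwards: one must determine precisely which boundary-supported classes become equal or vanish in $\overline{\A_3}$. This requires intersection-theoretic computations in the spirit of the pushforward formulas for $\T^{g+1}$ and $\T^{g}L$ cited earlier in this survey, as well as an explicit analysis of how $\ll_1$ and the boundary divisor $D$ restrict to each boundary stratum and how the strata intersect one another. A useful consistency check throughout is Poincar\'e duality, which holds here because $\overline{\A_3}$ has only finite quotient singularities (and admits a smooth finite level cover): the claimed symmetry $\dim CH^n_\Q(\overline{\A_3})=\dim CH^{6-n}_\Q(\overline{\A_3})$ of the table must be respected at every stage. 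Finally, the parenthetical identification of Chow with cohomology would follow from checking that every stratum carries only Tate Hodge structures --- true for $\A_i$ with $i\le 2$ and for the toric pieces, and needing only the identification of Hain's extra middle class with an algebraic cycle --- so that the cycle class map is an isomorphism on each graded piece.
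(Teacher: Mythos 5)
This survey does not prove the theorem; it only states van der Geer's result and cites \cite{vdgeer2}. Your plan --- stratify $\overline{\A_3}$ by torus rank (equivalently, by the fibers over the Satake strata $\A_{3-i}$), compute the Chow groups of the strata from their description as torus-bundle quotients over fiberwise powers of universal families, and assemble via excision --- is indeed the skeleton of van der Geer's actual argument, so the approach is the right one. Two corrections to the setup: the locus over $\A_{3-i}$ is not a single torus fibration over $\X_{3-i}^{\times i}$ but a finite union of locally closed strata indexed by $GL(i,\Z)$-orbits of cones in $\overline{C}(\Z^i)$, each a torus bundle (of rank depending on the cone) modulo the finite stabilizer of that cone --- over $\A_0$ in particular there are several such orbits, and this combinatorics is where the numbers $6$ and $4$ in the middle of the table come from. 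Also, the aside about Hain's extra class in $H^6(\A_3)$ is off target: that class spans a copy of $\Q(-6)$, i.e.\ has weight $12$, so it is not in the image of $H^6(\overline{\A_3})\to H^6(\A_3)$, is not algebraic, and plays no role in the computation on the compactification.

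The genuine gap is that excision only gives right-exact sequences, so your procedure produces generators and hence \emph{upper} bounds for $\dim CH^n_\Q(\overline{\A_3})$; nothing in the sketch supplies the matching lower bounds, and ``controlling the kernels of the pushforwards'' is precisely where essentially all of the work lies. The way this is actually done is to exhibit a nondegenerate intersection pairing between the candidate generators of $CH^n_\Q$ and $CH^{6-n}_\Q$ (computing enough intersection numbers of the tautological and boundary classes, and using the degree-two extended Torelli map $\overline{\M_3}\to\overline{\A_3}$ together with Faber's computation of $CH^*_\Q(\overline{\M_3})$ to detect independence). Poincar\'e duality for the Chow groups cannot be invoked as a consistency check before this is done --- for Chow groups, unlike rational cohomology of a space with finite quotient singularities, it is not automatic, and here it is a consequence of the computation (and of the resulting identification of Chow with cohomology) rather than an input to it.
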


\smallskip
In fact van der Geer describes the generators of all the Chow groups
and the entire ring structure. While it seems very hard to describe
the entire Chow ring in higher genera, one result that could
potentially be generalized is the intersection theory of divisors,
as we know that $\Pic_\Q(\AP)$ is always two-dimensional. For genus
3 the numbers are

\begin{thm}[van der Geer \cite{vdgeer2}]
The intersection numbers of divisors on $\overline{\A_3}$ are
\end{thm}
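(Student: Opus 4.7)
The plan is to compute all seven intersection numbers $L^{6-k}D^k$ for $k=0,\ldots,6$, which by $\Pic_\Q(\overline{\A_3})=\Q L\oplus\Q D$ (two-dimensional, since in genus $3$ the partial and perfect-cone Picard groups coincide) and $\dim\overline{\A_3}=6$ completely determine the intersection pairing of divisors. A useful simplification is that for $g=3$ all the toroidal compactifications introduced above coincide, so the computation is unambiguous.

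First I would compute $L^6$ directly. Since $L$ is pulled back from $\A_3^S$ and is big and nef, the top self-intersection equals $\int_{\overline{\A_3}} c_1(L)^6$, and this is evaluated by Hirzebruch--Mumford proportionality (equivalently, by Siegel's volume formula for the quotient $\H_3/\Sp$). The answer is a specific rational number expressible in terms of the Bernoulli numbers $B_2,B_4,B_6$ — i.e.\ $\zeta(2)\zeta(4)\zeta(6)$ times a combinatorial factor — of exactly the shape appearing in the asymptotic formula for $\dim A_{g,k}$ that was used in the proof of theorem~\ref{minslope}. For $k\ge1$ I would use the self-intersection formula $L^{6-k}D^k = \int_D (L|_D)^{6-k}(D|_D)^{k-1}$ together with the identifications
\begin{equation*}
D \cong \X_2/\pm 1, \qquad L|_D = p^*L_{\A_2}, \qquad \mathcal{O}_D(D) \equiv -2\Theta,
\end{equation*}
where $p:\X_2\to\A_2$ is the universal family and $\Theta$ is the symmetric universal theta divisor. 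This turns each boundary intersection into
\begin{equation*}
L^{6-k}D^k \;=\; \frac{(-2)^{k-1}}{2}\int_{\X_2}(p^*L)^{6-k}\,\Theta^{k-1},
\end{equation*}
the $1/2$ coming from the $\pm 1$-quotient producing $D$.

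Next I would push forward along $p$. By the projection formula together with Mumford's identities $p_*(\Theta^2)=2$, $p_*(\Theta^3)=3L$, $p_*(\Theta^2\cdot p^*L)=2L^2$, and the higher analogues for $p_*(\Theta^j)$ with $j\ge 4$ — obtained either by extending Grothendieck--Riemann--Roch as in \cite{mumford}, or by rewriting $\Theta^j$ modulo pullback classes using the fibrewise relation $\Theta^2|_{A_\tau}=2\,[\mathrm{pt}]$ — every such integral collapses to a polynomial in $L$ on $\overline{\A_2}$, and is thus reduced to the single classical number $L^3$ on $\overline{\A_2}$ (the genus-$2$ case of Hirzebruch--Mumford). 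Assembling the seven values and checking compatibility with van der Geer's Chow ring dimensions $1,2,4,6,4,2,1$ via Poincar\'e duality would provide an internal consistency check.

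The main obstacle, I expect, is not the reduction scheme itself but step three for the larger values of $k$: the pushforwards $p_*(\Theta^j)$ with $j\ge 4$ that are not explicitly stated in the excerpt must be computed on the compactified universal family $\overline{\X_2}\to\overline{\A_2}$, including the boundary correction terms arising because $\Theta$ extends only up to contributions from the degenerate semiabelian fibres. Carrying this out requires either redoing Mumford's Grothendieck--Riemann--Roch computation in higher relative codimension or exploiting the very explicit binary-sextic description of $\overline{\A_2}$ that van der Geer uses. A secondary but persistent source of numerical error is the bookkeeping of the stacky factors of $1/2$ coming from the universal $\pm 1$-involution and from the fact that $D$ is realised as the image of the double cover $\X_2\to\X_2/\pm 1$ rather than as an embedded subvariety; getting these factors right is what ultimately pins down the intersection numbers to their correct rational values.
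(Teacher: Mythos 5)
The paper itself contains no proof of this theorem --- it is quoted from van der Geer \cite{vdgeer2}, who obtains these numbers as a by-product of computing the full Chow ring of $\overline{\A_3}$ stratum by stratum (torus-rank stratification, explicit generators and relations, input from $\M_3$ and $\A_2$). So your outline has to be judged against that computation and against the pushforward machinery the survey does describe elsewhere. Your first four columns are fine: $L^6$ by proportionality, $L^5D=L^4D^2=0$ for dimension reasons, and $L^3D^3=\frac{(-2)^2}{2}\cdot p_*(\T^2)\cdot L^3_{\overline{\A_2}}=4\cdot\frac{1}{2880}=\frac{1}{720}$, consistent with the table.

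The gap is in the remaining three entries, and it is not just the bookkeeping you flag. First, a concrete failure: with Mumford's normalization $p_*(\T^{3})=3L$ for the universal theta divisor over $\A_2$, your formula gives $L^2D^4=\frac{(-2)^3}{2}L^2\cdot p_*(\T^3)=-12\,L^3_{\overline{\A_2}}=-\frac{1}{240}$, whereas the table says $L^2D^4=0$. The point is that $D|_D$ is fiberwise $-2\T_B$ but globally equals $-2T$ for a \emph{different} normalization of the universal theta divisor (trivialized along the zero section, differing from $\T$ by the pullback of a multiple of $L$), and this changes the pushforwards qualitatively, not merely by stacky factors of $2$ --- compare the two pushforward propositions quoted in the paper, $p_*(\T^{g+1})=\frac{(g+1)!}{2}L$ over interior test curves versus $\frac{(g+1)!}{6}\T_B$ over boundary ones. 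Second, for $LD^5$ and $D^6$ the cycle $(D|_D)^{k-1}$ is genuinely supported also on the torus-rank $2$ and $3$ strata of $D$, which are torus fibrations governed by the cone decomposition; their contributions are what produce $-\frac{203}{240}$ and $-\frac{4103}{144}$, and they do not collapse to rational multiples of $L^3_{\overline{\A_2}}$ via pushforwards of powers of $\T$ along $\X_2\to\A_2$. Carrying this out is exactly the content of \cite{egh}, \cite{egh2} in genus $4$ and of van der Geer's explicit boundary analysis in genus $3$; your proposal correctly locates the difficulty but does not contain the ideas needed to resolve it.
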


\smallskip
\begin{center}
\begin{tabular}{|c|c|c|c|c|c|c|} \hline
$\vphantom{\dfrac{1}{2}} L^6$& $L^5D$&$L^4D^2$&
$L^3D^3$ &$L^2D^4$&$LD^5$&$D^6$\\
\hline $\vphantom{\dfrac{1}{2}}
\frac{1}{181440}$&$0$&$0$&$\frac{1}{720}$&$0$
&$-\frac{203}{240}$&$-\frac{4103}{144}$\\
\hline
\end{tabular}\ .
\end{center}

\smallskip
Compared to $\AP$, the intersection theory on $\overline{\M_g}$ has
been extensively studied. Using Faber's intersection computations
program \cite{faber} for $\overline{\M_4}$ and the computation
of the class of $\overline{\M_4}$ as a divisor in
$\overline{\A_4}^P$ and $\overline{\A_4}^V$ by Harris and Hulek
\cite{hahu}, in a recent work we have determined the intersection
theory of divisors in genus 4.
\begin{thm}[Erdenberger, ---, Hulek \cite{egh}]
a) The intersection numbers of divisors on $\overline{\A_4}^P$
(recall $\Pic_\Q(\AP)=\Q L\oplus\Q D$) are {\rm

\smallskip
\begin{center}
\begin{tabular}{|c|c|c|c|c|}
\hline $\vphantom{\dfrac{1}{2}} L^{10}$&$L^6D^4$&$L^3D^7$&$LD^9$&$D^{10}$\\
\hline $\vphantom{\dfrac{1}{2}}
\frac{1}{907200}$&$-\frac{1}{3780}$&$-\frac{1759}{1680}$&$\frac{1636249}{1080}$&
$\frac{101449217}{1440}$\\
\hline
\end{tabular}\ ,
\end{center}

\smallskip} while all others are zero.

\smallskip
b) For $\overline{\A_4}^V$, recall from theorem \ref{pic4} that there
is a contracting morphism $\pi:\overline{\A_4}^V
\to\overline{\A_4}^P$, with exceptional divisor $E$, and
$\Pic_\Q(\overline{\A_4}^V)=\Q L\oplus\Q D\oplus \Q E$. For our
purposes use $F:=D+4E$ instead of $D$ in the basis: $F\subset\overline{\A_4}^V$ is the
pullback of $D\subset\overline{\A_4}^P$ under $\pi$. Then we have
$$
 \langle L^i F^j\rangle_{\overline{\A_4}^V}=\langle L^i D^j\rangle_{
 \overline{\A_4}^P};\quad \langle
 E^{10}\rangle_{\overline{\A_4}^V}=-\frac{35}{24},
$$
while all other intersection numbers $\langle L^i
F^jE^k\rangle_{\overline{\A_4}^V}$ with $k(i+j)\ne 0$ are zero.
\end{thm}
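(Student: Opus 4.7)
The plan is to push all intersections down to $\A_4^S$ via $\pi_S \colon \overline{\A_4}^P \to \A_4^S$ and exploit the Satake stratification $\A_4^S = \A_4 \sqcup \A_3 \sqcup \A_2 \sqcup \A_1 \sqcup \A_0$ of dimensions $10, 6, 3, 1, 0$. Since $L = \pi_S^* L_S$ with $L_S$ ample on $\A_4^S$, the projection formula gives $\langle L^{10-k} D^k\rangle_{\overline{\A_4}^P} = \int_{\A_4^S} L_S^{10-k} \cdot (\pi_S)_*(D^k)$, and this can only be nonzero when $(\pi_S)_*(D^k)$ has a component of dimension exactly $10-k$ supported on a Satake stratum. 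The five available stratum dimensions correspond precisely to $k \in \{0, 4, 7, 9, 10\}$, matching the announced list of nonvanishing numbers: the remaining values of $k$ must vanish, either automatically by dimension (for small $k$, where positive-dimensional fibers of $\pi_S$ kill the pushforward) or through cancellations in the Hodge-correction terms described below.

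The nonzero numbers are then computed by restricting $D^k$ to the preimage of the appropriate Satake stratum and using the formula $D|_D = -2\Theta + (\text{pulled-back Hodge correction})$ on the boundary $D \cong \X_3/{\pm 1} \to \A_3$, combined with Mumford's pushforward formulas $\pi_*(\Theta^g) = g!$, $\pi_*(\Theta^{g+1}) = \frac{(g+1)!}{2} L$, $\pi_*(\Theta^g L) = g! L$. For $k = 0$, $L^{10} = 1/907200$ comes from Hirzebruch-Mumford proportionality on $\A_4$. For $k = 4$, the leading term gives $(\pi_S)_*(D^4) = -48[\A_3]$ and pairing with van der Geer's $\int_{\A_3} L^6 = 1/181440$ yields $-1/3780$. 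For the deeper values $k = 7, 9, 10$, the stratum lies over $\A_2, \A_1, \A_0$ and parametrizes semiabelian varieties of torus rank $\ge 2$, so one also needs the Grushevsky-Lehavi semiabelian formulas $p_*(\Theta^{g+1}) = \frac{(g+1)!}{6}\Theta_B$ and $p_*(\Theta^g c_1(T_{\X/C})) = 0$ applied on test curves inside the degenerating strata, together with intersection data on $\overline{\A_3}$ and its lower-genus analogs. The main technical obstacle is the careful bookkeeping of Hodge-class corrections and boundary-of-boundary contributions across all four levels of the perfect cone stratification in genus $4$, with the correct orbifold normalizations from the $\pm 1$ involution and the torus quotients. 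To pin down the rational constants unambiguously we supplement these computations with an independent cross-check: Harris and Hulek's explicit class of $\overline{\M_4} \subset \overline{\A_4}^P$ in $\Pic_\Q(\overline{\A_4}^P)$, combined with Faber's intersection algorithm on $\overline{\M_4}$, produces linear relations on the numbers $\int_{\overline{\M_4}} L^i D^{9-i}$ that jointly determine the five unknowns.

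For part (b), apply the projection formula to $\pi \colon \overline{\A_4}^V \to \overline{\A_4}^P$: since $F = \pi^* D$ and $L = \pi^* L$, we obtain $\langle L^i F^j E^k\rangle_{\overline{\A_4}^V} = \langle L^i D^j \cdot \pi_*(E^k)\rangle_{\overline{\A_4}^P}$, which immediately gives the identities $\langle L^i F^j\rangle_V = \langle L^i D^j\rangle_P$. The exceptional divisor $E$ contracts onto the six-dimensional stratum $S_0 \subset \overline{\A_4}^P$ over the point $\A_0 \in \A_4^S$, with generically three-dimensional fibers, so $\pi_*(E^k) = 0$ for $k < 4$ by dimension, handling the cases $k \in \{1, 2, 3\}$. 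For $k \ge 4$ the pushforward $\pi_*(E^k)$ is supported on $S_0$; if $i \ge 1$, then $L|_{S_0} = 0$ (as $S_0$ is contracted to a point by $\pi_S$), forcing the intersection to vanish. The remaining cases with $i = 0$, $j \ge 1$, $k \ge 4$ vanish by the torus-invariance of the Voronoi stratum over $\A_0$ combined with a Shepherd-Barron-type torus-averaging argument: $D|_{S_0}$ decomposes into torus-invariant divisor components whose contractions with $\pi_*(E^k)$ are zero for support reasons. Finally, $\langle E^{10}\rangle_{\overline{\A_4}^V} = \int_E c_1(N_{E/\overline{\A_4}^V})^9$ is computed by explicit analysis of the normal bundle of the exceptional divisor in the toroidal modification, using the combinatorics of the second Voronoi versus perfect cone decompositions of $\overline{C}(\Z^4)$ to produce $-35/24$; this self-intersection is the most delicate single calculation of part (b).
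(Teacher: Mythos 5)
The survey does not actually prove this theorem: it states it with a citation to \cite{egh} and describes the method in a single sentence, namely Faber's intersection-number algorithm on $\overline{\M_4}$ combined with the Harris--Hulek computation \cite{hahu} of the class of $\overline{\M_4}$ as a divisor in $\overline{\A_4}^P$ and $\overline{\A_4}^V$. That is exactly what you relegate to an ``independent cross-check'' at the end of your second paragraph, and you should promote it to the main argument, because it is complete on its own: since $[\overline{\M_4}]=c(8L-D)$ has nonzero $D$-coefficient, the identities $\langle L^iD^{9-i}\cdot[\overline{\M_4}]\rangle=\mathrm{const}\cdot\langle\lambda^i\delta_0^{9-i}\rangle_{\overline{\M_4}}$ (Faber's numbers) determine each $\langle L^iD^{10-i}\rangle$ recursively starting from $\langle L^{10}\rangle$, which comes from Hirzebruch--Mumford proportionality; the vanishing statements in (a) then fall out of the computation rather than requiring a separate geometric argument.

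Your primary route, by contrast, has concrete gaps. The vanishing-by-dimension step is not valid as stated: $(\pi_S)_*(D^k)$, where $\pi_S:\overline{\A_4}^P\to\A_4^S$, is a $(10-k)$-cycle supported on the $6$-dimensional Satake boundary, and nothing forces such a cycle to be supported on a stratum closure of dimension exactly $10-k$; to kill the pushforward one needs the torus-fibration structure of the perfect-cone strata over each $\A_i$, which is the content of the later paper \cite{egh2} and, as reported in this survey, yields the vanishing pattern only for $i>\frac{(g-3)(g-2)}{2}$ --- for $g=4$ this misses precisely the two deepest numbers $LD^9$ and $D^{10}$. Likewise, the Grushevsky--Lehavi formulas you invoke govern rank-one semiabelian families over test curves in $\partial\A_g^*$ and do not by themselves produce the torus-rank $2$, $3$, $4$ contributions, which is exactly where you concede the ``bookkeeping'' becomes the obstacle. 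In part (b) the projection-formula reductions and the vanishing for $k\le 3$ and for $i\ge 1$ are correct and standard, but the two remaining claims --- $\langle F^jE^k\rangle=0$ for $j,k\ge 1$ and $\langle E^{10}\rangle=-\frac{35}{24}$ --- constitute the actual content of part (b), and you only gesture at them (``torus-averaging'', ``explicit analysis of the normal bundle'') without supplying the combinatorial/toric computation that produces $-\frac{35}{24}$.
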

We now remark that in the above results many of the intersection
numbers turn out to be zero, and thus the following conjecture is
plausible
\begin{conj}[Erdenberger, ---, Hulek \cite{egh2}]
An intersection number $\langle L^i D^{\frac{g(g+1)}{2}-i}
\rangle_\AP$ is zero unless $i=\frac{k(k+1)}{2}=\dim\A_k$ for some
$k\le g$.
\end{conj}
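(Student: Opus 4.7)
The plan is to use the Satake contraction $\pi:\AP\to\A_g^S$, exploiting the fact (noted in the excerpt) that $L$ is the pullback of a line bundle from $\A_g^S$. By the projection formula,
$$
\langle L^iD^j\rangle_\AP=\int_{\A_g^S}L^i\cdot\pi_*(D^j),
$$
so the problem reduces to understanding the pushforward cycle class $\pi_*(D^j)$ on $\A_g^S$ and its pairing with powers of the Hodge class.

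The Satake compactification is stratified as $\A_g^S=\bigsqcup_{k=0}^g\A_k$ with $\dim\A_k=\frac{k(k+1)}{2}$, so the strata dimensions are exactly the triangular numbers appearing in the conjecture. Since $D$ is supported in $\pi^{-1}(\partial\A_g^S)$, the class $\pi_*(D^j)$ for $j\ge 1$ is supported on $\partial\A_g^S=\bigsqcup_{k<g}\overline{\A_k}$. Moreover, the restriction of $L^i$ to a stratum $\overline{\A_k}$ is the $i$-th power of the Hodge class on $\A_k^S$, which vanishes identically as soon as $i>\dim\A_k$. Thus the contribution to the intersection number coming from any stratum $\overline{\A_k}$ with $\dim\A_k<i$ is automatically zero. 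I would then argue that the contribution of a stratum $\overline{\A_k}$ with $\dim\A_k>i$ also vanishes, by showing that $\pi_*(D^j)$ restricted to such an $\overline{\A_k}$ is itself supported on $\partial\overline{\A_k}$, permitting an inductive descent: eventually either we reach a stratum whose dimension is exactly $i$ (forcing $i$ to be a triangular number), or the ambient dimension of the supporting stratum drops below $i$ and $L^i$ kills the integrand.

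The crux is thus the claim that $\pi_*(D^j)$ localizes, stratum by stratum, only onto strata of the correct dimension — equivalently, that its positive-codimension components on each $\overline{\A_k}$ are in turn pushed into $\partial\overline{\A_k}$. Establishing this requires detailed control of the boundary geometry of $\AP$. Over each stratum $\A_k\subset\A_g^S$, Shepherd-Barron's description (used in his nef cone theorem) presents $\pi^{-1}(\A_k)$ as a torus fibration over a finite cover of the fiberwise $(g-k)$-th power $\X_k^{\times(g-k)}$ of the universal family over $\A_k$. One then iterates Mumford's formula $D|_D=-2\T$ for the self-intersection of the boundary — together with its deeper-stratum analogues encoded in the combinatorics of the perfect cone decomposition of $\overline C(\Z^g)$ — to show that each restriction $D|_{S_k}$ is expressible in terms of classes in the fiber directions of $S_k\to\A_k$. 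Iterated self-intersection then pushes $D^j$ to either the fundamental class of the correct base stratum or to classes supported on deeper strata.

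The hardest part is carrying out this inductive descent cleanly: the torus-abelian fiber structure of $S_k$ and the combinatorics of the perfect cone grow intricate with $g$ and $g-k$, and one must check that no transverse Chow component survives on any stratum of dimension strictly larger than $\dim\AP-j$. I expect that once the pushforward-localization is established, the non-vanishing coefficients at $i=\dim\A_k$ can be read off from Hirzebruch--Mumford proportionality applied on $\A_k^S$. The subtlety of this iterated cancellation (and the combinatorial complexity of the perfect cone for large $g$) is presumably the reason the statement is posed as a conjecture in \cite{egh2} rather than a theorem.
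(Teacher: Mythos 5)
The statement you are trying to prove is posed in the paper as a \emph{conjecture}, not a theorem: the paper offers no proof of it, only the observation that it holds by inspection of the explicitly computed numbers for $g\le 4$, together with a partial result (proved in the cited follow-up work by an explicit study of the boundary strata of $\AP$ and the intersection numbers on them) establishing the case $i>\frac{(g-3)(g-2)}{2}$. So there is nothing in the paper to compare your argument against line by line; what can be assessed is whether your proposal closes the gap that left this as a conjecture. It does not, and you say as much yourself.

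Concretely, the gap is in the step where you dismiss the contribution of a stratum $\overline{\A_k}$ with $\dim\A_k>i$. The class $\pi_*(D^j)$ is a cycle class of pure dimension $i=\frac{g(g+1)}{2}-j$; a dimension-$i$ component of it supported on a stratum of dimension $<i$ vanishes for trivial reasons, and one supported on a stratum of dimension exactly $i$ is a multiple of the fundamental class, but a dimension-$i$ component sitting inside a stratum $\overline{\A_k}$ of dimension $>i$ is a positive-codimension cycle there, and its pairing with $L^i$ is simply its degree with respect to the ample class $L$ on $\A_k^S$ --- there is no dimension reason for this to vanish. Your proposed remedy, an inductive descent pushing such components into $\partial\overline{\A_k}$ via the fibration structure of $\pi^{-1}(\A_k)$ over (a cover of) $\X_k^{\times(g-k)}$ and iterated use of $D|_D=-2\T$, is precisely the strategy behind the known partial result, but carrying it out for all strata requires controlling the full combinatorics of the perfect cone decomposition, which grows rapidly with $g-k$; this is exactly why the statement remains open outside the range $i>\frac{(g-3)(g-2)}{2}$. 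In short: your outline is a sensible and essentially the expected plan of attack, but the ``pushforward-localization'' claim on which everything rests is the entire content of the conjecture, and it is asserted rather than proved.
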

This is indeed true by inspection of the above numbers for $g\le 4$,
and  by explicitly studying the geometry of the boundary strata of
$\AP$ and the intersection numbers on them, the following result was
also obtained.
\begin{thm}[Erdenberger, ---, Hulek \cite{egh2}]
The above conjecture is true for $i>\frac{(g-3)(g-2)}{2}$; explicit
formulae for the non-zero intersection numbers in this range are also
obtained.
\end{thm}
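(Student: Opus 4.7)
The plan is to compute $\langle L^i D^j\rangle_{\AP}$ (with $i+j=g(g+1)/2$) by exploiting the Shepherd--Barron stratification of $\AP$ and reducing to intersection theory on the lower-dimensional spaces $\A_{g-k}$. Since $L$ is pulled back from $\A_g^S$, the projection formula along $\pi\colon\AP\to\A_g^S$ localises the computation on the boundary of $\A_g^S$: one has $\langle L^iD^j\rangle_{\AP}=\langle L^i\cdot\pi_*(D^j)\rangle_{\A_g^S}$. The subvariety $\A_{g-k}\subset\A_g^S$ has codimension $k(2g-k+1)/2$, so the stratum of $\AP$ lying above it can contribute to $\pi_*(D^j)$ only when $j\ge k(2g-k+1)/2$. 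In the range $j<3(g-1)$, equivalently $i>(g-3)(g-2)/2$, this forces $k\le 2$, so only the strata above $\A_{g-1}$ and $\A_{g-2}$ need to be analysed.

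First I would describe the restriction $D|_{\text{stratum}}$ and the iterated self-intersections $D^j$ on each relevant stratum. On the smooth locus $\A_g^*\cap D=\X_{g-1}/\pm 1$ Mumford's relation gives $D|_D=-2\T$. On the full space $\AP$, however, $D$ is singular along the codimension-two stratum, which Shepherd--Barron realises as a $\C^*$-bundle (torus rank $k(k-1)/2=1$) over the fiberwise square $\X_{g-2}\times_{\A_{g-2}}\X_{g-2}$; iterated self-intersections $D^j$ therefore pick up correction classes supported on this stratum, expressible in terms of pullbacks of $\T$ from the two factors and the torus-invariant divisor arising from the relevant generator of the perfect cone (generalising the three-generator example $\sigma$ from the genus-two case). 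Identifying these correction terms is essential, since without them the naive $-2\T$ reduction produces numerical answers that are inconsistent with the known tables in genera three and four.

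Next I would push the resulting classes down to $\A_{g-k}$, using the propositions of Mumford and Grushevsky--Lehavi recalled in the paper together with their natural extensions to iterated semiabelian families. Each factor $\X_{g-k}\to\A_{g-k}$ has relative dimension $g-k$, so the pushforward of $\T^{a}$ along the fibers is zero unless $a\ge g-k$: specifically $p_*(\T^{g-k})=(g-k)!$, $p_*(\T^{g-k+1})$ is a multiple of $L$, and higher powers give polynomials in the Hodge classes. Matching codimensions against $L^i$ on the $(g-k)(g-k+1)/2$-dimensional $\A_{g-k}$ forces $i=(g-k)(g-k+1)/2$ as a necessary condition for the $k$-th stratum to yield a non-zero contribution; in the range $i>(g-3)(g-2)/2$ these are precisely the three triangular numbers $g(g+1)/2$, $(g-1)g/2$, $(g-2)(g-1)/2$. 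For the remaining non-triangular $i$ in the range, the correction terms from the codimension-two stratum are designed to combine with the naive codimension-one contribution so as to produce the predicted vanishing.

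The main obstacle is the precise identification of $D$ restricted to the codimension-two stratum and of the associated correction terms entering $D^j$. Unpacking the perfect cone combinatorics to pin down the coefficients of the torus-invariant divisor and the theta pullbacks, and then performing the pushforward along the $\C^*$-bundle and the fiber-squared universal family, is the technical core of the argument. Once these classes are in place, the explicit non-zero intersection numbers emerge as products of combinatorial multiplicities with $\langle L^{(g-k)(g-k+1)/2}\rangle_{\A_{g-k}^S}$, and the vanishing for non-triangular $i$ in the range follows from the cancellation between the codimension-one and codimension-two contributions.
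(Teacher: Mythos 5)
Your overall architecture is the one the survey indicates for \cite{egh2} (``explicitly studying the geometry of the boundary strata of $\AP$ and the intersection numbers on them''): the support/dimension count showing that for $j<3(g-1)$ only the strata over $\A_{g-1}$ and $\A_{g-2}$ can contribute to $\pi_*(D^j)$ is correct and is exactly the right reduction. The survey itself contains no proof, so the comparison is with that indicated method, which you have reproduced.

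There is, however, a genuine gap in your account of why the non-triangular $i$ give zero, and it is visible already in the sub-range $g<j<2g-1$. By your own (correct) support argument, for $j<2g-1$ the stratum over $\A_{g-2}$ pushes forward to zero, so no ``cancellation between the codimension-one and codimension-two contributions'' is available there: the whole number is $(-2)^{j-1}\langle L^i\,T^{j-1}\rangle$ computed on the closure of $\X_{g-1}/\pm1$, and it must vanish on its own. If you take $D|_D=-2\T$ with $\T$ the naive universal theta class and apply Mumford's pushforward $p_*(\T^{g})=\frac{g!}{2}\ll_1$ for $\X_{g-1}\to\A_{g-1}$, you get for instance $\langle L^2D^4\rangle_{\overline{\A_3}}$ equal to a nonzero multiple of $\langle\ll_1^{3}\rangle_{\overline{\A_2}}$, contradicting the vanishing in van der Geer's genus-$3$ table quoted in the survey. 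The missing ingredient is a normalization: the class $T$ in $D|_D=-2T$ is the universal symmetric theta divisor rigidified along the zero section, which differs from the naive class by $\tfrac12 p^*\ll_1$, and for this normalization one checks $p_*\bigl(T^{g}\bigr)=p_*(\T^g)-\tfrac{g}{2}\ll_1\,p_*(\T^{g-1})=\tfrac{g!}{2}\ll_1-\tfrac{g!}{2}\ll_1=0$, and similarly for the higher powers needed in this range. These pushforward identities --- not a cancellation between strata --- are what kill the non-triangular $i$ with $g<j<2g-1$; the interaction with the codimension-two stratum only enters for $j\ge 2g-1$. For the same reason, your assertion that ``matching codimensions forces $i=(g-k)(g-k+1)/2$'' is not a dimension argument: a pushforward of positive codimension on $\overline{\A_{g-k}}$ would pair nontrivially with $\ll_1^i$ unless it actually vanishes, and proving that vanishing (with the correct normalizations on both strata) is the real content of the theorem, which the proposal defers or assumes.
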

The above considerations suggest that the homology and intersection
homology of $\AP$ and $\A_g^S$ could be related; it is natural to
look more generally at the full Chow and cohomology rings instead of
just the top intersection numbers of divisors. Since the class of
$\overline{\M_4}\subset\overline{\A_4}^V$ is known and much is known
about the Chow ring and cohomology of $\M_4$, there is the following
natural
\begin{OP}
Determine the cohomology and Chow rings for $\overline{\A_4}^P$,
$\overline{\A_4}^V$, or at least for $\A_4$.
\end{OP}
Tommasi \cite{tommasi} recently computed the cohomology of $\M_4$,
which turns out to have an odd class.
\begin{OP}
Is some odd cohomology $H^{2k+1}(\A_g)$ ever non-zero? In
particular, do $\A_4$ or its compactifications have any odd
cohomology?
\end{OP}
\section{Special loci: subvarieties of $\A_g$}
In section \ref{biratgeom} we discussed the question of
constructing geometric divisors on $\A_g^*$. In the previous section
we discussed the Chow and homology rings of $\A_g$ and its
compactifications. We will now consider the question of constructing
and studying subvarieties of $\A_g$ of any dimension. One possible
motivation for researching this would be to try to see if perhaps
the cohomology is supported on a closed subvariety. On the other
hand, stratifying $\A_g$ in a geometrically meaningful way could
shed more light on the geometry of individual abelian varieties,
depending on which stratum they lie in, and yield results related to
characterizing geometrically constructible loci. Many of the
constructions and problems we survey are discussed in more detail in
\cite{bila}.

\begin{dsc}[{\bf Complete subvarieties}]

In \cite{vdgeer1} van der Geer showed that $\ll_1^{\frac{g(g-1)}{2} +1}=0 \in
CH^*_\Q(\A_g)$. Since $\ll_1$ is ample on $\A_g$, it follows that
there cannot exist a closed subvariety $\A_g$ of dimension larger
than $\frac{g(g-1)}{2}$ (i.e.~of codimension less than $g$), since
otherwise the top power of $\ll_1$ on it would have to be non-zero, contradicting the above equality. However, it is known that
$\ll_1^{\frac{g(g-1)}{2}}\ne 0\in CH^*_\Q(\A_g)$, so it natural to ask if
there exists a codimension $g$ closed subvariety
$X\subset\A_g$, which could then perhaps carry all the cohomology
(i.e.~such that $H^*(X)=H^*(\A_g)$)? We discuss in section
\ref{charp} that in characteristic $p$ there exists a complete codimension $g$ subvariety of $\A_g$, but over $\C$
this was conjectured by Oort (stated in \cite{vdgoo}) not to be the
case. This was recently proven:
\end{dsc}
\begin{thm}[Keel and Sadun \cite{kesa}]\label{oort}
Over $\C$, there does not exist a complete subvariety of $\A_g$ of
codimension $g$.
\end{thm}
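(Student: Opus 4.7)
The plan is to argue by contradiction. Suppose $X\subset\A_g$ is a complete irreducible subvariety of codimension $g$, so $\dim X = g(g-1)/2$. First I would pass to a neat level cover $\A_g(n)$ with $n\ge 3$, where the ambient space is a smooth complex manifold and $\E$ is a genuine vector bundle; after taking an irreducible component of the preimage of $X$ and resolving singularities, we may assume $X$ lifts to a smooth projective variety $\widetilde X$ in the level cover, mapping finitely to $X$.

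The key geometric input is the Kodaira--Spencer isomorphism $T\A_g\cong S^2\E^{\vee}$, which at a point $[\tau]$ identifies $T_{[\tau]}\A_g$ with the symmetric $g\times g$ matrices on $H^{1,0}(A_\tau)^{\vee}$. Restricting to $\widetilde X$ yields the normal-bundle exact sequence
$$
0\to T\widetilde X\to S^2\E^{\vee}\big|_{\widetilde X}\to N\to 0,
\qquad \operatorname{rk} N = g.
$$
I would then derive a contradiction by computing the top intersection number $\int_{\widetilde X} c_g(N)\cdot L^{g(g-1)/2-g}$ (for $g\ge 3$; the small genera can be handled separately, the case $g=2$ reducing to the statement that $\A_2=\M_2$ contains no complete curves) in two incompatible ways.

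For strict positivity of $c_g(N)$, I would use the Hodge (Petersson) metric on $\E$ coming from the Hermitian symmetric structure $\H_g=\mathrm{Sp}(2g,\R)/U(g)$: this makes $\E$ Griffiths semi-negative, so $S^2\E^{\vee}$ is Griffiths semi-positive, and the induced metric on the quotient $N$ is at least semi-positive. The essential point is to upgrade this to \emph{strict} Griffiths positivity on a Zariski-open subset of $\widetilde X$, from which a Bloch--Gieseker / Fulton--Lazarsfeld type argument yields $\int_{\widetilde X} c_g(N)\cdot L^{g(g-1)/2-g}>0$. For vanishing of the same number, one would appeal to the van der Geer Chow relations $c(\E)\,c(\E^{\vee})=1$ and $\ll_g=0$ on $\A_g$ (which pull back to $\widetilde X$), combined with Hirzebruch--Mumford proportionality comparing tautological intersection numbers on $\widetilde X$ to those on the compact dual $\mathrm{Sp}(2g)/U(g)$, where the corresponding class is zero --- the subvariety $\widetilde X$ of dimension exactly $g(g-1)/2$ sits at the sharp edge of van der Geer's vanishing $\ll_1^{g(g-1)/2+1}=0$ and therefore inherits a full set of relations among Chern numbers of $\E|_{\widetilde X}$.

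The main obstacle --- and the heart of the Keel--Sadun argument --- is the positivity upgrade. Griffiths semi-negativity of $\E$ alone gives only semi-positivity of $N$, and the flat directions of the Hodge curvature correspond precisely to directions in which the underlying VHS over $\widetilde X$ is locally constant. Completeness of $\widetilde X$ enters here via rigidity for variations of Hodge structure (Schmid's nilpotent orbit theorem / Deligne semisimplicity): any nontrivial flat subbundle of $S^2\E^{\vee}|_{\widetilde X}$ transverse to $T\widetilde X$ would force $\widetilde X$ to pick up genuine degenerations of the underlying abelian varieties, which is ruled out since $\widetilde X$ is complete and contained in the open $\A_g(n)$, not in its toroidal boundary. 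This is also exactly where characteristic zero is critical: in positive characteristic the Frobenius creates additional flat directions in the Hodge bundle, and correspondingly complete codimension-$g$ Ekedahl--Oort strata in $\A_g$ do exist, cf.\ section \ref{charp}.
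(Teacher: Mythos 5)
First, a caveat: the survey does not prove this theorem --- it is quoted from Keel--Sadun \cite{kesa} --- so your sketch can only be assessed on its own terms, not against an argument in the text. As a strategy it has two gaps that are fatal as written. The first is a sign error at the base of the ``positivity'' half. With the Hodge (Petersson) metric the Hodge bundle $\E=\pi_*\Omega^1_{\X_g/\A_g}$ is Griffiths \emph{semi-positive} (it is nef, and $\det\E=L$ is ample --- your claimed semi-negativity of $\E$ would force $c_1(L)\le 0$); consequently $T\A_g\cong S^2\E^\vee$ is Griffiths semi-\emph{negative}, as one expects for the tangent bundle of a quotient of a bounded domain, and a quotient $N$ of a semi-negative bundle inherits no sign at all: its curvature is the restriction of a semi-negative form plus the semi-positive second-fundamental-form term. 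So the starting point ``$N$ is at least semi-positive'' is unavailable. Even if it were, Griffiths semi-positivity does not control $c_g(N)$; the Bloch--Gieseker/Fulton--Lazarsfeld positivity you invoke requires ampleness of $N$ (nefness for the weak inequality), and ``strict positivity on a Zariski-open subset'' is neither established by the VHS-rigidity remark nor sufficient to conclude $\int_{\widetilde X}c_g(N)\cdot L^{\dim\widetilde X-g}>0$.

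The second fatal gap is the ``vanishing'' half. The class $c_g(N)=\bigl[c(S^2\E^\vee|_{\widetilde X})\,c(T\widetilde X)^{-1}\bigr]_g$ involves the Chern classes of $T\widetilde X$, which are not tautological and are constrained neither by the relation $c(\E)c(\E^\vee)=1$ nor by Hirzebruch--Mumford proportionality; the latter governs intersection numbers of tautological classes on $\A_g$ itself, not Chern numbers of bundles restricted to an arbitrary complete subvariety. Moreover, no argument of the form ``the class of $X$ is tautological and pairs to zero with $\ll_1^{\dim X-g}$'' can work on the open, non-complete $\A_g$, where degrees of $0$-cycles are undefined: in characteristic $p$ the $p$-rank zero locus $V_0$ is complete of codimension exactly $g$ with tautological class a multiple of $\ll_g$, and no contradiction arises --- so tautological bookkeeping alone is structurally incapable of excluding such a subvariety, and the genuinely transcendental input must enter the computation itself rather than a side remark. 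Two smaller points: after resolving singularities the map $\widetilde X\to\A_g(n)$ need not be an immersion, so the normal-bundle sequence needs justification; and the $g=2$ case cannot be ``handled separately'' as proposed, since $\A_2\ne\M_2$ and $\A_2$ \emph{does} contain complete curves, which have codimension $2=g$ --- the theorem, as in \cite{kesa}, requires $g\ge 3$, a hypothesis the survey's own statement elides.
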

This leads to the following
\begin{OP}
What is (over $\C$) the maximal dimension of a complete subvariety
of $\A_g$?
\end{OP}
Since $\partial\A_g^S$ is codimension $g$, if we start intersecting
general hypersurfaces in $\A_g^S$, then once the dimension of the
intersection drops down to $g-1$, we know that it generally should
not intersect the boundary --- thus there exist complete
subvarieties of $\A_g$ of dimension $g-1$. The theorem above says
that the maximal dimension of a complete subvariety of $\A_g$ cannot be greater than
$\frac{g(g-1)}{2}-1$. We do not have any reasons to believe that
either the lower or upper bound are close to the actual maximal
dimension of subvarieties. Instead of studying the maximal dimension
of a closed subvariety of $\A_g$, one can also ask for the maximal
dimension of a closed subvariety of $\A_g$ passing through a general
point, etc. --- some questions in this direction, for both $\M_g$
and $\A_g$, are discussed by Izadi in \cite{izadi}.

One can also consider the following related
\begin{OP}
What is the cohomological dimension of $\A_g$, i.e.~what is the
smallest $n$ such that for any coherent sheaf $\F$ on $\A_g$ we have
$H^k(\A_g,\F)=0\ \forall k>n$?
\end{OP}
It is clear that if the cohomological dimension is $n$, then the
maximal possible dimension of a complete subvariety is at most $n$,
but we are not aware of a bound going the other way. For $\M_g$
it is conjectured by Looijenga that the cohomological dimension is
equal to $g-2$, and in fact that $\M_g$ can be covered by $g-1$
affine open sets, while for $\A_g$ we do not even have a conjecture.
The issue of cohomological dimension and affine covers was recently
studied by Roth and Vakil \cite{rova}.

\begin{dsc}[{\bf Stratifications of $\A_g$}]

As we saw above, constructing (over $\C$) explicit complete
subvarieties of $\A_g$ is very hard. Maybe it is easier to construct
some non-complete subvarieties? One can consider the loci of ppavs
given by various geometric constructions: Jacobians, Pryms,
intermediate Jacobians, etc., but all of these seem to be, for $g$
large enough, of exceedingly high codimension in $\A_g$, and thus
probably do not capture much of the geometry of $\A_g$. Thus it is
natural to wonder whether one can define stratifications of $\A_g$
and obtain some geometric information about each of the strata.
\end{dsc}
\begin{df}
We define the {\it Andreotti-Mayer locus} $N_k\subset\A_g$ to be the
locus of ppavs for which $\dim\Sing
\T\ge k$. Clearly we then have
$$
 \emptyset=N_{g-1}\subseteq N_{g-2}\subseteq\ldots
 \subseteq N_1\subsetneq N_0\subsetneq N_{-1}=\A_g.
$$
(In \cite{mumford} Mumford proved $N_1\subsetneq N_0$.)
\end{df}
These loci were originally introduced as an approach to the Schottky
problem:
\begin{thm}[Andreotti and Mayer \cite{anma}]
$N_{g-4}$ contains the Jacobian locus as an irreducible component;
$N_{g-3}$ contains the hyperelliptic locus.
\end{thm}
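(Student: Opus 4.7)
The plan is to reduce the question about $\operatorname{Sing}(\Theta)$ for a Jacobian to Brill--Noether theory on $C$ via the Riemann--Kempf singularity theorem, and then upgrade the resulting containments to the irreducible-component statements through a tangent-space calculation combining the heat equation with Kempf's description of tangent cones.

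For the containments, I would invoke the Riemann--Kempf singularity theorem: for a smooth curve $C$ of genus $g$, after identifying $J(C)$ with $\operatorname{Pic}^{g-1}(C)$ via a choice of theta characteristic, $\Theta = W_{g-1}(C) := \{L : h^0(L)\ge 1\}$ and $\operatorname{mult}_L(\Theta) = h^0(C,L)$, so
$$
\operatorname{Sing}(\Theta) = W^1_{g-1}(C) := \{L : h^0(L)\ge 2\}.
$$
By the Kempf--Kleiman--Laksov existence theorem every component of $W^1_{g-1}(C)$ has dimension at least $\rho(g,1,g-1) = g-4$, hence $\mathcal J_g \subseteq N_{g-4}$. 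For a hyperelliptic $C$ with pencil $\eta$, the translate $\eta + W_{g-3}(C) \subseteq W^1_{g-1}(C)$ has dimension $g-3$ (for $g\ge 3$), so $\mathcal H_g \subseteq N_{g-3}$.

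For the component claims, I would carry out a Zariski-tangent-space calculation at a generic Jacobian (respectively, a generic hyperelliptic Jacobian). An infinitesimal deformation of $(J(C),\Theta)$ in $\mathcal A_g$ is a symmetric $M = (M_{jk}) \in T_{[J(C)]}\mathcal A_g = \operatorname{Sym}^2 H^0(C,K_C)^\vee$, and by the heat equation
$$
\frac{\partial\theta}{\partial\tau_{jk}} = 2\pi i(1+\delta_{j,k})\frac{\partial^2\theta}{\partial z_j\partial z_k}
$$
the requirement that a given singular point $p \in \operatorname{Sing}(\Theta)$ remain singular to first order becomes the linear condition
$$
\sum_{j\le k} M_{jk}\,\frac{\partial^2\theta}{\partial z_j\partial z_k}(\tau,p) = 0.
$$
As $p$ varies over $\operatorname{Sing}(\Theta)$, these conditions cut out the Zariski tangent space to $N_k$ at $[J(C)]$. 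The Torelli tangent space $T_{[C]}\mathcal J_g$ of dimension $3g-3$ (respectively, $T_{[C]}\mathcal H_g$ of dimension $2g-1$) visibly satisfies all of them, so the component claim reduces to showing that this linear system has rank exactly $(g-2)(g-3)/2$ in the Jacobian case (respectively, $(g-1)(g-2)/2$ in the hyperelliptic case).

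The main obstacle is this rank computation, which is really a statement about the canonical embedding. By Kempf's description of tangent cones, the Hessian $H_L := \bigl(\partial^2\theta/\partial z_j\partial z_k\bigr)(\tau,L)$, viewed as a symmetric form on $H^0(K)^\vee$, is the determinant $\det(\mu_0(s_i \otimes t_j))_{i,j}$ built from the Petri cup-product $\mu_0 : H^0(L) \otimes H^0(K-L) \to H^0(K)$; in particular $H_L$ is a rank-$4$ quadric vanishing on the canonical curve, so $H_L \in I_2(C)$, the space of quadrics through the canonical image. For non-hyperelliptic $C$ one has $\dim I_2(C) = (g-2)(g-3)/2$, exactly the rank we need, so the component statement for $\mathcal J_g$ amounts to showing that the Hessians $\{H_L\}_{L \in W^1_{g-1}(C)}$ span all of $I_2(C)$ for a generic curve --- this is the technical heart of the Andreotti--Mayer argument and would be proved by a direct Petri-type analysis exploiting generic smoothness of $W^1_{g-1}(C)$ in the expected dimension $g-4$. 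For a generic hyperelliptic curve an analogous spanning statement for the quadrics through the rational normal canonical image, using the explicit sweep-out $\eta + W_{g-3}(C)$ and the factorization of $\mu_0$ through the hyperelliptic pencil, yields the hyperelliptic component claim.
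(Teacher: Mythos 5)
The paper offers no proof of this statement --- it is quoted as a classical theorem with a citation to Andreotti--Mayer --- so there is nothing internal to compare against; what you have written is in effect a reconstruction of the original argument of \cite{anma}, and it is the right one. The containment half is complete as it stands: Riemann's singularity theorem identifies $\Sing\T$ with $W^1_{g-1}(C)$, the Kempf/Kleiman--Laksov existence theorem gives $\dim W^1_{g-1}\ge\rho=g-4$ for every curve, and the translate $\eta+W_{g-3}(C)$ gives the $(g-3)$-dimensional singular locus in the hyperelliptic case. (Note that the statement as quoted only asserts \emph{containment} of the hyperelliptic locus in $N_{g-3}$, so your component analysis there is more than is being asked for.) For the component claim your architecture is correct: the heat equation converts first-order preservation of a singular point $p$ into annihilation of the Hessian quadric $H_p$, Kempf's tangent-cone description shows $H_L$ is a rank-$\le 4$ quadric in $I_2(C)$, and the numerics match, since $\dim I_2(C)=(g-2)(g-3)/2=\operatorname{codim}_{\A_g}\M_g$. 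The one genuine gap --- which you flag honestly --- is the spanning statement that $\{H_L\}_{L\in W^1_{g-1}}$ generate all of $I_2(C)$ for a generic curve; this is indeed where essentially all of the work in \cite{anma} lies, and "a direct Petri-type analysis" is a placeholder rather than an argument. A secondary point worth tightening: to conclude that a component $Z$ of $N_{g-4}$ containing the Jacobian locus has dimension $\le 3g-3$, you should argue via arcs in $Z$ through $[J(C)]$ (bounding the tangent cone, whose dimension equals $\dim Z$), rather than asserting that these linear conditions "cut out the Zariski tangent space to $N_k$", since $N_k$ is only given as a set and the scheme structure is not specified.
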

The locus $N_{g-4}$ in low genera was studied by Beauville
\cite{beauville} and Debarre \cite{de3}, \cite{de2} who described
the extra components in it, other than the Jacobian locus,
explicitly. One can also ask what are the dimensions of other
Andreotti-Mayer loci.
\begin{thm}[Ciliberto and van der Geer \cite{civdg},\cite{amsp}]
For all $k\le g-3$ we have $\operatorname{codim}N_k\ge k+2$ (for
$k\ge g/3$ this bound can be improved to $k+3$).
\end{thm}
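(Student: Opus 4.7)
The plan is to bound $\operatorname{codim}N_k$ from below at a generic point of each irreducible component of $N_k$ by an infinitesimal deformation argument: I compute the rank of an explicit linear map $T_{[\tau]}\A_g\to K^*$ built from the heat equation and the Hessian of $\t$ at a singular point, and show this rank is at least $k+2$ (respectively $k+3$).

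Let $Z\subseteq N_k$ be an irreducible component not contained in the locus of products of lower-dimensional ppavs (such components are handled separately), let $[\tau]\in Z$ be generic, and pick a smooth generic point $p$ of $\Sigma:=\Sing\T_\tau$; then $\dim\Sigma=k$ and the Hessian
$$
  H:=\left(\frac{\p^2\t}{\p z_i\p z_j}(\tau,p)\right)_{i,j}\in\operatorname{Sym}^2(T_0A_\tau)^*
$$
has rank exactly $g-k$, with kernel $K=T_p\Sigma\subseteq T_0A_\tau$ of dimension $k$. A tangent vector $\dot\tau\in T_{[\tau]}\A_g=\operatorname{Sym}^2(T_0A_\tau)^*$ is tangent to $Z$ only if we can continue to choose a singular point as $\tau$ varies: there must exist $\dot p\in T_0A_\tau$ with $\left.\tfrac{d}{dt}\right|_0\nabla_z\t(\tau+t\dot\tau,\,p+t\dot p)=0$. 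Substituting the heat equation $\p_{\tau_{jk}}\t=\frac{1}{2\pi i(1+\de_{jk})}\p_{z_jz_k}\t$ and differentiating once more in $z$, this becomes
$$
  H\cdot\dot p+\Psi_p(\dot\tau)=0\quad\text{in }T_0A_\tau^*,
$$
where $\Psi_p\colon\operatorname{Sym}^2(T_0A_\tau)^*\to T_0A_\tau^*$ is the linear map built from the third-derivative tensor $\t_{jkm}(\tau,p)=\p_{z_jz_kz_m}\t(\tau,p)$.

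Projecting onto $\operatorname{coker}H\cong K^*$ eliminates $\dot p$ and gives a well-defined map $\overline{\Psi}_p\colon T_{[\tau]}\A_g\to K^*$. Letting $p$ range over $\Sigma$ we conclude
$$
  T_{[\tau]}Z\ \subseteq\ \bigcap_{p\in\Sigma}\ker\overline{\Psi}_p, \qquad \operatorname{codim}_{\A_g}Z\ \ge\ \dim\operatorname{im}\Bigl(\bigoplus_{p\in\Sigma}\overline{\Psi}_p\Bigr).
$$
Identifying $\dot\tau$ with a quadratic form $Q$ on $T_0A_\tau$, each condition $\overline{\Psi}_p(\dot\tau)(v)=0$ for $v\in K$ says that $Q$ annihilates a specific vector in $\operatorname{Sym}^2T_0A_\tau$ constructed from $v$ and the third-derivative tensor at $p$.

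The main step, and the main obstacle, is to show that these vectors (as $p$ ranges over $\Sigma$ and $v$ over $T_p\Sigma$) span a subspace of $\operatorname{Sym}^2T_0A_\tau$ of dimension at least $k+2$, respectively $k+3$ when $k\ge g/3$. A naive count at a single $p$ only yields $k$ independent constraints, one per basis vector of $K$; the additional ones must come from the global projective geometry of $\Sigma\subseteq A_\tau$, using the Gauss map of $\Sigma$ and the symmetry of the third-derivative tensor to show that the ``osculating quadrics'' attached to distinct points of $\Sigma$ are suitably linearly independent. The improvement to $k+3$ for $k\ge g/3$ uses that $K$ is then large enough to execute a second-order propagation of the singularity condition along $\Sigma$, producing one further linearly independent constraint. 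The hypothesis $k\le g-3$ is what guarantees the generic-Hessian-rank-equals-$g-k$ assumption used throughout.
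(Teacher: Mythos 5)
Your proposal is a strategy outline rather than a proof: the step you yourself label ``the main step, and the main obstacle'' --- showing that the constraints you produce span a subspace of $\operatorname{Sym}^2 T_0A_\tau$ of dimension at least $k+2$, resp.\ $k+3$ --- is precisely where essentially all of the work in Ciliberto--van der Geer's argument lies, and you do not carry it out. The general framework (differentiate the equations defining $\Sing\T_\tau$ along a first-order deformation, convert $\tau$-derivatives into $z$-derivatives via the heat equation, and read off conormal vectors to $N_k$ at a generic point) is indeed the right starting point and is how \cite{civdg}, \cite{amsp} begin. But you have discarded the simplest and most useful condition: differentiating the equation $\t(\tau+t\dot\tau,\,p+t\dot p)=0$ itself (not only its gradient) and using $\nabla_z\t(\tau,p)=0$ yields directly $\langle\dot\tau,H_p\rangle=0$, i.e.\ the Hessian quadric $Q_p$ is a conormal vector to $N_k$ at $[\tau]$ for \emph{every} $p\in\Sing\T_\tau$. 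It is this family of quadrics $\{Q_p\}_{p\in\Sing\T}$, not the third-derivative tensor, whose span Ciliberto and van der Geer bound from below, via a detailed study of its behaviour under the Gauss map of $\Sing\T$ and the indecomposability of $A$; that analysis is the actual content of the theorem. Your map $\overline{\Psi}_p$ gives at most $k$ conditions at a single point, and the claimed linear independence of the ``osculating quadrics'' attached to distinct points is asserted, not proved.

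There is a second, more local gap: the assertion that the Hessian at a generic smooth point $p$ of $\Sigma$ has rank exactly $g-k$ is unjustified. Differentiating $\nabla_z\t\equiv0$ along $\Sigma$ gives only $T_p\Sigma\subseteq\ker H$, hence $\operatorname{rk}H\le g-k$; the hypothesis $k\le g-3$ does not by itself force equality, and the possible degeneration of the quadrics $Q_p$ beyond the forced corank $k$ is exactly one of the delicate points that has to be controlled in \cite{amsp}. Since your reduction to $\operatorname{coker}H\cong K^*$ and the elimination of $\dot p$ both depend on this equality, the argument is not self-contained even before the missing counting step. As written, the proposal establishes no effective lower bound on $\operatorname{codim}N_k$, and in particular not $k+2$.
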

Is this a reasonable bound for codimension? The codimension in
$\A_g$ of the Jacobian locus, which is a component of $N_{g-4}$, is $\frac{(g-3)(g-2)}{2}$. A na\"\i ve, and thus completely
unjustified, dimension count for the number of conditions for a
point to be in $N_k$ seems to indicate that the codimension should
indeed be quadratic in $k$. This motivates the following:
\begin{conj}[Ciliberto and van der Geer \cite{civdg}]\label{conjcodim}
Within the locus of simple\footnote{We remind that a ppav is called
simple if it does not have an abelian subvariety. A very general
ppav is simple.} abelian varieties, ${\rm codim}\,
N_k\ge\frac{(k+1)(k+2)}{2}$.
\end{conj}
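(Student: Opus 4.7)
The plan is to use the incidence variety
\[
  I_k := \{(\tau,z)\in\H_g\times\C^g : \t(\tau,z)=0,\ \nabla_z\t(\tau,z)=0,\ \operatorname{corank}\operatorname{Hess}_z\t(\tau,z)\ge k\}
\]
inside $\X_g$, together with its projection $p:I_k\to\A_g$. At a generic simple $[A]\in N_k$ the tangent space to $\Sing\T_\tau$ at any smooth point is contained in $\ker\operatorname{Hess}_z\t$, so $\operatorname{corank}\operatorname{Hess}_z\t\ge k$ along a $k$-dimensional component of $\Sing\T_\tau$. Hence $p^{-1}([A])\supseteq\Sing\T_\tau$ has dimension $\ge k$, and $\dim I_k\ge\dim N_k+k$. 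The conjecture therefore reduces to showing that the three conditions cutting out $I_k$ --- one from $\t$, $g$ from $\nabla_z\t$, and $\binom{k+1}{2}$ from the determinantal condition --- really do cut it out in codimension $1+g+\binom{k+1}{2}$ over the simple locus, for then $\operatorname{codim}_{\A_g}N_k\ge 1+k+\binom{k+1}{2}=\binom{k+2}{2}$.

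First I would linearize at a smooth point $(\tau_0,z_0)\in I_k$, using the heat equation $\p\t/\p\tau_{jk}=2\pi i(1+\de_{j,k})\p^2\t/\p z_j\p z_k$ to convert all $\tau$-derivatives of $\t$ into $z$-derivatives. Writing $H:=\operatorname{Hess}_z\t(\tau_0,z_0)$ and $K:=\ker H$, the linearization of $\t=0$ produces one equation of the form $\operatorname{tr}(\dot\tau\cdot H)=0$; the linearization of $\nabla_z\t=0$ couples $\dot\tau$ (through third $z$-derivatives of $\t$) to $\dot z$ (through $H$) and contributes $g$ equations; and the linearization of the corank condition requires the restriction $(\dot H)|_K$ to vanish as a symmetric form on $K$, giving $\binom{k+1}{2}$ further equations, with $\dot H$ expressible via fourth $z$-derivatives of $\t$ after applying the heat equation once more.

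The main obstacle is to prove that these $1+g+\binom{k+1}{2}$ equations are genuinely independent at a very general simple point of $I_k$. Simplicity is essential: for a product $A=B\times C$ the theta function factorizes, $H$ decomposes into blocks, and many of the expected conditions become linearly dependent, collapsing the codimension. In the simple case a natural strategy is to exhibit, for each symmetric bilinear form on $K$, an explicit modular deformation $\dot\tau\in\operatorname{Sym}^2\C^g=T_{[A]}\A_g$ realizing that condition, via a transversality argument for the Gauss-type map $z\mapsto(\operatorname{Hess}_z\t)|_K$ along $\Sing\T_\tau$, combined with the fact that a very general simple ppav has $\operatorname{End}(A)=\Z$, which rules out forced linear relations coming from symmetries. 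Nontrivial isogeny factors or endomorphisms of smaller rank seem to obstruct such an argument uniformly in $k$, and the existing Ciliberto--van der Geer bound $\operatorname{codim}N_k\ge k+2$ reflects precisely what can be extracted from a single rank-jump condition.

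Two subsidiary points need care in tandem. One must check that a general $[A]\in N_k$ has $\dim\Sing\T_\tau=k$ exactly, rather than strictly larger, so that the fiber bound $k$ is sharp --- naturally handled by descending induction on $k$, since excess dimension forces $[A]\in N_{k+1}$. One must also control components of $I_k$ over which $\t$ vanishes at $z_0$ to order greater than $2$, or where the heat-equation linearization otherwise degenerates, so that these do not create an unexpectedly large component of $I_k$. Pushing the transversality argument through the entire corank stratification uniformly in $k$ is, as noted, where the currently available techniques appear to stop short, and is presumably why the conjecture remains open.
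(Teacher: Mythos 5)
The statement you are addressing is stated in the paper as a \emph{conjecture} of Ciliberto and van der Geer; the paper offers no proof and explicitly says it ``seems very hard,'' noting that even the much weaker question of whether $N_k=N_{k+1}$ can ever happen for $k<g-4$ is open. Your proposal is therefore to be judged as a proof attempt of an open problem, and it does not close it. What you have written is essentially the same ``na\"\i ve, and thus completely unjustified, dimension count'' that the paper itself cites as the motivation for the conjecture: the incidence variety $I_k$ cut out by $\t=0$, $\nabla_z\t=0$, and $\operatorname{corank}\operatorname{Hess}_z\t\ge k$ has expected codimension $1+g+\binom{k+1}{2}$ in $\X_g$, the fibers over $N_k$ have dimension at least $k$, and the arithmetic $1+k+\binom{k+1}{2}=\binom{k+2}{2}$ reproduces the conjectured bound. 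That part is correct and is a reasonable way to see where the quadratic growth in $k$ comes from.

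The genuine gap is the one you name yourself in your final paragraph: proving that these conditions actually cut out $I_k$ in the expected codimension over the simple locus. This is not a technical loose end to be ``pushed through'' --- it is the entire content of the conjecture. Your proposed mechanism (a transversality argument for the map $z\mapsto(\operatorname{Hess}_z\t)|_K$, using $\operatorname{End}(A)=\Z$ for a very general simple ppav to rule out forced linear relations) is only a heuristic for why one might hope no degeneration occurs; no argument is given that the $\binom{k+1}{2}$ determinantal equations are independent of the $1+g$ equations from $\t$ and $\nabla_z\t$, nor that the relevant fourth-order $z$-derivatives of $\t$ span enough of $\operatorname{Sym}^2 K^*$ as $\dot\tau$ varies. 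The best rigorous output of a single-condition version of this linearization is precisely the Ciliberto--van der Geer bound $\operatorname{codim}N_k\ge k+2$ (improved to $k+3$ for $k\ge g/3$), which is linear in $k$; nothing in your sketch upgrades that to the quadratic bound. There are also unaddressed degenerate strata --- points of $\Sing\T$ of multiplicity $\ge 3$, where $\operatorname{Hess}_z\t$ vanishes identically and the corank stratification collapses, and components of $\Sing\T$ that are everywhere non-reduced or singular, where the tangent-space argument giving $\operatorname{corank}\ge k$ does not apply as stated. In short: the strategy is a sensible articulation of why the conjecture is plausible, but it is not a proof, and the step you defer is exactly the step that has resisted all known techniques.
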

Notice that this conjectural bound is exact for the Jacobian locus
and for the hyperelliptic locus. This conjecture, however, seems
very hard, as even the answer to the following question is unknown
\begin{OP}[Ciliberto and van der Geer \cite{civdg}]
Is it possible that there exists some $k<g-4$ such that
$N_k=N_{k+1}$?
\end{OP}

We know that $N_{g-3}$ contains the hyperelliptic locus. What can we
say about $N_{g-2}$? Consider a decomposable\footnote{We remind that
a ppav is called decomposable if it is isomorphic (with
polarization) to a product of two lower-dimensional ppavs. The term
``reducible'' is often used instead of ``decomposable''.} ppav
$A=A_1\times A_2$. We then have
$$
 \T_A=\left(A_1\times\T_{A_2}\right)\,\cup\,\left(
 \T_{A_1}\times A_2\right)
$$
and thus
$$
 \Sing\T_A\,\supset\,\T_{A_1}\times\T_{A_2},
$$
so for decomposable ppavs $\dim\Sing\T=g-2$, i.e.~$\bigcup\limits_i
\A_i\times\A_{g-i}\subset N_{g-2}$. Since the codimension in $\A_g$
of the locus of decomposable abelian varieties is only $g-1$, the
condition of abelian variety being simple was needed in conjecture
\ref{conjcodim}. Arbarello and De Concini conjecture in \cite{ardc}
that $N_{g-2}$ is in fact equal to the locus of decomposable abelian
varieties. This was proven to be true.
\begin{thm}[Ein and Lazarsfeld \cite{eila}]
$N_{g-2}$ is equal to the locus of decomposable abelian varieties.
\end{thm}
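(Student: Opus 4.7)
The plan is to prove the non-obvious inclusion $N_{g-2}\subseteq \bigcup_i \A_i\times\A_{g-i}$; the reverse inclusion is the calculation recalled in the excerpt. I would reduce the problem to a statement about normality of $\Theta$. Since $\Theta\subset A$ is a hypersurface in a smooth variety, Serre's criterion gives that $\Theta$ is normal if and only if $\Sing\Theta$ has codimension at least two in $\Theta$, i.e.\ $\dim\Sing\Theta\le g-3$. Hence it suffices to show: if $(A,\Theta)$ is indecomposable (equivalently, $\Theta$ is an irreducible divisor, using that $h^0(A,\Theta)=1$ and the classical correspondence between reducibility of $\Theta$ and decomposability of the polarized $A$), then $\Theta$ is normal.

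The first main input is a theorem of Kollár: for any ppav $(A,\Theta)$ with $\Theta$ irreducible, the pair $(A,\Theta)$ is log canonical. This is already almost normality, but it is not enough: log canonicity permits codimension one non-normal locus. The strategy is therefore to strengthen log canonical to Kawamata log terminal under the indecomposability hypothesis; klt would give even rational singularities of $\Theta$ and in particular normality. Equivalently, I would assume $(A,\Theta)$ is log canonical but \emph{not} klt, and derive that $A$ must split.

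Concretely, if $(A,\Theta)$ is lc but not klt, the multiplier ideal $\mathcal{J}(A,\Theta)\subsetneq\mathcal{O}_A$ cuts out a non-empty subscheme $Z$ supported on the non-klt locus, which lies inside $\Sing\Theta$. Applying Nadel vanishing gives
$$
 H^i\bigl(A,\,\mathcal{J}(A,\Theta)\otimes\mathcal{O}_A(\Theta)\otimes P\bigr)=0\quad\text{for all }i>0,\ P\in\Pic^0(A).
$$
Combining this with the Green--Lazarsfeld generic vanishing theorem on abelian varieties, one analyses the cohomological support loci of $\mathcal{O}_Z$ and of $\mathcal{J}(A,\Theta)\otimes\mathcal{O}_A(\Theta)$. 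The structure theorem for such loci forces them to be finite unions of translates of subtori; pushed far enough, this forces $\mathcal{J}(A,\Theta)$ itself to be pulled back from a proper quotient $A\to A/B$ for some non-trivial abelian subvariety $B\subset A$. Since $\mathcal{O}_A(\Theta)$ is a principal polarization, such a splitting of its multiplier ideal lifts to a splitting of $(A,\Theta)$ as a product of polarized abelian varieties, contradicting indecomposability. Thus $(A,\Theta)$ must be klt, $\Theta$ is normal, and $\dim\Sing\Theta\le g-3$.

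The hard step is the passage from a non-trivial multiplier ideal with Nadel-type vanishing to an actual splitting of the polarized abelian variety. Kollár's lc result is powerful but standard input; the Nadel vanishing is formal. What really has to be worked out is the interaction between generic vanishing and the scheme-theoretic structure of the non-klt locus, to produce a subtorus along which $\Theta$ restricts trivially in an appropriate sense. This is where the Ein--Lazarsfeld analysis of cohomological support loci on abelian varieties does the real work, and where one must be careful not to merely obtain an isogeny decomposition but an honest product decomposition compatible with the principal polarization.
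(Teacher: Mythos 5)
The paper itself contains no proof of this statement: it derives only the easy inclusion (for a decomposable ppav, $\Sing\Theta\supseteq\Theta_{A_1}\times\Theta_{A_2}$, so the decomposable locus lies in $N_{g-2}$) and cites Ein--Lazarsfeld for the converse. So your proposal has to be measured against the argument of \cite{eila}. Your architecture is the right one and matches theirs: reduce, via the decomposition theorem ($A$ indecomposable iff $\Theta$ irreducible and reduced) and Serre's criterion for the hypersurface $\Theta$, to showing that an irreducible theta divisor is normal; feed in Koll\'ar's log canonicity; then run a vanishing plus generic-vanishing argument whose failure would force a positive-dimensional subtorus into the picture, incompatible with $\Theta$ being a principal polarization.

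However, the central device you propose fails as written. For an \emph{integral} divisor the multiplier ideal carries no information about its singularities: if $\mu:A'\to A$ is a log resolution, then $\mathcal{J}(A,\Theta)=\mu_*\mathcal{O}_{A'}(K_{A'/A}-\lfloor\mu^*\Theta\rfloor)=\mathcal{O}_A(-\Theta)$ for \emph{every} reduced $\Theta$, smooth or not (the floor does nothing and the projection formula kills the rest), so the subscheme it cuts out is all of $\Theta$, not something supported on $\Sing\Theta$; and the perturbed ideal $\mathcal{J}(A,(1-\epsilon)\Theta)$ is identically trivial precisely because the pair is lc. Relatedly, ``strengthen lc to klt'' is vacuous: a pair whose boundary is an integral divisor with coefficient one is never klt. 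The Nadel vanishing you invoke also fails: the required positivity is that of $\Theta+P$ minus $\Theta$, which is nef but not big, and indeed for smooth $\Theta$ the sheaf in question is just $P$, with $H^g(A,\mathcal{O}_A)\neq 0$. The object that actually detects non-normality is the \emph{adjoint} ideal $\operatorname{adj}(A,\Theta)=\mu_*\mathcal{O}_{A'}(K_{A'/A}-\mu^*\Theta+\widetilde{\Theta})$ ($\widetilde\Theta$ the strict transform), which is trivial if and only if $\Theta$ is normal with rational singularities, and which sits in the exact sequence
$$
0\to\mathcal{O}_A\to\operatorname{adj}(A,\Theta)\otimes\mathcal{O}_A(\Theta)\to\nu_*\omega_{\widetilde{\Theta}}\to 0,
$$
where $\nu:\widetilde\Theta\to\Theta$ is a resolution. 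Twisting by generic $P\in\Pic^0(A)$, using that the unique divisor in $|\Theta\otimes P|$ misses the cosupport of a nontrivial adjoint ideal for generic $P$, and applying generic vanishing to $\nu_*\omega_{\widetilde\Theta}$, one gets $\chi(\omega_{\widetilde\Theta})=0$; the Ein--Lazarsfeld theorem on varieties with vanishing holomorphic Euler characteristic and generically finite Albanese map then shows $\Theta$ is fibered by translates of a subtorus, hence is a pullback from a proper quotient of $A$ and cannot be ample --- a contradiction. Note that the endgame is a contradiction with ampleness rather than the construction of an honest polarized product decomposition, so the isogeny-versus-product worry you raise at the end does not arise in this direction of the theorem.
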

The loci $N_k$ are of great interest, but very hard to study, as
even their dimensions are still not known. From the analytical point
of view it is very hard to determine the dimension of a solution set
of a certain system of equations (singular points are where
$\t(\tau,z)={\rm grad}_z\t(\tau,z)=0$). Thus one wonders if it could
be easier to look at some local singularity conditions instead.
\begin{df}
We denote by $\Sing_k\T:=\lbrace x\in A \mid {\rm mult}_x\T\ge
k\rbrace $ the {\it multiplicity $k$ locus} of the theta divisor,
i.e.~the locus of points $z$ where the theta function, as a function
of $z$, has multiplicity at least $k$, i.e.~such that the theta
function and its partial $z$-derivatives up to order $k-1$ vanish.
By the heat equation this means that all partial $\tau$-derivatives
of the theta function up to order $\lfloor \frac {k-1}{2}\rfloor$
vanish at $(\tau,z)$.
\end{df}
Since multiplicity is a local condition, it is natural to study it
from the point of view of singularity theory and multiplier ideals.
This was done quite successfully.
\begin{thm}[Koll\'ar \cite{kollar}]\label{thmkol}
The pair $(A,\T)$ is log canonical; thus ${\rm codim}_A(\Sing_k\T)\ge
k$. In particular the multiplicity of the theta function at any
point is at most  $g$.
\end{thm}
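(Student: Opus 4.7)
The plan is to deduce the numerical statements from the log canonicity of $(A,\T)$, and then prove log canonicity by contradiction using Nadel vanishing and translation invariance. If $(A,\T)$ is log canonical and $\operatorname{mult}_x\T=k$, blowing up $A$ at $x$ gives an exceptional divisor $E\cong\P^{g-1}$ with $K_{\tilde A}=(g-1)E$ and $\pi^*\T=\tilde\T+kE$, so $E$ has discrepancy $(g-1)-k$; log canonicity forces $(g-1)-k\ge -1$, i.e.\ $k\le g$. Likewise, if $Z$ is an irreducible component of $\Sing_k\T$ of codimension $c$, blowing up $Z$ at a generic smooth point yields an exceptional divisor of discrepancy $(c-1)-k\ge -1$, giving $c\ge k$. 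Thus both bounds are formal consequences of log canonicity, and the real content is that $(A,\T)$ is lc.

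To prove log canonicity, suppose not, and let $c\in\Q\cap(0,1)$ be the log canonical threshold of $(A,\T)$, so that $(A,c\T)$ is log canonical but not klt, with non-klt locus $Z:=V(\mathcal{J}(c\T))\subsetneq A$. The key observation is that for \emph{any} $a\in A$ the line bundle $\mathcal{O}_A(\T_a)$ is numerically equivalent to $\mathcal{O}_A(\T)$, so $\mathcal{O}_A(\T_a)-c\T$ is numerically $(1-c)\T$, which is ample. Since $K_A=0$, Nadel vanishing gives $H^i(A,\mathcal{O}_A(\T_a)\otimes\mathcal{J}(c\T))=0$ for $i>0$, and the short exact sequence
\[
0\to\mathcal{J}(c\T)\otimes\mathcal{O}_A(\T_a)\to\mathcal{O}_A(\T_a)\to\mathcal{O}_Z\otimes\mathcal{O}_A(\T_a)\to 0,
\]
combined with $h^0(A,\T_a)=1$, yields the uniform bound $h^0(Z,\T_a|_Z)\le 1$ for every $a\in A$. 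This should be read as saying that a full $g$-dimensional family of ample line bundles on $Z$ carries at most one section each.

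The main obstacle is extracting a contradiction from this uniform bound. The restrictions $\{\T_a|_Z\}_{a\in A}$ form a positive-dimensional family of line bundles on $Z$, all numerically equivalent to the ample class $\T|_Z$, so $\chi(Z,\T_a|_Z)$ is a positive constant in $a$. When $\dim Z\ge 1$, one would combine generic vanishing of Green--Lazarsfeld applied to the inclusion $Z\hookrightarrow A$ (after passing to a resolution if $Z$ is singular) with the constancy of $\chi$ to force $h^0(Z,\T_a|_Z)=\chi\ge 2$ for generic $a$, contradicting the Nadel bound. The subcase $\dim Z=0$ must be handled separately, reducing to a local multiplicity statement at an isolated non-klt point, where Koll\'ar--Shokurov connectedness of log canonical centers (together with translation invariance, and possibly induction on $g$ via the abelian subvariety generated by $Z$) does the work. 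The genuinely delicate part is making the generic vanishing step effective for possibly singular and reducible $Z$, and ensuring $\chi(Z,\T|_Z)\ge 2$ rather than exactly $1$; this is where the specific structure of $\T$ as a principal polarization, with its minimal top self-intersection $\T^g=g!$, must enter in a crucial way.
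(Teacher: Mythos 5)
The survey states this theorem with only a citation to Koll\'ar's book, so there is no in-paper proof to compare against; measured against the known arguments (Koll\'ar's original, and the streamlined version in Ein--Lazarsfeld), your reduction of the two numerical statements to log canonicity via discrepancies of blow-ups is correct, and your setup for the lc claim --- take the log canonical threshold $c<1$, let $Z=V(\mathcal{J}(c\T))$, use that $\T_a-c\T\equiv(1-c)\T$ is ample and $K_A=0$ to get Nadel vanishing and hence surjectivity of $H^0(A,\mathcal{O}_A(\T_a))\to H^0(Z,\mathcal{O}_Z(\T_a))$, so $h^0(Z,\T_a|_Z)\le 1$ for every $a$ --- is exactly the standard route.

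The gap is in how you propose to extract the contradiction. You reach for generic vanishing and an inequality $\chi(Z,\T_a|_Z)\ge 2$, flagging yourself that this is delicate for singular, non-reduced, reducible $Z$ and splitting off $\dim Z=0$ as a separate case; as written this step does not close (generic vanishing is a statement about $\omega_X$ for smooth $X$ mapping to an abelian variety, there is no reason $\chi(Z,\T|_Z)\ge 2$ in general, and the $\dim Z=0$ case via connectedness is not worked out). The missing observation that makes all of this unnecessary is that $Z\subseteq\T$: away from $\operatorname{Supp}\T$ the pair $(A,c\T)$ is trivially klt, so the non-klt locus sits inside $\T=\T_0$. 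Hence the unique section $\t$ of $\mathcal{O}_A(\T)$ restricts to zero on $Z$, and surjectivity of the restriction map at $a=0$ forces $h^0(Z,\mathcal{O}_Z(\T))=0$. On the other hand, for general $a$ no component of $Z$ lies in $\T_a$, so $\t_a|_Z\ne0$ and $h^0(Z,\mathcal{O}_Z(\T_a))\ge1$; by upper semicontinuity of $h^0$ in the flat family $\lbrace\mathcal{O}_Z(\T_a)\rbrace_{a\in A}$ the locus $\lbrace h^0\ge1\rbrace$ is closed and contains a dense open set, hence is all of $A$, contradicting $h^0(Z,\mathcal{O}_Z(\T))=0$. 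This finishes the proof uniformly in $\dim Z$, with no generic vanishing, no Euler characteristic estimate, and no case division; the only input beyond your own setup is $Z\subseteq\T$ together with semicontinuity.
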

\begin{OP}
Give a direct analytic proof of this theorem, or at least of the
fact that the theta function cannot vanish at any point to order higher than $g$.
\end{OP}
Though the statement is entirely elementary, we have no idea on how
to approach this problem.
\begin{df}
We define the {\it multiplicity locus} $S_k\subset \A_g$ to be the
locus of abelian varieties for which $\Sing_k\T$ is non-empty. We
then have
$$
 \emptyset=S_{g+1}\subsetneq S_g\subseteq\ldots\subseteq S_2=N_0
 \subsetneq S_1=\A_g.
$$
\end{df}
Similarly to the discussion above for $N_{g-2}$, one can see that
for a $k$-fold product of abelian varieties we have
$\Sing_k\ne\emptyset$, thus in particular products of $g$ elliptic
curves lie in $S_g$.
\begin{thm}[Smith and Varley \cite{smva}]
$S_g=\lbrace$products of $g$ elliptic curves$\rbrace$.
\end{thm}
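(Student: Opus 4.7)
The inclusion $\supseteq$ is immediate: for $A = E_1 \times \cdots \times E_g$ with the product principal polarization, $\theta_A(z_1,\ldots,z_g)$ is proportional to $\prod_i \theta_{E_i}(z_i)$, so any $p = (q_1,\ldots,q_g)$ with $q_i \in \T_{E_i}$ satisfies ${\rm mult}_p \theta_A = g$, hence $A \in S_g$.

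For the reverse inclusion, my plan is induction on $g$, with $g=1$ trivial. Assume the result below dimension $g$ and let $(A,\T) \in S_g$ with $p \in \Sing_g\T$. The argument splits into two parts: Step~1, show that $A$ must decompose as a product of lower-dimensional ppavs (in the category of ppavs, with the product principal polarization); Step~2, apply Koll\'ar's bound from Theorem~\ref{thmkol} to each factor and invoke the induction hypothesis.

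Step~2 is clean. For $A = A_1 \times A_2$ with $g_i := \dim A_i \geq 1$, the product polarization forces $\theta_A(z_1,z_2) = c\cdot \theta_{A_1}(z_1)\theta_{A_2}(z_2)$, so if $p = (p_1,p_2)$ the multiplicities add:
\[
 g \;=\; {\rm mult}_p\theta_A \;=\; {\rm mult}_{p_1}\theta_{A_1} + {\rm mult}_{p_2}\theta_{A_2}.
\]
By Theorem~\ref{thmkol} each summand is at most $g_i$; since $g_1+g_2=g$, equality must hold in each factor, so $A_i \in S_{g_i}$. Induction then identifies each $A_i$ as a product of $g_i$ elliptic curves, and hence $A$ as a product of $g$ elliptic curves.

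The real work is in Step~1. My proposed approach is to study the tangent cone $TC_p\T \subset \mathbb{P}(T_p A) \cong \mathbb{P}^{g-1}$, which is cut out by the leading degree-$g$ term $P_g$ in the Taylor expansion of $\theta$ at $p$ (after translating the characteristic so that $p$ corresponds to the origin). Since ${\rm mult}_p\T = g = \dim A$ saturates Koll\'ar's log canonicity bound, the pair $(A,\T)$ lies on the extreme boundary of the log canonical locus; a singularity-theoretic analysis of this extremal case --- in the spirit of Ein--Lazarsfeld's treatment of $N_{g-2}$ --- should force $P_g$ to factor as a product of $g$ linear forms on $T_p A$. One then uses the heat equation to convert the vanishing of $z$-derivatives of $\theta$ at $p$ into vanishing of $\tau$-derivatives of theta constants; combined with the linear factorization of $P_g$, this pins $\tau$ down, after an $\Sp$ transformation, as block diagonal, exhibiting $(A,\T)$ as a product of principally polarized elliptic curves. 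I expect the main obstacle to be precisely this analysis of the extremal log canonical case: the claim that $P_g$ must split into linear forms is what drives the whole argument, and verifying it rigorously is where the bulk of the technical effort will lie.
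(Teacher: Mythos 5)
Your inclusion $\supseteq$ and your Step~2 are both correct, and Step~2 is exactly the right reduction: once $A=A_1\times A_2$ as a ppav, the theta function factors, multiplicities add, Koll\'ar's bound (theorem \ref{thmkol}) applied to each factor forces $\operatorname{mult}_{p_i}\theta_{A_i}=g_i$, and induction finishes. This is precisely how the survey frames the theorem, namely as a special case of the Ein--Lazarsfeld result stated immediately after it: if $\operatorname{codim}_A(\Sing_k\T)=k$ for some $k>1$ then $A$ is decomposable. Indeed, $A\in S_g$ means $\Sing_g\T\ne\emptyset$, so its codimension is at most $g$, while Koll\'ar gives at least $g$; equality holds, Ein--Lazarsfeld gives decomposability, and your Step~2 takes over.

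The genuine gap is Step~1, which is the entire content of the hard direction, and your proposal does not prove it --- it only describes a hope. Concretely: (i) you assert that saturation of the log canonicity bound ``should force'' the degree-$g$ tangent cone $P_g$ to split into $g$ linear forms, but you give no mechanism for this, and it is not how either known proof works (Smith and Varley \cite{smva} argue directly with the geometry of multiplicity-$g$ points, while Ein and Lazarsfeld \cite{eila} use multiplier ideals and generic vanishing \`a la Green--Lazarsfeld; neither establishes decomposability by factoring the tangent cone). A priori a degree-$g$ hypersurface in $\P(T_pA)\cong\P^{g-1}$ arising as a tangent cone of $\T$ has no reason to be a union of hyperplanes --- that it is one is essentially equivalent to the conclusion you are trying to prove. (ii) Even granting the linear factorization at the single point $p$, the passage ``combined with the heat equation, this pins $\tau$ down as block diagonal'' is a second unproved leap: the heat equation converts $z$-derivatives into $\tau$-derivatives of theta constants, but a local normal form of $\theta$ at one point does not by itself produce a symplectic splitting of the lattice, i.e.\ a global decomposition of the polarization. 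To close the argument you must either supply the singularity-theoretic input (in practice, Nadel vanishing for the multiplier ideal $\mathcal J((1-\e)\T)$ together with generic vanishing, which shows the non-klt locus forces a splitting by an abelian subvariety) or simply cite \cite{eila} or \cite{smva} for the decomposability step.
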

This is a special case of a more general theorem
\begin{thm}[Ein and Lazarsfeld \cite{eila}]
If for some $k>1$ we have ${\rm codim}_A(\Sing_k\T)=k$, then $A$ is
decomposable.
\end{thm}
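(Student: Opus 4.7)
The plan is to combine Koll\'ar's log canonicity of $(A,\T)$ with the theory of minimal log canonical centers (Kawamata's subadjunction) to extract from the extremal singular locus an honest abelian subvariety, and then to use Poincar\'e reducibility to split $A$.

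First I would fix an irreducible component $Z\subseteq \Sing_k\T$ of codimension exactly $k$ in $A$. Since $\T$ vanishes to order $\geq k$ along $Z$ we have $\operatorname{mult}_Z\T\geq k$, and since $(A,\T)$ is log canonical by Theorem \ref{thmkol}, the general criterion $\operatorname{mult}_Z\T\leq \operatorname{codim}_A Z$ forces $\operatorname{mult}_Z\T=k$. This is precisely the boundary condition for log canonicity: $Z$ (or a subvariety contained in it, passing through a general point) is a non-klt log canonical center of $(A,\T)$. Passing to a minimal such center, Kawamata--Ambro subadjunction yields a subvariety $Z'\subseteq Z$ that is normal with at worst rational singularities and satisfies $K_{Z'}\sim_\Q 0$, since $K_A=0$ and the lc boundary contribution from $\T$ vanishes on the minimal center.

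Second, the crucial step is to upgrade $Z'$ to a translate of an abelian subvariety. Here I would exploit the translation action of $A$ on itself together with Green--Lazarsfeld generic vanishing. The family of translates of $\T$ provides a family of log canonical centers, and because $[\T]$ is a principal polarization the associated cohomology jump loci on $Z'$ are extremely restricted. Combined with $K_{Z'}\sim_\Q 0$, rational singularities, and the fact that the Albanese map $Z'\hookrightarrow A$ is already an embedding, one deduces (this is the heart of Ein--Lazarsfeld's argument, and the main obstacle I expect) that $Z'$ is a translate of an abelian subvariety $B\subseteq A$ with $\operatorname{codim}_A B=k$.

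Third, I would conclude decomposability from the existence of such a $B$ together with $\operatorname{mult}_B\T=k=\operatorname{codim}_A B$. Because $[\T]$ is primitive it cannot contain $B$ as a fixed component, so the high multiplicity along $B$ must instead come from $\T$ being the scheme-theoretic sum of $k$ divisors each containing a translate of $B$. Using the principal polarization to choose a complementary abelian subvariety $B'\subset A$ (so that $A$ is isogenous to $B\times B'$), one checks that $\T$ restricted to $B$ and to $B'$ recovers principal polarizations on each, and the class of $\T$ on $B\times B'$ agrees with $p_1^*\T_B+p_2^*\T_{B'}$. By Poincar\'e reducibility for ppavs this forces $(A,\T)\cong (B,\T_B)\times (B',\T_{B'})$, i.e.\ $A$ is decomposable.

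The main obstacle is the middle step: translating the local multiplicity data along a codimension-$k$ subvariety into the global group-theoretic conclusion that the minimal lc center is a subtorus. Once that is in hand, the passage from ``contains an abelian subvariety on which $\T$ is very singular'' to ``splits as a product of ppavs'' follows from standard abelian variety techniques.
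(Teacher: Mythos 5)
Your overall strategy (log canonical centers, subadjunction, generic vanishing) lives in the right circle of ideas, but as written the argument has genuine gaps, one of which is an outright error. First, the subadjunction step: for a minimal log canonical center $Z'$ of $(A,\T)$, Kawamata's subadjunction gives $(K_A+\T)|_{Z'}\sim_\Q K_{Z'}+\Delta_{Z'}$ with $(Z',\Delta_{Z'})$ klt; since $K_A=0$ this reads $K_{Z'}+\Delta_{Z'}\sim_\Q \T|_{Z'}$, which is \emph{ample}, not zero --- the ``boundary contribution from $\T$'' is the restriction of an ample divisor and certainly does not vanish on the center. So $K_{Z'}\sim_\Q 0$ is false, and with it the input to your second step. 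That second step (the minimal center is a translate of an abelian subvariety) is, as you concede, the heart of the matter, and no argument for it is supplied. Finally, the third step conflates ``contains an abelian subvariety'' with ``decomposable as a ppav'': Jacobians of curves admitting correspondences, and Prym varieties, contain abelian subvarieties while being indecomposable. The assertion that multiplicity $k$ along $B$ forces $\T$ to split as a sum of $k$ divisors is essentially the reducibility of $\T$ that one is trying to prove, and the Poincar\'e reducibility step is circular: the isogeny $B\times B'\to A$ carrying $\T$ to $p_1^*\T_B+p_2^*\T_{B'}$ being an isomorphism of ppavs \emph{is} the definition of decomposability.

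The proof in \cite{eila} runs in the contrapositive direction and avoids subadjunction entirely. If $A$ is indecomposable then $\T$ is irreducible (the classical decomposition theorem: a ppav is a nontrivial product of ppavs if and only if its theta divisor is reducible). Ein and Lazarsfeld then prove that an irreducible theta divisor is normal with at worst rational singularities; the proof uses the adjoint ideal of $\T$, Nadel/Koll\'ar vanishing, the Green--Lazarsfeld generic vanishing theorem applied to a resolution of $\T$, and the numerical input $h^0(A,\T)=1$. Since $\T$ is a hypersurface, rational singularities are canonical, and blowing up $A$ along (the smooth locus of) a component $Z$ of $\Sing_k\T$ produces an exceptional divisor over $\T$ of discrepancy $\operatorname{codim}_A Z-1-\operatorname{mult}_Z\T$; canonicity forces $\operatorname{mult}_Z\T\le\operatorname{codim}_A Z-1$, i.e.\ $\operatorname{codim}_A\Sing_k\T\ge k+1$ for $k\ge 2$. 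To salvage your approach you would have to replace the subadjunction step by this (or an equivalent) use of generic vanishing, and in any case route the conclusion through the reducibility of $\T$ rather than through the mere existence of an abelian subvariety.
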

This result allows one to say something about ppavs for which
$\Sing_k\T$ has the maximal possible dimension. What happens if the
dimension is one less --- are these ppavs special in any way?
Another question is
\begin{OP}
What is the maximal $k$ for which $S_k$ contains indecomposable
abelian varieties?
\end{OP}
One can also try to ask the same question for sections of multiples
of the theta bundle on an abelian variety, rather than only for the
theta function. This has been investigated by Hacon \cite{hacon},
and Debarre and Hacon \cite{deha}, with results generalizing theorem
\ref{thmkol}. However, we note that by Riemann's theta singularity
theorem for Jacobians, and by its generalizations for Prym varieties
--- see, for example, \cite{casalaina}, the maximal multiplicity of
the theta function for Jacobians and Pryms is
$\lfloor\frac{g+1}{2}\rfloor$. Since the dimension of $\Sing\T$ for
hyperelliptic Jacobians is largest possible for indecomposable
ppavs, and Pryms lie in $N_{g-6}$ (see \cite{de1}) it is natural to
make the following
\begin{conj}
The maximal multiplicity of the theta function for indecomposable
ppavs is equal to $\lfloor\frac{g+1}{2}\rfloor$, i.e.~$S_{\lfloor\frac{g+3}{2}\rfloor}$ is a subvariety of the locus of
decomposable abelian varieties.
\end{conj}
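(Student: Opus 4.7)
The plan is to combine Koll\'ar's log canonicity (theorem \ref{thmkol}) with an Ein--Lazarsfeld style decomposability argument, exploiting the arithmetic inequality $2(g-k)<g$ satisfied by $k=\lfloor(g+3)/2\rfloor$. Assume for contradiction that $(A,\T)$ is indecomposable of dimension $g$ and that $\T$ has multiplicity at least $k$ at some point, which after translation we take to be the origin. Using the $\pm1$ involution, which preserves $\T$ up to translation by a $2$-torsion point, high-multiplicity points come in symmetric pairs --- a fact useful for running local dimension counts. By theorem \ref{thmkol} the pair $(A,\T)$ is log canonical, so any component $Z$ of $\Sing_k\T$ through the origin has $\operatorname{codim}_A Z\ge k$. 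In particular $\dim Z\le g-k$, and because $2(g-k)<g$, the difference map $Z\times Z\to A$, $(p,q)\mapsto p-q$, has image $W$ of dimension strictly less than $g$.

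The heart of the argument is then to promote $W$ to a proper abelian subvariety $B\subsetneq A$ that stabilizes $Z$. Following the multiplier ideal strategy of Ein--Lazarsfeld \cite{eila}, one would study the sheaf $\mathcal{J}:=\mathcal{J}((1-\e)\T)$, which for $\e>0$ small has cosupport containing $Z$. Since $2\operatorname{codim}_A Z>g$, a Pareschi--Popa style analysis of the cohomological support loci of $\mathcal{J}\otimes\mathcal O_A(\T)$ should force them to be unions of torsion translates of a fixed abelian subvariety $B$, which one identifies with $\operatorname{Stab}(Z)^0$. Poincar\'e reducibility then yields an isogeny decomposition $A\sim B\times(A/B)$, and one upgrades this to a splitting as ppavs by analyzing how $\T$ restricts to the two factors, using the heat equation to convert the $z$-multiplicity of $\t$ at the origin into vanishing of its $\tau$-derivatives.

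The hard part is both promoting $W$ to an abelian subvariety and upgrading the resulting isogeny splitting to a splitting as ppavs. Ein--Lazarsfeld's treatment of $N_{g-2}$ works precisely because the maximal possible dimension $g-2$ of $\Sing\T$ is realized in the extremal case they study; in our higher-multiplicity setup $\Sing_k\T$ may a priori be considerably smaller than the expected $g-k$ (Koll\'ar's bound controls only the codimension from below), and so the multiplier ideal machinery is much harder to calibrate. A promising auxiliary input is the Riemann--Kempf singularity theorem for Jacobians, together with its Prym analogue \cite{casalaina}, which suggest that $k$-fold vanishing of $\t$ at a point should, outside the decomposable locus, force an abelian factor; pinning down a uniform statement of this kind for arbitrary indecomposable ppavs is what currently leaves the conjecture open.
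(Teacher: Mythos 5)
The statement you are addressing is stated in the paper as a \emph{conjecture}, not a theorem: the author explicitly writes that he knows of no approach to it ``short of trying to define a Prym-like construction for arbitrary ppavs, which would be very hard, and likely not possible.'' So there is no proof in the paper to compare against, and your text is, by your own admission in its final paragraph, a strategy sketch rather than a proof. The honest thing to say is that the conjecture remains open and that your sketch does not close it.

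The concrete gap is the middle step. From Koll\'ar's bound $\operatorname{codim}_A(\Sing_k\T)\ge k$ you correctly get $\dim Z\le g-k$ and hence $\dim(Z-Z)<g$ for $k=\lfloor\frac{g+3}{2}\rfloor$, but a constructible subset $W=Z-Z$ of dimension less than $g$ need not generate a proper abelian subvariety, and nothing in the multiplier-ideal or generic-vanishing machinery you invoke forces $\operatorname{Stab}(Z)^0$ to be positive-dimensional. The Ein--Lazarsfeld theorem on $N_{g-2}$ that you model this on is an extremal statement: it applies precisely when $\Sing\T$ (equivalently a component of the cosupport of the relevant multiplier ideal) attains the maximal possible dimension permitted by log canonicity, and their argument produces the abelian subvariety from that equality, not from an inequality. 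In your setting $\Sing_k\T$ is only bounded \emph{above} in dimension, which is the wrong direction for their mechanism; this is exactly why the paper's own theorem of Ein--Lazarsfeld (``if ${\rm codim}_A(\Sing_k\T)=k$ then $A$ is decomposable'') covers only the equality case and says nothing about ppavs where the multiplicity locus is smaller than expected. The subsequent step --- upgrading an isogeny $A\sim B\times(A/B)$ to a splitting of principally polarized abelian varieties --- is also nontrivial and is not supplied by the heat equation remark. The appeal to Riemann--Kempf and its Prym analogue only explains why the bound $\lfloor\frac{g+1}{2}\rfloor$ is sharp on Jacobians and Pryms; it gives no handle on an arbitrary indecomposable ppav. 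Your sketch is a reasonable account of why one might believe the conjecture, but it should not be presented as a proof.
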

We do not know of an approach to this conjecture short of trying to
define a Prym-like construction for arbitrary ppavs, which would be
very hard, and likely not possible. Another obvious question
to ask is
\begin{OP}
What is the dimension of $S_k$? Is it possible to have
$S_k=S_{k+1}$?
\end{OP}
These are also entirely open. Some attempts to study these conditions by degeneration techniques were made in \cite{amsp}, \cite{grsm4}.

\begin{dsc}[{\bf Seshadri constants}]

The above stratifications of $\A_g$ encode some geometric
information about the theta divisor. The multiplicity is a local
invariant of the theta divisor, but from the point of view of the
modern study of singularities, the multiplicity may not be the best
invariant. Something perhaps more intrinsic is the following.
\end{dsc}
\begin{df}
Given a variety $X$ with a divisor $D$ the {\it Seshadri constant}
is defined to be
$$
 \e(X,D):=\inf\limits_{x\in C\subset X} \frac{C.D}{{\rm
 mult}_x(C)},
$$
where the infinum is taken over all points $x\in X$, and all curves
$C\subset X$ passing through the point $x$.

This is a very important invariant of a pair $(X,D)$ --- for example
the Seshadri constant is positive if and only if $D$ is ample.
\end{df}
One can study the Seshadri constants of general ppavs and then of
special loci in $\A_g$, and see whether the Seshadri constants in
fact capture some geometric information.

\begin{thm}[Lazarsfeld \cite{lazarsfeld}]
There exists a constant $c$ independent of $g$ such that for the Jacobian $(J,\T)$ of any curve of genus $g$ the Seshadri constant $\e(J,\T)\le c\sqrt{g}$.
\end{thm}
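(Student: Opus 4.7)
The plan is to construct, for each Jacobian $(J,\T)$ of a curve $X$ of genus $g$, an explicit test curve $C\subset J$ passing through a chosen base point $x$ for which the ratio $(C\cdot\T)/\mathrm{mult}_x C$ is $O(\sqrt g)$. Since translations of $J$ preserve the numerical class of the theta divisor, one may take the base point to be $x=0$, and the bound on $\e(J,\T)$ then follows from the single pair $(C,0)$.

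First I would record the crude construction. The Abel--Jacobi embedded curve $X\hookrightarrow J$ (based at any $p_0\in X$) has class $[\T]^{g-1}/(g-1)!$ by Poincar\'e's formula, hence $X\cdot\T=g$, while its multiplicity at any smooth point is $1$; this already gives $\e(J,\T)\le g$. To sharpen, I would bring in the Brill--Noether subvarieties $W_d:=\mathrm{image}(X^{(d)}\to J)$. A linear series $g^r_d$ on $X$ produces an embedded $\P^r\subset W_d$, and any line in such a $\P^r$ maps to a rational curve $C\subset J$ with $C\cdot\T=d$. Translating so that a divisor of the series sits over $0\in J$ already shows $\e(J,\T)\le d$.

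Next I would balance the dimension $r$ of the series, its degree $d$, and the multiplicity at the origin using the Riemann--Kempf formula $\mathrm{mult}_{[D]}W_d=h^0(X,\mathcal O(D))$, in order to replace lines in $\P^r$ by rational curves of higher degree $n$ with matching osculation order at $0$. Taking $r$ of order $\sqrt g$ and $d$ minimal with $\rho(g,r,d)=g-(r+1)(g-d+r)\ge 0$ (so Brill--Noether existence applies), then choosing a rational curve of degree $n\sim\sqrt g$ in the corresponding $\P^r$ that passes through a point where the linear series has maximal $h^0$, one expects $\mathrm{mult}_0 C\gtrsim n$ while $C\cdot\T=nd$; the Seshadri ratio then becomes $d/\sqrt g$, which is $O(\sqrt g)$ in this range.

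The main obstacle will be rigorously matching the multiplicity at $0$ with the intersection number: one must verify that the chosen rational curve genuinely osculates $\T$ at $0$ to the asserted order rather than merely passing through it transversally, and that its image is reduced and irreducible so that $\mathrm{mult}_0 C$ is well-defined in the Seshadri infimum. A further subtlety is that the implicit constants in Brill--Noether and Riemann--Kempf must be uniform across all $X$; curves of exceptional Brill--Noether behaviour (hyperelliptic, trigonal, and so on) have to be handled using their concrete linear systems, but for these the bound on $\e(J,\T)$ only becomes stronger. Once a curve $C$ with the asserted ratio has been produced for every $X$, the inequality $\e(J,\T)\le c\sqrt g$ follows directly from the definition of the Seshadri constant.
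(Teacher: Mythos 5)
The survey gives no proof of this statement (it is quoted from \cite{lazarsfeld}), but your proposal contains a fatal gap: the central construction does not exist. The $\P^r$ attached to a $g^r_d$ is the linear system $|D|\subset X^{(d)}$, i.e.\ a \emph{fiber} of the Abel--Jacobi map $X^{(d)}\to J$; all divisors in it are linearly equivalent, so the entire $\P^r$ is contracted to the single point $[D]\in W_d$ rather than embedded in $W_d$. (Indeed, this contraction is exactly the source of the Riemann--Kempf singularity $\mathrm{mult}_{[D]}W_d=\binom{g-d+r}{r}$ that you want to invoke.) More fundamentally, an abelian variety contains no rational curves at all: any morphism $\P^1\to J$ lifts to the universal cover $\C^g$ because $\P^1$ is simply connected, and is therefore constant. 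So there are no ``rational curves of degree $n\sim\sqrt g$ in $J$'' to osculate $\T$ with, and everything in your argument after the correct crude bound $\e(J,\T)\le X\cdot\T=g$ collapses.

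What you are missing is a positive-dimensional subvariety of $J$ with a genuinely singular point, and the natural supply of these comes from the $W_d$ and their differences, not from rational curves. The standard route to the $\sqrt g$ bound uses the general inequality $\e(A,L;x)\le\left(L^d\cdot V/\mathrm{mult}_xV\right)^{1/d}$, valid for any $d$-dimensional irreducible $V\ni x$, applied to the difference surface $V=X-X$ at $x=0$: by the Pontryagin product formula $\T^2\cdot(X-X)=2g(g-1)$ for $X$ non-hyperelliptic (half that in the hyperelliptic case, where the difference map has degree $2$), while the projectivized tangent cone of $X-X$ at $0$ is swept out by the limiting directions of $u(p)-u(q)$ as $q\to p$, i.e.\ by the tangent directions of $X\subset J$, hence contains the canonical curve and has degree at least $2g-2$; therefore $\e(J,\T)\le\sqrt{2g(g-1)/(2g-2)}=\sqrt g$. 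Lazarsfeld's paper itself emphasizes a different mechanism, bounding $\e(A,L)$ above by $\tfrac{\pi}{4}$ times the squared minimal period length via a Gromov-width/symplectic-ball argument and then feeding in the Buser--Sarnak bound on minimal periods of Jacobians; either way, no step resembles the linear-series balancing you propose.
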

In comparison, for general ppavs we have
\begin{thm}[Lazarsfeld \cite{lazarsfeld}, see also Bauer \cite{bauer}]
For a general ppav the Seshadri constant is at least of the order of
a constant times $\sqrt[g]{g!}$.
\end{thm}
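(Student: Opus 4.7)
The plan is to view the Seshadri constant via its infimum characterization
$$
 \e(A,\T)=\inf_{0\in V\subsetneq A,\, \dim V>0}\left(\frac{\T^{\dim V}\cdot V}{\operatorname{mult}_0 V}\right)^{1/\dim V},
$$
where by translation-invariance on an abelian variety we may fix the basepoint to be the origin. The choice $V=A$ immediately recovers the matching upper bound $\e(A,\T)\le\sqrt[g]{g!}$, so any universal lower bound of the form $c\sqrt[g]{g!}$ is sharp up to a multiplicative constant; the whole task is to produce such a bound on the quantity in brackets for every positive-dimensional $V$ through $0$, uniformly on a dense open of $\A_g$.

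For the lower bound I would exploit that a very general ppav has Neron--Severi group $\Z\T$. Standard results on algebraic cohomology of abelian varieties with Picard rank one then force the class of an irreducible codimension-$c$ subvariety to be $[V]=k\cdot\T^c/c!$ for some positive integer $k$, and in particular $\T^{g-c}\cdot V=k\,g!/c!$. So the ratio inside the infimum factors as $(k\,g!/c!)/\operatorname{mult}_0 V$, and the problem collapses to bounding $\operatorname{mult}_0 V$ in terms of $k$. Once one has an inequality of the shape $(\operatorname{mult}_0 V)^{\,g-c}\le k\cdot\beta(g,c)$ with well-controlled $\beta(g,c)$, direct substitution together with a Stirling-style arithmetic manipulation yields the desired $c\sqrt[g]{g!}$ bound, uniformly in the codimension $c$.

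The heart of the proof, and where the main obstacle lies, is precisely this multiplicity estimate. For divisors ($c=1$) it is immediate, since $V\in|k\T|$ and $\operatorname{mult}_0\T=1$ by Mumford, so $\operatorname{mult}_0 V\le k$ at once. For $c\ge 2$ the strategy I would follow is a \emph{moving-lemma} argument specific to abelian varieties: translate $V$ by sufficiently general points $t_1,\ldots,t_c\in A$, so that $V\cap(V+t_1)\cap\cdots\cap(V+t_c)$ becomes a proper $0$-dimensional intersection whose total degree is computable purely from $[V]^{\,c+1}$, hence from $k$ and $c!$. Localizing this intersection at the origin via Samuel-type multiplicity inequalities converts the computation into exactly the required bound $(\operatorname{mult}_0 V)^{\,c+1}\lesssim (k/c!)^{c+1}\cdot g!$.

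Lazarsfeld packages this argument cleanly using multiplier ideals on the blowup $\mu:\widetilde A\to A$ of the origin and a Kodaira-type vanishing for $\mu^*(k\T)-\alpha E$, which after $h^0$-counting against the Riemann--Roch dimension $h^0(A,k\T)=k^g$ produces the first such universal constant $c>0$. Bauer's subsequent improvement refines the constant by degenerating $(A,\T)$ to a self-product of elliptic curves, where subvarieties split into boxes of elliptic-curve factors and all Seshadri data can be controlled explicitly; semicontinuity of Seshadri constants in families then transfers the estimate back to a dense open of $\A_g$. Either route should yield a universal $c>0$, independent of $g$, for which the asserted inequality $\e(A,\T)\ge c\sqrt[g]{g!}$ holds generically.
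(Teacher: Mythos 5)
The paper itself gives no proof of this statement --- it is a survey item, cited to \cite{lazarsfeld} and \cite{bauer} --- so there is no internal argument to compare against; but your proposed route is not the one taken in either source, and as written it has genuine gaps. Your reduction is fine up to a point: the infimum-over-subvarieties formula for $\e$, the upper bound $(g!)^{1/g}$ from $V=A$, and the fact that for a very general ppav every irreducible codimension-$c$ subvariety has class $k\,\T^c/c!$ with $k\in\Z_{>0}$, hence $\T^{g-c}\cdot V=k\,g!/c!$, are all correct, and you rightly identify the multiplicity bound as the crux. But the step you offer for it does not work. Intersecting $c+1$ general translates of a codimension-$c$ subvariety produces, when proper, a locus of codimension $c(c+1)$, which is $0$-dimensional only when $c(c+1)=g$; and, more fundamentally, translates by \emph{general} $t_i$ do not pass through the origin, so there is nothing to localize at $0$, whereas translates chosen to pass through $0$ need not meet properly --- this is exactly the known obstruction in such arguments, and Samuel-type multiplicity inequalities do not circumvent it. A numerical check shows how little room there is: for a curve $C$ with $[C]=k\,\T^{g-1}/(g-1)!$ one has $\T\cdot C=kg$, so the desired conclusion forces $\operatorname{mult}_0C$ to be at most an absolute constant times $k$ (roughly $ek$), a bound which no soft degree computation yields.

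The closing paragraph also misdescribes the actual proofs. Lazarsfeld does not use multiplier ideals on the blowup; he proves $\e(A,\T)\ge\frac{\pi}{4}\,m(A,\T)^2$, where $m(A,\T)$ is the minimal length of a period in the flat metric determined by the polarization, via the McDuff--Polterovich symplectic ball-packing results, and then shows by a Minkowski--Hlawka type argument that a general period lattice has $m^2$ at least a constant times $(g!)^{1/g}$; Bauer's refinement is likewise lattice-theoretic, which is why both titles mention periods. (Note that counting sections, $h^0(A,k\T)=k^g$ against $\binom{m+g-1}{g}$ conditions, produces a divisor in $|k\T|$ with large multiplicity at a point, which bounds $\e$ from \emph{above}, not below.) Finally, degeneration to a product of elliptic curves cannot give the lower bound: such a product contains an elliptic curve $E$ with $\T\cdot E=1$, so its Seshadri constant equals $1$, and semicontinuity transfers only the useless inequality $\e\ge 1$ to the general fiber.
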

There is also the following conjecture.
\begin{conj}[Debarre \cite{debarreFANO}, following Lazarsfeld]
For $g\ge 4$, if $\e(A,\T)<2$, then either $A$ is decomposable, or
it is a hyperelliptic Jacobian.
\end{conj}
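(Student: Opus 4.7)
The plan is to prove the contrapositive: if $(A,\T)$ is an indecomposable ppav of dimension $g\ge 4$ and $\e(A,\T)<2$, then $(A,\T)$ is a hyperelliptic Jacobian. By definition of the Seshadri constant there exists an irreducible curve $C\subset A$ and a point $x\in C$ with $C\cdot\T<2\,\operatorname{mult}_x(C)$; translating so that $x=0$ and setting $m:=\operatorname{mult}_0(C)$, we have $C\cdot\T\le 2m-1$. The goal is now to use this numerical inequality, together with the minimal-class theory for curves in ppavs, to force $A$ to be a hyperelliptic Jacobian.

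First I would handle the case where $C$ generates $A$ as an abelian variety. The Matsusaka--Ran inequality then gives $C\cdot\T\ge g$ with equality if and only if $(A,\T)$ is the Jacobian of the normalization $\tilde C$ and $C$ is its Abel--Jacobi image. Combined with the Seshadri inequality this yields $m\ge(g+1)/2$, a very strong constraint since the Abel--Jacobi image of a smooth curve has multiplicity one at every point. In the equality case, the existence of an $m$-fold point on $C\subset J(\tilde C)$ with $m\ge(g+1)/2$ would force, via Brill--Noether / Riemann--Roch on $\tilde C$, the existence of a $g^1_2$, i.e.~it would force $\tilde C$ to be hyperelliptic. The non-equality range $g<C\cdot\T\le 2m-1$ must be excluded by appealing to recent results on curves of small but non-minimal cohomology class (Pareschi--Popa, Debarre, Casalaina-Martin, H\"oring), which should force $(A,\T)$ into loci (Pryms, intermediate Jacobians) of too high codimension to be compatible with $g\ge 4$ and the multiplicity constraint.

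The delicate case is when $C$ generates a \emph{proper} abelian subvariety $B\subsetneq A$: indecomposability of $A$ as a ppav does not rule this out, because the induced polarization on $B$ need not be principal. The approach would be induction on $g$. One applies the analysis of the previous paragraph to $(B,\T|_B)$ equipped with its induced (possibly non-principal) polarization, concluding that $B$ is (up to isogeny) a hyperelliptic Jacobian. One then uses the Poincar\'e reducibility theorem and the existence of the complementary abelian subvariety $B^\perp$ under the principal polarization of $A$ to argue that $A$ indecomposable as a ppav forces incompatibility with such a subvariety $B$, unless $B=A$. The base cases $g=2,3$ (outside the conjecture's range but needed for the induction) are classical and should be verified by hand.

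The hardest part will be the intermediate range $g<C\cdot\T\le 2m-1$, which is genuinely open territory: curves of small but non-minimal cohomology class in a ppav are closely tied to the Schottky problem and to the characterization of Prym loci inside $\A_g$, both of which are very hard for large $g$. A secondary obstacle is the subvariety case, where one must carefully track how a non-principal polarization on $B$ interacts with the principal polarization on $A$ --- this essentially asks for a refined Poincar\'e decomposition with bounds on the induced polarization's type. It is quite possible that the conjecture, in its full generality, requires as input a solution of sub-problems related to the Schottky problem that are themselves conjectural (for instance, a sharp form of Debarre's minimal cohomology class conjecture), which is presumably why this remains open.
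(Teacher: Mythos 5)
This statement is an open \emph{conjecture} in the paper (Debarre, following Lazarsfeld); the survey contains no proof of it, and the only route it indicates is the remark that the $\Gamma_{00}$ conjecture of van Geemen and van der Geer would imply this characterization of hyperelliptic Jacobians, with Krichever's methods suggested as a possible way to attack $\Gamma_{00}$. So there is no proof in the paper to compare yours against, and your proposal is not a proof either: you yourself defer the range $g<C\cdot\T\le 2m-1$ to ``results on curves of small but non-minimal cohomology class,'' but no such results cover this range in general for large $g$ --- the classification of small non-minimal classes is itself conjectural --- and this deferred range is where the entire content of the conjecture lives.

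Beyond the honest deferral, there is a concrete misstep in the one case you do try to close. In the Matsusaka--Ran equality case $C\cdot\T=g$, the conclusion is that $C$ is (a translate of) the Abel--Jacobi embedding of a \emph{smooth} genus $g$ curve; hence $\operatorname{mult}_x(C)=1$ at every point, and the Seshadri inequality $C\cdot\T\le 2m-1$ would read $g\le 1$, impossible for $g\ge 4$. You note the smoothness yourself and then nevertheless invoke ``an $m$-fold point on $C\subset J(\tilde C)$'' to produce a $g^1_2$ --- these two statements are incompatible, and the equality case is simply vacuous here. The hyperelliptic Jacobians that the conjecture is about do \emph{not} arise from minimal-class curves: the curve witnessing $\e<2$ on a hyperelliptic Jacobian is the image of $\tilde C$ under Abel--Jacobi based at a Weierstrass point followed by multiplication by $2$, which has class four times the minimal class, degree $4g$ against $\T$, and a point of multiplicity $2g+2$ at the origin, giving $\e\le 2g/(g+1)<2$. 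In other words, the only source of the desired alternative ``$A$ is a hyperelliptic Jacobian'' sits exactly in the intermediate range you set aside as intractable, so the skeleton as written could never reach that conclusion. The proper-subvariety case also needs more than Poincar\'e reducibility: indecomposability as a ppav does not exclude abelian subvarieties, the induced polarization on $B$ need not be principal, so the Matsusaka--Ran input is unavailable there, and one would need a genuinely different argument (e.g., Nakamaye-type lower bounds for non-principal polarizations) to rule this case out.
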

\begin{rem}
It appears that recent results of Krichever \cite{krichever},
\cite{trisecant} provide techniques that could potentially be applied
in an attempt to prove the so-called $\Gamma_{00}$ conjecture of van
Geemen and van der Geer \cite{vgvdg}, which is closely related to the
half-degenerate case of the trisecant conjecture. As pointed out in
\cite{debarreFANO}, the $\Gamma_{00}$ conjecture would imply this
characterization of hyperelliptic Jacobians.
\end{rem}

This leads one to hope that perhaps a characterization of Jacobians
by Seshadri constants could be possible, or that one could better
understand the stratification of $\A_g$ by the value of the Seshadri
constant. However, this is not so simple:
\begin{thm}[Debarre \cite{debarreFANO}, see also Lazarsfeld \cite{lazarsfeld} for Jacobians]
There exist Jacobians with Seshadri constants at least constant
times $\ln g$. However, in each genus $g\ge 4$ there exist ppavs
that are not Jacobians, but with Seshadri constant equal to 2.
\end{thm}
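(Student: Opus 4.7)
The two assertions are largely independent, and I treat them separately.

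\textbf{First assertion.} My plan is to exhibit Jacobians of curves of high gonality and use a gonality-based lower bound on the Seshadri constant. If $\Gamma \subset J(C)$ is an irreducible curve through a point $x$, then via the Abel-Jacobi structure it induces a linear series on $C$, so that a small ratio $(\Gamma \cdot \Theta)/\mathrm{mult}_x(\Gamma)$ forces a pencil of correspondingly small degree, contradicting a large gonality $d(C)$. Since a general curve of genus $g$ has $d(C) = \lfloor(g+3)/2\rfloor$, which dominates $\log g$, the desired logarithmic bound is a very weak consequence, and I would invoke the precise Seshadri-versus-gonality comparison of Lazarsfeld in \cite{lazarsfeld}.

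\textbf{Second assertion.} For each $g \geq 4$, my plan is to construct a ppav $(A,\Theta_A)$ containing a smooth elliptic curve $E' \subset A$ with $E' \cdot \Theta_A = 2$, and to show that for a generic such $A$ both $\e(A,\Theta_A)=2$ and $A$ is not a Jacobian. Concretely, I take a simple general ppav $(B,\Theta_B)$ of dimension $g-1$, an elliptic curve $E$, and form $A$ as a ppav isogenous to $E \times B$ via a suitable isotropic finite subgroup of $(E\times B)[n]$, selected so that the image $E' \subset A$ of the elliptic factor satisfies $E' \cdot \Theta_A = 2$ (rather than the direct-product value $1$). The upper bound $\e(A,\Theta_A)\le 2$ is then immediate from $E'$. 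For the matching lower bound I would invoke a Nakamaye-type analysis of Seshadri constants on abelian varieties in terms of abelian subvarieties: simplicity of $B$ leaves only $E'$, a complementary $(g-1)$-dimensional $B' \subset A$, and $A$ itself as candidates; the contribution from $A$ is of order $\sqrt[g]{g!} > 2$ for $g\ge 4$, that from $E'$ is exactly $2$, and $\Theta_A|_{B'}$ can be arranged positive enough to exceed $2$ by the choice of isogeny. Non-abelian curves with small $\Theta$-slope are then reduced, via projection $A \to A/E'$, to curves in the elliptic factor, where the ratio is $2$. Finally, non-Jacobianness follows from a dimension count: the constructed family has dimension $1 + g(g-1)/2$, while the intersection with the Jacobian locus is contained in the bielliptic Jacobian locus of dimension $2g-2$, and the difference $(g-2)(g-3)/2 \geq 1$ for $g \geq 4$ shows a general member is not a Jacobian.

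The principal obstacle I anticipate is the lower bound $\e(A,\Theta_A) \geq 2$: handling the finitely many abelian subvarieties is routine once the isogeny is calibrated, but ruling out arbitrary irreducible curves on $A$ of $\Theta$-slope less than $2$ is substantially harder. The argument must exploit the simplicity of $B$ together with a careful choice of the isotropic subgroup, both to ensure $\Theta_A|_{B'}$ remains sufficiently positive for no small-slope curves to appear on $B'$, and to rule out exotic curves of higher geometric genus, or those arising as pullbacks of pencils from intermediate quotients, that could give ratio below $2$.
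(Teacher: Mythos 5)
The survey states this theorem with attributions only and contains no proof of its own, so your proposal can only be checked against the cited sources; doing so shows that your plan for the first assertion rests on a comparison that does not exist and, in the form your heuristic suggests, cannot exist. You propose a lower bound for $\e(JC,\Theta)$ in terms of the gonality of $C$, strong enough that the generic gonality $\lfloor(g+3)/2\rfloor$ makes $\ln g$ ``a very weak consequence.'' But Lazarsfeld's theorem quoted immediately before this one gives $\e(JC,\Theta)\le c\sqrt{g}$ for \emph{every} curve of genus $g$, so a comparison in which $\e$ grows like the gonality --- which is what ``a small ratio forces a pencil of correspondingly small degree'' would yield --- is false for general curves; and the open problem stated immediately after this theorem, namely whether generic Jacobians have Seshadri constant growing faster than $\ln g$, is precisely what any gonality bound beating $\ln g$ would settle, so no such bound is known. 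There is no Seshadri-versus-gonality comparison in \cite{lazarsfeld}; the lower bound there is metric. Lazarsfeld proves $\e(A,\Theta)\ge\frac{\pi}{4}\,m(A)$, where $m(A)$ is the minimal squared length of a nonzero period in the flat metric determined by the polarization, and one then invokes the theorem of Buser and Sarnak producing (arithmetic) Riemann surfaces whose Jacobian period lattices satisfy $m\ge c\ln g$. Since Buser and Sarnak also show that \emph{every} Jacobian has a period with $m=O(\ln g)$, the logarithm is the ceiling of this method, not a weak corollary of something stronger.

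For the second assertion your construction is essentially the right one and matches \cite{debarreFANO}: a ppav $A$ isogenous to $E\times B$ with $B$ general and simple of dimension $g-1$, rigged so that the embedded elliptic curve $E'$ has $E'\cdot\Theta_A=2$, gives $\e(A,\Theta_A)\le 2$ for free, and your dimension count $\frac{g(g-1)}{2}+1>2g-2=\dim\{\text{bielliptic Jacobians}\}$, valid exactly for $g\ge4$, correctly shows a general member is not a Jacobian. The gap is the one you flag yourself: the lower bound $\e\ge 2$. Your proposed reduction --- project an irreducible curve $\Gamma\subset A$ to $A/E'$ and conclude that the ratio is computed on the elliptic factor --- does not work, since the projection controls neither $\Gamma\cdot\Theta_A$ nor ${\rm mult}_x\Gamma$, and ``calibrating the isogeny'' says nothing about curves not contained in any abelian subvariety. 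This step is the actual content of Debarre's argument and requires his structural analysis, in the spirit of Nakamaye, of curves computing a Seshadri ratio below $2$ on an abelian variety --- the same circle of ideas behind the conjecture, quoted just after this theorem, that $\e(A,\Theta)<2$ forces $A$ to be decomposable or a hyperelliptic Jacobian. As written, your plan implicitly assumes the hard half of that analysis.
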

\begin{OP}
What is the actual order of growth of the Seshadri constants for
generic Jacobians? We know it is between $\ln g$ and $\sqrt{g}$, but
it seems not much more is known.
\end{OP}
Thus the stratification by the value of the Seshadri constant is
also quite complicated. We believe that, if possible, giving a
meaningful answer to the following loosely-phrased question would be
extremely useful in understanding the geometry of $\A_g$.
\begin{OP}
Define a stratification of $\A_g$ with geometrically tractable strata,
i.e.~such that the number of the strata, and at least their dimensions are computable. Try to also say
something about the special properties of the geometry of ppavs in
each strata, perhaps inductively in the stratification.
\end{OP}

\section{A glimpse of $\A_g$ in finite characteristic}\label{charp}
In this section we very briefly list the differences between the
results over $\C$ that we discussed so far, and the case of the base
field of finite characteristic. There is vast literature, and lots
of other interesting questions on $\A_g$ in finite characteristic
--- we refer to \cite{vdgoo}, \cite{oort1}, \cite{oort2},
\cite{vdgbook} for more details, reviews, and further references.
Here we just list what happens --- from now on we are always talking
about characteristic $p$.

\smallskip
The concept of a ppav is still defined, and the algebraic definition
of the moduli space $\A_g$ still makes sense. However, the universal
cover of a ppav is no longer $\C^g$, and thus the discussion about
period matrices, lattices, the Siegel upper half-space and the
symplectic group action no longer applies. There is, however, a way
to define theta functions algebraically over any base field, though
not all the techniques used in working with holomorphic theta
functions are applicable.

The Satake and toroidal compactifications are defined over arbitrary
base fields; the theory of Siegel modular forms and induced
embeddings as we gave it is specific to the base field $\C$, but
there is a concept of modular forms in finite characteristic.

The results on the nef cones of $\A_g^*$ and $\AP$ hold in any
characteristic. However, the resolution of singularities in finite
characteristic is not known, and the minimal model program is not
established, so we cannot speak of the canonical models anymore.
Neither does the discussion of effective divisors and Kodaira
dimension/general type issues carry over to the case of finite
characteristic.

\smallskip
The study of subvarieties of $\A_g$ in finite characteristic is
entirely different. Recall that over $\C$ by theorem \ref{oort}
$\A_g$ does not have a closed subvariety of codimension $g$, in
stark contrast to the following.
\begin{thm}[Koblitz \cite{koblitz} --- dimension, Oort \cite{oort0} --- completeness]
In finite characteristic the moduli space $\A_g$ has a complete
subvariety of codimension $g$ --- the locus of ppavs that do not
have points of order $p$ different from zero.
\end{thm}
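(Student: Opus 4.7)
Let $V_0 \subset \A_g$ denote the reduced closed substack parametrizing ppavs with vanishing $p$-rank $f(A) := \dim_{\mathbb{F}_p} A[p](\bar k) = 0$; it is closed by upper semi-continuity of $f$. The plan is to establish two essentially independent statements: $\operatorname{codim}_{\A_g} V_0 = g$ (Koblitz), and $V_0$ is proper over $\operatorname{Spec} \mathbb{F}_p$ (Oort).

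For Koblitz's codimension I would introduce the Hasse-Witt operator $H$ on the rank-$g$ bundle $\E^\vee \cong R^1\pi_* \mathcal{O}_{\X_g}$: fiberwise this is the Frobenius-semilinear endomorphism of $H^1(A,\mathcal{O}_A)$ induced by absolute Frobenius, and the $p$-rank $f(A)$ equals the stable rank of its iterates. Thus $V_0$ is cut out by the nilpotence condition $H^g = 0$. Since the nilpotent cone in the space of $g \times g$ matrices has codimension $g$, standard degeneracy-locus estimates applied to the Frobenius-twisted map $H$ yield $\operatorname{codim}_{\A_g} V_0 \le g$. To show every irreducible component has codimension exactly $g$, I would exhibit an explicit $p$-rank zero ppav --- for instance $E^g$ with $E$ a supersingular elliptic curve and the product polarization --- and use Serre-Tate theory together with a Dieudonn\'e-module computation to show that the formal deformation space of this point inside $V_0$ has dimension exactly $g(g-1)/2 = \dim \A_g - g$.

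For Oort's completeness I would apply the valuative criterion. Given a map $\operatorname{Spec} K \to V_0$ from the fraction field of a complete DVR $R$, the goal is to extend it to $\operatorname{Spec} R$. By Grothendieck-Raynaud semistable reduction, after a finite base change the associated ppav $A/K$ extends to a semiabelian scheme $\mathcal{G}/R$ whose special fiber fits in $1 \to T \to \mathcal{G}_0 \to B \to 0$ with $T$ a torus of some rank $r$. Raynaud's rigid-analytic uniformization then presents $A^{\mathrm{an}}$ as $\tilde{\mathcal{G}}_\eta^{\mathrm{an}}/M$ where $M$ is a period lattice of rank $r$, and the snake lemma applied to multiplication by $p$ gives an exact sequence
\[
  0 \to \tilde{\mathcal{G}}_\eta[p](\bar K) \to A[p](\bar K) \to M/pM \to 0,
\]
whence $f(A) \ge \dim_{\mathbb{F}_p}(M/pM) = r$. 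The hypothesis $f(A) = 0$ therefore forces $r = 0$, so $\mathcal{G}/R$ is already an abelian scheme and the original map extends; $V_0$ satisfies the valuative criterion and is proper.

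The main obstacle is the sharpness in the codimension statement: the Hasse-Witt picture delivers $\operatorname{codim} V_0 \le g$ essentially for free, but showing that no component is smaller requires a genuine infinitesimal computation --- producing, via Dieudonn\'e modules or Serre-Tate coordinates, a $g(g-1)/2$-dimensional family of $p$-rank zero deformations of a fixed superspecial point, and carefully ruling out any unexpected excess components. Oort's completeness, by contrast, becomes essentially a one-line consequence once semistable reduction and Raynaud uniformization are in hand: a toric piece of rank $r$ in the special fiber automatically produces \'etale $p$-torsion of rank $r$ in the generic fiber, so the condition $p$-rank zero forbids any nontrivial toric degeneration.
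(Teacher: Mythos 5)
The paper itself offers no proof of this statement --- it is a survey item, cited to Koblitz for the codimension and to Oort for completeness --- so your argument can only be judged on its own terms. The completeness half is correct and is the standard modern argument: after semistable reduction, Raynaud uniformization shows that a torus part of rank $r$ in the special fiber forces the generic fiber to have at least $p^{r}$ geometric $p$-torsion points (the surjection $A[p](\bar K)\to M/pM$ uses only divisibility of $\tilde{\mathcal G}_\eta(\bar K)$), so $p$-rank zero forbids any toric degeneration, the semiabelian model is an abelian scheme, and lower semicontinuity of the $p$-rank puts the special fiber back in $V_0$. That half I would accept as written.

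The codimension half has a genuine gap, in two places. First, the upper bound: the Hasse--Witt operator is $p$-semilinear, so $V_0$ is the vanishing locus of the $g$-fold twisted composite $H^{(p^{g-1})}\circ\cdots\circ H^{(p)}\circ H$, not the nilpotence locus of a single linear endomorphism of a rank-$g$ bundle; there are no characteristic-polynomial coefficients to serve as $g$ natural equations, and the naive degeneracy-locus count for the composite gives only $\operatorname{codim}\le g^2$. Koblitz's actual argument is inductive: one shows that the $p$-rank $\le f$ locus is cut out, locally on the $p$-rank $\le f+1$ locus, by a single nonvanishing equation, yielding $\operatorname{codim}V_f\le g-f$ one step at a time. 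Second, and more seriously, the lower bound: a Serre--Tate/Dieudonn\'e computation at one superspecial point only controls the components of $V_0$ passing through that point; you name the problem of ``excess components'' as the main obstacle but supply no mechanism to exclude them, and that exclusion is precisely where the content lies. The cleanest repair uses the completeness you have already proved together with van der Geer's relation $\lambda_1^{\frac{g(g-1)}{2}+1}=0$ (valid in any characteristic) and the fact that $\lambda_1$ is ample away from the Satake boundary: any complete subvariety of $\A_g$ then has dimension at most $\frac{g(g-1)}{2}$, so every component of the complete variety $V_0$ has codimension at least $g$, which combined with the corrected upper bound gives purity of codimension exactly $g$ on all components at once.
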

\begin{df}
This observation is in a sense a byproduct of the study of the
powerful {\it Ekedahl-Oort stratification} \cite{oort1} of $\A_g$.
What one does is consider the group scheme $A[p]$ of points
of order $p$ on a ppav, with the symplectic pairing on it induced by the principal
polarization on $A$. One then defines the {\it Ekedahl-Oort stratum} as the
locus of ppavs $A$ for which the group scheme $A[p]$ is of a given
type, up to an isomorphism. It can be shown with a lot of work that there are finitely
many strata, each of which is quasi-affine, so that this
stratification gives a cell decomposition of $\A_g$.
\end{df}
Let $k$ be an algebraically closed field with ${\rm char\,} k= p$.
We define the {\it $p$-rank} of a ppav $A$ to be $f:=\log_p\sharp
A[p](k)$. Let $V_f$ be the locus of abelian varieties of $p$-rank at
most $f$.
\begin{thm}[van der Geer \cite{vdgeer1}]
The cycle class of the locus of ppavs of $p$-rank $\le f$ is
$$
 [V_f]=(p-1)(p^2-1)\cdots(p^{g-f}-1)\ll_{g-f},
$$
so in finite characteristic the Hodge classes are effectively
represented by subvarieties (not complete for $f>0$) of $\A_g$.
\end{thm}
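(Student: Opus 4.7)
My plan is to realize the $p$-rank stratum $V_f$ as a determinantal locus for a natural morphism of vector bundles on $\A_g$, and then compute its cycle class by combining Thom-Porteous-type reasoning with the tautological-ring relation from the earlier theorem of van der Geer and Esnault-Viehweg. The key geometric input is the Verschiebung: in characteristic $p$ the absolute Frobenius $F$ on $\A_g$ induces a canonical homomorphism of Hodge bundles
$$
V\colon \E \longrightarrow F^{*}\E,
$$
whose fiberwise rank at a geometric point $[A]$ equals the $p$-rank of $A$. Thus, set-theoretically, $V_f$ is the rank-$\le f$ locus of $V$; by the dimension count of Koblitz and Oort it has the expected codimension $g-f$.

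The codimension-one case follows at once. Since the absolute Frobenius multiplies Chern classes by powers of $p$ (by the splitting principle and $F^{*}L = L^{\otimes p}$), we get $c_i(F^{*}\E) = p^{i}\ll_i$. In particular $\det V$ is a section of $\det(F^{*}\E)\otimes\det(\E)^{-1} = L^{\otimes(p-1)}$, giving $[V_{g-1}] = (p-1)\ll_1$, which is exactly the $f=g-1$ case of the formula.

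The general case I would prove by induction on $g-f$. The naive Thom-Porteous formula would predict codimension $(g-f)^{2}$ for the rank-$\le f$ locus of a generic morphism between rank-$g$ bundles; the gap between this and the actual codimension $g-f$ reflects the fact that $V$ is $p$-linear and thus very far from generic. To handle this, on the open part of $V_{f+1}$ where $V$ has constant rank $f+1$, both $\ker V$ and $\mathrm{coker}\, V$ are vector bundles of rank $g-f-1$, and a further iterate of Frobenius/Verschiebung induces a secondary morphism between them whose vanishing cuts out $V_{f}\subset V_{f+1}$ as a determinantal divisor. Computing $c_1$ of $\det(\mathrm{coker}\,V)\otimes\det(\ker V)^{-1}$ on this stratum and unwinding the Frobenius twist contributes the next factor $(p^{g-f}-1)\ll_1$ restricted to $V_{f+1}$.

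The final step is to assemble these iterated relative $c_1$ classes into a multiple of $\ll_{g-f}$ on $\A_g$ itself. Here I would push forward through the chain of embeddings $V_{f}\hookrightarrow V_{f+1}\hookrightarrow \cdots \hookrightarrow \A_g$ and use the tautological-ring relation
$$
(1+\ll_1+\cdots+\ll_g)(1-\ll_1+\cdots+(-1)^g\ll_g)=1
$$
to recognize the accumulated product of $c_1$'s on successive strata as $(p-1)(p^{2}-1)\cdots(p^{g-f}-1)\ll_{g-f}$. The main obstacle I anticipate is exactly this conversion from iterated relative classes on nested strata to a globally defined class on $\A_g$: one must track excess-intersection contributions at each step and check that they vanish or assemble into the expected integer coefficient. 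A cleaner alternative, if one can justify it rigorously, is to apply Fulton's refined Porteous formula directly to $V$ and use the $p$-linearity to isolate, from the naive codimension-$(g-f)^{2}$ class, precisely the codimension-$(g-f)$ component on which $V_f$ is supported; that component should equal $(p-1)(p^{2}-1)\cdots(p^{g-f}-1)\ll_{g-f}$, yielding the theorem.
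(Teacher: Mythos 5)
The survey itself offers no proof of this statement --- it is quoted from van der Geer \cite{vdgeer1} --- so your proposal has to stand on its own, and as written it has a genuine gap at the very first step. The fiberwise rank of the Hasse--Witt/Verschiebung map $V:\E\to F^*\E$ at a point $[A]$ is \emph{not} the $p$-rank of $A$: it equals $g-a(A)$, where $a(A)=\dim\operatorname{Hom}(\alpha_p,A)$ is the $a$-number. The $p$-rank is the \emph{stable} rank of the iterated Frobenius (the rank of $F^{g}$ on $H^1(A,\mathcal O_A)$), which is in general strictly smaller than the rank of a single application. Consequently the rank-$\le f$ degeneracy locus of $V$ is the stratum $\lbrace a\ge g-f\rbrace$, of codimension $\tfrac{(g-f)(g-f+1)}{2}$, not $V_f$. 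The two loci coincide only when $g-f=1$, which is why your Hasse-invariant computation of $[V_{g-1}]=(p-1)\ll_1$ is correct but does not generalize as stated. The same confusion undermines the inductive step: at a generic point of $V_{f+1}$ the $a$-number is $1$, so $\ker V$ and $\operatorname{coker}V$ there have rank $1$, not $g-f-1$; the bundles one actually needs are the stable kernel and image of the iterated Frobenius, and one must prove they are locally free along the stratum before any determinantal argument can begin.

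There is a second, independent problem with the final assembly. Even granting a chain of divisorial degeneracy conditions on the nested strata, accumulating first Chern classes of line bundles produces (Frobenius-twisted) powers of $\ll_1$ times the class of $V_{f+1}$, and the relation $(1+\ll_1+\cdots+\ll_g)(1-\ll_1+\cdots+(-1)^g\ll_g)=1$ only expresses the even $\ll_i$ in terms of the odd ones --- it cannot convert such a product into the single class $\ll_{g-f}$. In van der Geer's argument the class $\ll_{g-f}$ arises in one stroke as (essentially) the top Chern class of a rank-$(g-f)$ bundle built from the iterated Frobenius --- a genuine Porteous/Giambelli computation, carried out on a flag-type cover of the stratum where the relevant kernels are filtered by line bundles, followed by a pushforward --- rather than as an iterated product of divisor classes. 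To repair your proposal you would need to (i) replace $V$ by the composite $V\circ V^{(p)}\circ\cdots\circ V^{(p^{g-f-1})}$ so that its degeneracy locus really is $V_f$, (ii) justify local freeness of the stable image on each stratum, and (iii) perform the excess-intersection bookkeeping for a $p$-linear map whose degeneracy locus has codimension $g-f$ rather than the generic $(g-f)^2$; each of these is a substantive piece of the original proof that the sketch currently takes for granted.
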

It turns out that in fact all cycle classes of the Ekedahl-Oort
stratification lie in the tautological ring and can be computed
explicitly.

There also exists another stratification of $\A_g$ in finite
characteristic, by Newton polygon --- see \cite{oort3} for recent
work on it. There is a multitude of other constructions, results,
and questions concerning $\A_g$ in finite characteristic, which we
do not discuss here. The forthcoming book \cite{vdgbook} will
be a great source of information on moduli spaces of abelian
varieties in finite characteristics.


\end{document}